\DeclareMathOperator{\rank}{\mathrm{rank}}
\newtheorem{teo}{Theorem}[section]
\newtheorem{defi}{Definition}[section]
\newtheorem{obs}{Remark}[section]
\newtheorem{lema}{Lemma}[section]
\newtheorem{prop}{Proposition}[section]
\newtheorem{ex}{Example}[section]
\newcommand{\isEquivTo}[1]{\underset{#1}{\sim}}
\newcommand{\bnu}{\bm{\nu}}
\newcommand{\cD}{\mathcal{D}}
\newcommand{\bD}{\mathbf{D}}
\newcommand{\cT}{\mathcal{T}}
\newcommand{\bxi}{\bm{\xi}}
\newcommand{\by}{\mathbf{y}}
\newcommand{\bx}{\mathbf{x}}
\newcommand{\bn}{\mathbf{n}}
\newcommand{\w}{\mathbf{w}}
\newcommand{\vv}{\mathbf{v}}
\newcommand{\bI}{\mathbf{I}}
\newcommand{\bII}{\mathbf{II}}
\newcommand{\bv}{\mathbf{v}}
\newcommand{\bomega}{\mathbf{\Omega}}
\newcommand{\bLambda}{\mathbf{\Lambda}}
\newcommand{\cal}{\mathcal}
\newcommand{\R}{\mathbb{R}}
\newcommand{\bW}{\mathbf{W}}
\newcommand{\cR}{\mathcal{R}}
\title{Equiaffine Structure on frontals}
\author{Igor Chagas Santos}
\thanks{\text{The author was supported by CAPES Proc. PROEX-11365975/D}}
\thanks{\text{\textit{Present address}: Instituto de Matemática e Estatística, Universidade Federal da Bahia, Rua Br}}
\thanks{de Jeremoabo, 40170-115 Salvador, BA, Brazil}
\address{Instituto de Ciências Matemáticas e de Computação, Universidade de
	São Paulo, Av. Trabalhador São-carlense, 400, São Carlos, SP
	13566-590, Brazil.}
\email{igor.chs34@gmail.com}
\begin{document}

\begin{abstract}
	In this paper, we generalize the idea of equiaffine structure to the case of frontals and we define the Blaschke vector field of a frontal. We also investigate some necessary and sufficient conditions that a frontal needs to satisfy to have a Blaschke vector field and provide some examples. Finally, taking the theory developed here into account we present a fundamental theorem, which is a version for frontals of the fundamental theorem of affine differential geometry.\\
	\textbf{Keywords}: Frontals; Blaschke vector field; Equiaffine structure.\\
	\textbf{MSC}: 53A15, 53A05, 57R45.	
\end{abstract}
{\let\newpage\relax\maketitle}

\section{Introduction}
The systematic treatment of curves and surfaces in unimodular affine space is a main subject in affine differential geometry. This study began around 1916 and mathematicians like Blaschke, Berwald, Pick and Radon were pioneers in the development of this area (see \cite{nomizuhistory} for historical notes). In more recent decades, some papers are dedicated to the study of the geometry of affine immersions and structures related to affine differential geometry, as line congruences, for instance (see \cite{barajas2020lines}, \cite{brucetari}, \cite{esferas1}, \cite{davis}, \cite{nossoartigo}, \cite{Nomizu1}, \cite{luis1}, \cite{luis2}).

The main goal in classical affine differential geometry is the study of properties of surfaces in $3$-dimensional affine spaces, which are invariant under equiaffine transformations. In this sense, given a regular surface $M$ and $\bxi$ an equiaffine transversal vector field on $M$, we obtain an equiaffine structure on $M$ induced by $\bxi$. By equiaffine structure, we mean a torsion free affine connection, a parallel volume element on $M$ and a shape operator associated to $\bxi$. Taking this into account, an important result is the fundamental theorem of affine differential geometry, which asserts that given some integrability conditions there are a surface $M$ and an equiaffine transversal vector field $\bxi$ with a given equiaffine structure (see section 4.9 in \cite{Simon} or chapter 2, section 8 in \cite{nomizu1994affinen}).

When studying a regular non-parabolic surface $M$ from the affine viewpoint, an important object is the associated Blaschke (or affine normal) vector field, which is an invariant under equiaffine transformations. A surface $M$ equipped with the equiaffine structure determined by its Blaschke vector field is called a Blaschke surface. With the structure given by this vector field we define, for instance, proper and improper affine spheres, which are special types of Blaschke surfaces that have been studied in many papers (see \cite{esferas1}, \cite{esferas3}, \cite{esferas2}, \cite{esferas4}, \cite{esferas5}, \cite{esferas6}).

The study of surfaces with singularities from the affine differential geometry viewpoint has not been much explored, mainly due to the difficulties which arise at singular points. As we want to explore this viewpoint, in this paper we work with a special class of singular surfaces called frontals. If we take a surface $S$ and we think of light as particles which propagate at unit speed in the direction of the normals of $S$, then at a given time $t$, this particles provide a new surface $S^\prime$. We call $S^\prime$ the wave front of $S$. The notion of frontals arises as a generalization of wave fronts, when considering the case of hypersurfaces. In
recent years, many papers are dedicated to the study of these singular surfaces (see \cite{fukunaga2019framed}, \cite{ishikawa2018singularities}, \cite{ishikawa2}, \cite{luciana1}, \cite{alexandro2}, \cite{alexandro2019fundamental}, \cite{tito3}, \cite{saji1}, \cite{saji2009geometry}). Other references can be found in the survey paper \cite{ishikawa2018singularities}.

Our goal is to extend the study of properties invariant under equiaffine transformations to the case of frontals. We first define the notion of equiaffine transversal vector field to a frontal, using the limiting tangent planes. With this, we provide a definition of affine non-degenerate frontal and define the Blaschke vector field of such a frontal as the smooth extension of the Blaschke vector field defined on its regular part. In theorem \ref{teoremacaracterizacaoBlaschke} some necessary and sufficient conditions that a frontal needs to satisfy to have a Blaschke vector field are shown.

In remark \ref{classesex} we provide some classes of frontals that admit a Blaschke vector field. When studying the class of wave fronts of rank $1$ with extendable Gaussian curvature it turns out that from this class we get a subclass of \textit{frontal improper affine spheres}, i.e. frontals with constant Blaschke vector field. This class seems to be related to improper affine maps, that is a class of improper affine spheres with singularities introduced in \cite{Martinez} for convex surfaces. It is worth observing that improper affine spheres with singularities is a topic of interest in differential geometry, see \cite{esferas1}, \cite{ishikawaimproper}, \cite{milan2013} and \cite{Nakajo}, for instance.

Finally, taking into account the equiaffine theory developed here, we obtain in theorem \ref{teofundamental} a version for
frontals of the fundamental theorem of affine differential geometry for regular surfaces, in a
way that its proof relies on assuming the integrability conditions in the regular case. To prove this theorem the same approach used in \cite{alexandro2019fundamental} is applied, but here we are working not only with the unit normal vector field, but with any equiaffine vector field transversal to a frontal.

This paper is organized as follows. In section \ref{sec2} some well known results from the equiaffine theory for regular surfaces are shown. In section \ref{sec3} we review some content about frontals and investigate some classes which play an important role in the next sections. In section \ref{sec4}, we generalize the idea of equiaffine structure on frontals. Since we have the notion of equiaffine transversal vector field to a frontal, in section \ref{sec5} we define the Blaschke vector field of a frontal and we characterize frontals which have Blaschke vector field. Finally, in section \ref{sec6} we provide a fundamental theorem for the theory developed in the previous sections.

\subsubsection*{Acknowledgements}
This work is part of author's Ph.D thesis, supervised by Maria Aparecida Soares Ruas and Débora Lopes, whom the author thanks for all the support and constant motivation. The author thanks Tito Medina for his support and also for being constantly available for helpful discussions. The author is also grateful to Raúl Oset and the Singularity Group at the Universitat de València for their hospitality and useful comments on this work during author's stay there.
\subsection*{Statements and Declarations}
\subsubsection*{Availability of data}
Data sharing not applicable to this article as no datasets were generated or analyzed during the current study.

\subsubsection*{Conflict of interest} The author has no conflicts of interest to declare that are relevant to the
content of this study

\section{Fixing notations, definitions and some basic results}\label{sec2}
We denote by $U$ an open subset of $\R^2$, where $u = (u_{1}, u_{2}) \in U$ and for a given smooth map $f: U \to \R^n$ the map $Df: U \to M_{n\times 2}(\R)$ is the differential of $f$. 
\subsection{Equiaffine structure for non-parabolic regular surfaces}
Let $\R^3$ be a three-dimensional affine space with volume element given by $\omega(\w_{1}, \w_{2}, \w_{3}) = \det(\w_{1}, \w_{2}, \w_{3})$, for $\w_{1}, \w_{2}, \w_{3} \in \R^3$. Let $D$ be the standard flat connection in $\R^3$ and $\bx: U \rightarrow \R^3$ a regular surface with $\bx(U) = M$ and $\bxi: U \rightarrow \R^3 \setminus \lbrace \bm{0} \rbrace$ a vector field which is transversal to $M$. Then,
\begin{align*}
T_{p}\R^3 = T_{p}M \oplus \langle \bxi(u) \rangle_{\R},
\end{align*}
where $\bx(u) = p$, for any $u \in U$. If $X$ and $Y$ are vector fields on $M$, then we have the decomposition
\begin{align}\label{affinefundam}
D_{X}Y = \nabla_{X}Y + \mathbf{c}(X,Y)\bxi,
\end{align}
where $\nabla$ is the \textit{induced affine connection} and $\mathbf{c}$ is the \textit{affine fundamental form} induced by $\bxi$. We say that $M$ is \textit{non-degenerate} if $\mathbf{c}$ is non-degenerate which is equivalent to say that $M$ is a non-parabolic surface (see chapter 3 in \cite{nomizu1994affine}). Furthermore, we have
\begin{align*}
D_{X}\bxi = -S(X) + \tau(X)\bxi,
\end{align*}
where $S$ is the \textit{shape operator} and $\tau$ is the \textit{transversal connection form}. We say that $\bxi$ is an \textit{equiaffine transversal vector field} if $\tau =0$. The \textit{induced volume element} $\theta $ is defined as follows
\begin{align*}
\theta(X, Y) := \omega(X,Y, \bxi),
\end{align*}
where $X$ and $Y$ are tangent to $M$. 

\begin{defi}\normalfont\label{dettheta}
	Let $\bxi$ be an arbitrary vector field which is transversal to $M$, $\mathbf{c}$ the affine fundamental form and $\theta$ the induced volume element. We define $\det_{\theta}\mathbf{c}$ as $\det(\mathbf{c}_{ij})$, where $\mathbf{c}_{ij} = \mathbf{c}(X_{i}, X_{j})$ and $\lbrace X_{1}, X_{2} \rbrace$ is a unimodular basis for $\theta$, that is, $\theta(X_{1}, X_{2}) = 1$.  
\end{defi}

\begin{obs}\normalfont
	Since the determinant of $\begin{pmatrix}
		\mathbf{c}_{ij}
	\end{pmatrix}$ is independent of the choice of unimodular basis $\lbrace X_{1}, X_{2} \rbrace$,  $\det_{\theta}\mathbf{c}$ is well defined.
\end{obs}

The next proposition relates the induced volume element $\theta$ and the definition of equiaffine transversal vector field.
\begin{prop}{\rm(\cite{nomizu1994affine}, Proposition 1.4)}\label{prop1.4}
	We have
	\begin{align}
	\nabla_{X}\theta = \tau(X)\theta,\; \text{for all $X \in T_{p}M$}.
	\end{align}	
	Consequently, the following two conditions are equivalent:
	\begin{enumerate}
		\item [(a)]$\nabla\theta = 0$.
		\item [(b)] $\tau = 0$.
	\end{enumerate}
\end{prop}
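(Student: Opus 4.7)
The plan is to compute $(\nabla_X\theta)(Y,Z)$ directly from the definition of the induced covariant derivative of a $(0,2)$-tensor, and show that all cross terms collapse due to the flatness of $\omega$ and simple linear-dependence observations. The equivalence of (a) and (b) will then be immediate because $\theta$ is a nowhere-vanishing area form on $M$.

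First I would recall that since $D$ is the standard flat connection in $\R^3$ and $\omega = \det$, the volume form $\omega$ is parallel with respect to $D$, so for any smooth vector fields $V_1,V_2,V_3$ on $\R^3$ and any $X\in T_pM$ one has
\begin{equation*}
X\bigl(\omega(V_1,V_2,V_3)\bigr) = \omega(D_XV_1,V_2,V_3) + \omega(V_1,D_XV_2,V_3) + \omega(V_1,V_2,D_XV_3).
\end{equation*}
Applying this with $V_1=Y$, $V_2=Z$ tangent to $M$ and $V_3=\bxi$, and then plugging in the Gauss-type decomposition $D_XY=\nabla_XY+\mathbf{c}(X,Y)\bxi$ together with the Weingarten-type formula $D_X\bxi = -S(X)+\tau(X)\bxi$, the terms containing $\bxi$ twice vanish (repeated argument in $\omega$), i.e. $\omega(\bxi,Z,\bxi)=\omega(Y,\bxi,\bxi)=0$. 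Moreover, $S(X)\in T_pM$, so $Y,Z,S(X)$ are three vectors in the $2$-dimensional plane $T_pM$, hence $\omega(Y,Z,S(X))=0$.

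After these cancellations one is left with
\begin{equation*}
X\bigl(\theta(Y,Z)\bigr) = \theta(\nabla_XY,Z) + \theta(Y,\nabla_XZ) + \tau(X)\,\theta(Y,Z),
\end{equation*}
which by definition of $\nabla_X\theta$ is exactly $(\nabla_X\theta)(Y,Z)=\tau(X)\,\theta(Y,Z)$, proving the displayed identity. For the equivalence, (b)$\Rightarrow$(a) is immediate from the formula. Conversely, if $\nabla\theta=0$ then $\tau(X)\theta\equiv 0$ for every $X$; since $\bxi$ is transversal to $M$, for any linearly independent tangent vectors $Y,Z$ the value $\theta(Y,Z)=\omega(Y,Z,\bxi)$ is nonzero, so $\theta$ does not vanish and hence $\tau(X)=0$ for all $X$.

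I do not foresee a real obstacle: the argument is bookkeeping on the Gauss and Weingarten decompositions together with the parallelism of the ambient determinant form. The only point that must be stated carefully is why $\omega(Y,Z,S(X))=0$, which is simply the dimension count $\dim T_pM=2$; this is the step I would emphasize explicitly so the reader sees why no shape-operator term survives in the final expression.
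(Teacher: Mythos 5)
Your argument is correct and is exactly the standard proof of this fact: the paper itself gives no proof, quoting the result directly from Nomizu--Sasaki (Proposition 1.4 there), and the computation in that reference is the same bookkeeping you carry out, namely $D$-parallelism of $\omega$ plus the Gauss and Weingarten decompositions, with the shape-operator term dying by the dimension count $\dim T_pM=2$ and the equivalence following because $\theta$ is nowhere vanishing by transversality of $\bxi$. Nothing is missing.
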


We say that $M$ has a \textit{parallel volume element} if there is a volume element $\theta$ on $M$ such that $\nabla\theta=0$, where
\begin{align*}
\nabla_{X}\theta(X_{1}, X_{2}) = X(\omega(X_{1}, X_{2})) - \theta(X_{1}, \nabla_{X}X_{2}) - \theta(\nabla_{X}X_{1}, X_{2})
\end{align*}
for $X, X_{1}, X_{2}$ vector fields on $M$. Then, it follows from proposition \ref{prop1.4} that a vector field $\bxi$, transversal to a non-parabolic surface, is equiaffine if and only if the induced volume element is parallel.

Given a non-parabolic surface $\bx: U \to \R^3$ the affine fundamental form $\mathbf{c}$ is non-degenerate, then it can be treated as a non-degenerate metric (not necessarily positive-definite) on $\bx(U) = M$.
\begin{defi}\label{defBlaschkeusual}\normalfont
	Let $\bx: U \rightarrow \R^3$ be a non-parabolic surface. A transversal vector field $\bxi: U \rightarrow \R^3 \setminus \lbrace \bm{0} \rbrace$ is \textit{the Blaschke normal vector field} of $\bx(U) = M$ if the following conditions hold:
	\begin{enumerate}
		\item [(a)] $\bxi$ is equiaffine.
		\item [(b)] The volume element $\theta$ induced by $\bxi$ coincides with the volume element $\omega_{\mathbf{c}}$ of the non-degenerate metric $\mathbf{c}$.
	\end{enumerate}
	
\end{defi}
From now on, we refer to the Blaschke vector field given in definition \ref{defBlaschkeusual} as the \textit{usual Blaschke vector field}.

\section{Frontals}\label{sec3}
 A smooth map $\bx: U \to \R^3$ is said to be a \textit{frontal} if, for all $q \in U$ there is a vector field $\bn: U_{q} \to \R^3$ where $q \in U_{q}$ is an open subset of $U$, such that $\lVert \bn \rVert = 1$ and $\langle \bx_{u_i}(u), \bn(u) \rangle = 0$, for all $u \in U_{q}$, $i=1,2$. This vector field is said to be a unit normal vector field along $\bx$. We say that a frontal $\bx$ is a \textit{wave front} if the map $(\bx, \bn): U \to \R^3 \times \mathbb{S}^2$ is an immersion for all $q \in U$. Here, we consider mainly \textit{proper frontals}, that is, frontals $\bx$ for which the singular set $\Sigma(\bx) = \lbrace q \in U: \text{$\bx$ is not immersive at $q$} \rbrace$ has empty interior. This is equivalent to say that $U \setminus \Sigma(\bx)$ is an open dense set in $U$.

\begin{defi} \label{def21}\normalfont
	We call a \textit{moving basis} a smooth map $\bomega: U \rightarrow \cal{M}_{3\times 2}(\R)$ in which the columns $\w_{1}, \w_{2}: U \rightarrow \R^3$ of the matrix $\bomega =  \begin{pmatrix}
	\w_{1} & \w_{2}
	\end{pmatrix} $ are linearly independent vector fields.
\end{defi}

\begin{defi} \label{def22}\normalfont
	We call a \textit{tangent moving basis} (tmb) of $\bx$ a moving basis $\bomega = \left( \w_{1}, \w_{2} \right)$, such that $\bx_{u_{1}}, \bx_{u_{2}} \in \langle \w_{1}, \w_{2} \rangle_{\R}$, where $\langle \;, \rangle_{\R}$ denotes the linear span $\R$-vector space.
\end{defi}
Given a tmb $\bomega = \begin{pmatrix}
\w_{1} & \w_{2}
\end{pmatrix}$ we denote by $T_{\Omega}(q)$ the plane generated by $\w_{1}(q)$ and $\w_{2}(q)$, for all $q \in U$. Note that given two tangent moving basis $\bomega$ and $\widetilde{\bomega}$ of a proper frontal, we get $T_{\Omega}  = T_{\widetilde{\Omega}}$. The next proposition provides a characterization of frontals in terms of tangent moving basis.
\begin{prop} \rm{(\cite{alexandro2019fundamental}, Proposition 3.2)}\label{propdecomp}
	Let $\bx: U \rightarrow \R^3$ be a smooth map with $U \subset \R^2$ an open set. Then,
	$\bx$ is a frontal if and only if, for all $q \in U$, there are smooth maps  $\bomega: U_{q} \rightarrow \cal{M}_{3\times 2}(\R)$ and $\bLambda: U_{q} \rightarrow \cal{M}_{2\times 2}(\R)$ with $\text{rank}(\bomega)= 2$ and $U_{q} \subset U$ an open neighborhood of $q$, such
	that $D\bx(\tilde{q}) = \bomega \bLambda_{\Omega}^T$, for all $\tilde{q} \in U_{q}$.
\end{prop}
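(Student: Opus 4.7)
The plan is to prove both implications separately, with the backward direction being essentially a matrix-algebra observation and the forward direction requiring a local smooth frame for $\bn^\perp$.

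For the ($\Leftarrow$) direction, assume $D\bx(\tilde{q}) = \bomega\, \bLambda^{T}$ holds on a neighborhood $U_q$. The columns of $D\bx$ are $\bx_{u_1}, \bx_{u_2}$, and by expanding the matrix product, each $\bx_{u_i}$ is a linear combination of $\w_1$ and $\w_2$, so $\bx_{u_i} \in \langle \w_1, \w_2 \rangle_\R$. Since $\bomega$ has rank $2$ everywhere on $U_q$, the vector fields $\w_1, \w_2$ are pointwise linearly independent, hence $\w_1 \times \w_2$ is smooth and nowhere-vanishing. I would then set
\[
\bn := \frac{\w_1 \times \w_2}{\lVert \w_1 \times \w_2 \rVert},
\]
which is smooth, unit, and orthogonal to both $\w_1$ and $\w_2$, thus to each $\bx_{u_i}$. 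This shows $\bx$ is a frontal.

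For the ($\Rightarrow$) direction, fix $q \in U$ and let $\bn: U_q \to \R^3$ be a smooth unit normal along $\bx$ on a neighborhood of $q$. The strategy is to write down, explicitly and smoothly, a local frame of the orthogonal distribution $\bn^\perp$. Writing $\bn = (n_1, n_2, n_3)$, at least one coordinate does not vanish at $q$; without loss of generality $n_3(q) \neq 0$, and by continuity $n_3 \neq 0$ on a possibly smaller neighborhood, which I still call $U_q$. Define
\[
\w_1 := (n_3, 0, -n_1), \qquad \w_2 := (0, n_3, -n_2).
\]
A direct computation gives $\w_1 \times \w_2 = n_3\, \bn$, so $\w_1, \w_2$ are linearly independent and $\bomega := (\w_1 \ \w_2)$ has rank $2$. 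Both are orthogonal to $\bn$ by construction. (If instead $n_1(q)\neq 0$ or $n_2(q) \neq 0$ one uses analogous formulas, so the construction is always possible locally.)

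To conclude, since each $\bx_{u_i}$ is orthogonal to $\bn$, it lies in the span of $\w_1, \w_2$, so there exist unique scalar functions $\lambda_{ij}$ with $\bx_{u_i} = \lambda_{i1}\w_1 + \lambda_{i2}\w_2$. These coefficients depend smoothly on $u$ because they are obtained by solving the $3\times 3$ linear system $(\w_1 \ \w_2 \ \bn)(\lambda_{i1}, \lambda_{i2}, 0)^T = \bx_{u_i}$ with smooth, everywhere-invertible coefficient matrix. Assembling $\bLambda := (\lambda_{ij})$, the identity $D\bx = \bomega\, \bLambda^T$ holds by construction, completing the proof. The main obstacle, which is really only a mild one, is just the bookkeeping to ensure that the construction of the frame $(\w_1, \w_2)$ is carried out in a chart-independent, smooth way at every point of $U$; the three cases according to which coordinate of $\bn$ is nonzero handle this, and no deeper difficulty arises.
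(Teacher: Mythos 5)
Your proof is correct and complete: the backward direction via $\bn = \w_1\times\w_2/\lVert \w_1\times\w_2\rVert$ and the forward direction via the explicit frame $\w_1=(n_3,0,-n_1)$, $\w_2=(0,n_3,-n_2)$ (with $\w_1\times\w_2=n_3\bn$ and $\det(\w_1\ \w_2\ \bn)=n_3\neq 0$ guaranteeing smoothness of the coefficients $\lambda_{ij}$) both go through. The paper itself does not reproduce a proof of this proposition, citing it from \cite{alexandro2019fundamental}; your argument is essentially the standard one given there, so there is nothing further to compare.
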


\begin{obs}\normalfont
	It follows from \ref{propdecomp} that $\bxi: U \to \R^3$ is a frontal if, and only if, there is a tangent moving basis of $\bx$.
\end{obs}

Since a tangent moving basis exist locally and we want to describe local properties, from now on we suppose that for a given frontal we have a global tangent moving basis. Then, if a frontal $\bx$ satisfies $D\bx = \bomega \bLambda^{T}$, where $\bomega$ is a tangent moving basis, we have that $\Sigma(\bx) = \lambda_{\Omega}^{-1}(0)$, where $\lambda_{\Omega} := \det \bLambda$.

Let $\bx: U \rightarrow \R^3$ be a frontal, $\bomega = \begin{pmatrix}
\w_{1} & \w_{2}
\end{pmatrix}$ a tmb of $\bx$ and denote by $\bn = \dfrac{\w_{1}\times \w_{2}}{\lVert \w_{1} \times \w_{2} \rVert}$ the unit normal vector field induced by $\bomega$. We set the matrices
\begin{align}
\bI_{\Omega} &:= \bomega^{T}\bomega = \begin{pmatrix}
E_{\Omega} & F_{\Omega}\\
F_{\Omega} & G_{\Omega}
\end{pmatrix} = \begin{pmatrix}
\langle \w_{1}, \w_{1} \rangle & \langle \w_{1}, \w_{2} \rangle\\
\langle \w_{2}, \w_{1} \rangle & \langle \w_{2}, \w_{2} \rangle
\end{pmatrix}\nonumber,\\
\bII_{\Omega} &:= -\bomega^{T}D\bn = \begin{pmatrix}
e_{\Omega} & f_{1\Omega}\\
f_{2\Omega} & g_{\Omega}
\end{pmatrix} = \begin{pmatrix}
-\langle \w_{1}, \bn_{u_1} \rangle & -\langle \w_{1}, \bn_{u_2} \rangle\\
-\langle \w_{2}, \bn_{u_1} \rangle & -\langle \w_{2}, \bn_{u_2} \rangle
\end{pmatrix}\nonumber,\\
\bm{\mu}_{\Omega} &:= -\bII_{\Omega}^{T}\bI_{\Omega}^{-1}\nonumber,\\
\cT_{1}&:=(\bomega_{u_1}^T\bomega)\bI_\Omega^{-1}=\begin{pmatrix}
\cT_{11}^{1} & \cT_{11}^{2}\\
\cT_{21}^{1} & \cT_{21}^{1}
\end{pmatrix},\label{T1O} 
\\
\cT_{2}&:=(\bomega_{u_2}^T\bomega)\bI_\Omega^{-1}=\begin{pmatrix}
\cT_{12}^{1} & \cT_{12}^{2}\\
\cT_{22}^{1} & \cT_{22}^{1}
\end{pmatrix}\label{T2O}.
\end{align}
Given a frontal $\bx: U \to \R^3$ and a tmb $\bomega = \begin{pmatrix}
\w_{1} & \w_{2}
\end{pmatrix}$ of $\bx$, it follows that  $\langle \w_{1}, \bn \rangle = \langle \w_{2}, \bn \rangle = 0$. By taking partial derivatives of these equalities, we rewrite
\begin{align}\label{matrizII}
\mathbf{II}_{\Omega} = \begin{pmatrix}
\langle (\w_{1})_{u_{1}}, \bn \rangle & \langle (\w_{1})_{u_{2}}, \bn \rangle \\ 
\langle (\w_{2})_{u_{1}}, \bn \rangle & \langle (\w_{2})_{u_{2}}, \bn \rangle
\end{pmatrix}. \end{align}
\begin{defi} \label{curvaturerelativa}\normalfont
	Let $\bx: U \rightarrow \R^3$ be a frontal and $\bomega$ a tangent moving basis of $\bx$. We define the $\bomega$-\textit{relative curvature} $K_{\Omega}:= \det(\bm{\mu_{\Omega}})$.
\end{defi}

Given a frontal $\bx: U \to \R^3$ with a global unit normal vector field $\bn: U \to \R^3$, we can also consider the matrices
\begin{align}\label{segform}
\bI &:= D\bx^{T}D\bx = \begin{pmatrix}
E & F\\
F & G
\end{pmatrix} = \begin{pmatrix}
\langle \bx_{u_1}, \bx_{u_1} \rangle & \langle \bx_{u_1}, \bx_{u_2} \rangle\\
\langle \bx_{u_1}, \bx_{u_1} \rangle & \langle \bx_{u_2}, \bx_{u_2} \rangle
\end{pmatrix}\nonumber,\\
\bII &:= -D\bx^{T}D\bn = \begin{pmatrix}
e & f\\
f & g
\end{pmatrix} = \begin{pmatrix}
-\langle \bx_{u_1}, \bn_{u_1} \rangle & -\langle \bx_{u_1}, \bn_{u_2} \rangle\\
-\langle \bx_{u_2}, \bn_{u_1} \rangle & -\langle \bx_{u_2}, \bn_{u_2} \rangle
\end{pmatrix}.
\end{align}
If we decompose $D\bx = \bomega \bLambda^{T}$, then $\bI = \bLambda \bI_{\Omega} \bLambda^{T}$ and $\bII = \bLambda\bII_{\Omega}$. Also, the classical normal curvature at a regular point $q \in U$ is given by
\begin{align*}
k_{q}(\vartheta) := \dfrac{\vartheta^{T} \bII \vartheta}{\vartheta^{T}\bI \vartheta},
\end{align*}
where $\vartheta \in \R^2 \setminus \lbrace 0 \rbrace$ are the coordinates of a vector in the basis $\begin{pmatrix}
\bx_{u_1} & \bx_{u_2}
\end{pmatrix}$.

Let $\bx: U \rightarrow \R^3$ be a frontal, $\bomega: U \rightarrow M_{3 \times 2}(\R)$ a tmb of $\bx$, where $\bomega = \begin{pmatrix}
	\w_{1} & \w_{2}
\end{pmatrix}$ and $\bn: U \rightarrow \R^3$ the unit normal vector field along $\bx$. For each $q \in U$ we decompose
\begin{align*}
	\R^3 = T_{\Omega}(q) \oplus \langle \bn(q) \rangle_{\R}.
\end{align*}
Using this decomposition we get
\begin{align}\label{decompo2}
	\w_{i_{u_{j}}} =\cT^{1}_{ij} \w_{1} + \cT^{2}_{ij}  \w_{2} + p_{ij}\bn,
\end{align}
where the symbols $\cT_{ij}^{k}$, $i,j,k=1,2$ are those in (\ref{T1O}) and (\ref{T2O}). Note that $p_{ij} = \langle (\w_{i})_{u_{j}}, \bn \rangle$, thus the matrix $\begin{pmatrix}
	p_{ij}
\end{pmatrix}$ coincide with the matrix (\ref{matrizII}).
\begin{obs}\normalfont \label{naodegen}
	If we define a bilinear form $p_{\Omega}(q): T_{\Omega} \times T_{\Omega} \rightarrow \R$, given by $p_{\Omega}(q)({\w_{i},\w_{j}}) = p_{ij} = \langle \w_{i_{u_{j}}}(q), \bn(q) \rangle $, then the matrix of $p_{\Omega}$ relative to the basis $\bomega$ is $\mathbf{II}_{\Omega}$ and  $p_{\Omega}$ is non-degenerate if and only if $\mathbf{II}_{\Omega}$ is non-singular, which is equivalent to say that the $\bomega$-relative curvature $K_{\Omega}$ is non-zero (see definition \ref{curvaturerelativa}).
\end{obs}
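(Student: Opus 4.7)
The plan is to verify the three claims in the remark in sequence, as each reduces to a short computation once the definitions in Section \ref{sec3} are unpacked.

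First, I would check directly that the matrix of $p_\Omega$ in the basis $\bomega = (\w_1, \w_2)$ is $\mathbf{II}_\Omega$. By definition, the $(i,j)$-entry of this matrix is $p_\Omega(\w_i, \w_j) = \langle (\w_i)_{u_j}, \bn \rangle = p_{ij}$, which coincides with the $(i,j)$-entry of $\mathbf{II}_\Omega$ in the rewritten form (\ref{matrizII}), valid because $\langle \w_i, \bn \rangle = 0$ implies $\langle (\w_i)_{u_j}, \bn \rangle = -\langle \w_i, \bn_{u_j} \rangle$. This is bookkeeping and I anticipate no obstacle.

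Second, for the equivalence between non-degeneracy of $p_\Omega$ and non-singularity of $\mathbf{II}_\Omega$, I would invoke the standard linear algebra fact that a bilinear form on a finite-dimensional vector space is non-degenerate if and only if its matrix representation in any basis has non-zero determinant. Concretely, a vector $v \in T_\Omega$ with coordinates $\vartheta$ satisfies $p_\Omega(v, \cdot) \equiv 0$ exactly when $\vartheta^T \mathbf{II}_\Omega = 0$, so the radical of $p_\Omega$ is trivial iff $\mathbf{II}_\Omega$ has trivial left kernel, iff $\det \mathbf{II}_\Omega \neq 0$.

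Finally, to relate this with the $\bomega$-relative curvature I would compute, using that all matrices are $2 \times 2$,
\begin{equation*}
K_\Omega = \det(\bm{\mu}_\Omega) = \det\bigl(-\mathbf{II}_\Omega^T \mathbf{I}_\Omega^{-1}\bigr) = (-1)^2\,\det(\mathbf{II}_\Omega^T)\,\det(\mathbf{I}_\Omega)^{-1} = \frac{\det \mathbf{II}_\Omega}{\det \mathbf{I}_\Omega}.
\end{equation*}
Since $\w_1, \w_2$ are linearly independent (as $\bomega$ is a tangent moving basis in the sense of Definition \ref{def21}), $\mathbf{I}_\Omega = \bomega^T \bomega$ is the Gram matrix of a linearly independent set and is therefore invertible. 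Consequently $K_\Omega \neq 0$ iff $\det \mathbf{II}_\Omega \neq 0$, which combined with the previous step closes the chain of equivalences. The only subtlety worth being careful about is keeping track of transposes and the sign $(-1)^2 = 1$ in the determinant; beyond that, I do not foresee any real obstacle.
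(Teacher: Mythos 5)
Your proposal is correct and follows essentially the same route the paper implicitly relies on: the identification of $(p_{ij})$ with $\mathbf{II}_{\Omega}$ via the rewriting (\ref{matrizII}), the standard equivalence between non-degeneracy and non-singularity of the representing matrix, and the computation $K_{\Omega}=\det(-\mathbf{II}_{\Omega}^{T}\mathbf{I}_{\Omega}^{-1})=\det\mathbf{II}_{\Omega}/\det\mathbf{I}_{\Omega}$ together with the invertibility of the Gram matrix $\mathbf{I}_{\Omega}$. No gaps.
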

\begin{defi}\label{deffronntalnp}\normalfont
	A proper frontal is said to be a \textit{non-parabolic frontal} if for some \text{tmb} $\bomega$ the relative curvature $K_{\Omega}$ never vanishes.
\end{defi}	

\begin{obs}\normalfont
	It follows from corollary 3.23 in \cite{alexandro2019fundamental} that a frontal $\bx: U \rightarrow \R^3$ is a wavefront if and only if, $(K_{\Omega}, H_{\Omega}) \neq \mathbf{0}$ on $\Sigma(\bx)$, for whatever tangent moving base $\bomega$.  Therefore, every non-parabolic frontal is a wavefront.
\end{obs}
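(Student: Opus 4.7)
The claim to establish is the final assertion of the remark: every non-parabolic frontal is a wavefront. The cited criterion (Corollary 3.23 of \cite{alexandro2019fundamental}) says that $\bx$ is a wavefront if and only if $(K_{\Omega},H_{\Omega}) \neq \mathbf{0}$ on $\Sigma(\bx)$ \emph{for every} tmb $\bomega$. By definition, a non-parabolic frontal only guarantees $K_{\Omega}\neq 0$ for \emph{some} tmb, so the main preliminary step is to show that the vanishing (or non-vanishing) of $K_{\Omega}$ does not depend on the choice of tmb; once this is done, the wavefront condition follows for free.

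The plan is to compute how $K_{\Omega}$ transforms under a change $\widetilde{\bomega}=\bomega \mathbf{A}$ with $\mathbf{A}:U\to GL(2,\R)$. From the definitions in (\ref{T1O})--(\ref{T2O}) one gets
\begin{align*}
\widetilde{\bI}_{\Omega}=\mathbf{A}^{T}\bI_{\Omega}\mathbf{A},\qquad \widetilde{\bII}_{\Omega}=\mathbf{A}^{T}\bII_{\Omega},
\end{align*}
(the unit normal is unchanged up to sign, which does not affect $K_{\Omega}$). Hence
\begin{align*}
\widetilde{\bm{\mu}}_{\Omega}=-\widetilde{\bII}_{\Omega}^{T}\widetilde{\bI}_{\Omega}^{-1}=\bm{\mu}_{\Omega}\mathbf{A}^{-T},
\end{align*}
so $\widetilde{K}_{\Omega}=K_{\Omega}/\det(\mathbf{A})$. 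Since $\det(\mathbf{A})$ is never zero, $K_{\Omega}$ vanishes at a point for one tmb if and only if it vanishes there for every tmb.

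With this invariance in hand, if $\bx$ is non-parabolic, then $K_{\Omega}$ is nowhere zero for every tmb, in particular $K_{\Omega}\neq 0$ on $\Sigma(\bx)$, so $(K_{\Omega},H_{\Omega})\neq \mathbf{0}$ on $\Sigma(\bx)$ for all tmb. Applying the cited Corollary 3.23 of \cite{alexandro2019fundamental} yields that $\bx$ is a wavefront. I expect the only mildly technical step to be the transformation law for $\bII_{\Omega}$ and the resulting formula for $K_{\Omega}$ under a change of tmb; everything else is immediate from the definitions and the cited corollary.
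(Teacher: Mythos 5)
Your argument is correct and follows essentially the same route as the paper: the remark rests on Corollary 3.23 of \cite{alexandro2019fundamental} together with the fact that the zero set of $K_{\Omega}$ does not depend on the choice of tmb, which the paper obtains by citing Proposition 3.18 of that reference (see Proposition \ref{propnaopa}), while you verify it directly via the transformation law $\widetilde{\bI}_{\Omega}=\mathbf{A}^{T}\bI_{\Omega}\mathbf{A}$, $\widetilde{\bII}_{\Omega}=\pm\mathbf{A}^{T}\bII_{\Omega}$, giving $\widetilde{K}_{\Omega}=K_{\Omega}/\det\mathbf{A}$. Your computation is a legitimate self-contained substitute for that citation (note that the relation $\widetilde{\bomega}=\bomega\mathbf{A}$ between two tmbs is guaranteed because non-parabolic frontals are proper, so $T_{\Omega}=T_{\widetilde{\Omega}}$), and the rest of your deduction matches the paper's.
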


\begin{prop}\label{propnaopa}
	The notion of non-parabolicity is independent of \text{tmb}.
\end{prop}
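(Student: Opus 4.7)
The plan is to translate a change of tmb into a smooth matrix-valued change of basis and then track how the quantities $\bI_\Omega$, $\bII_\Omega$, $\bm{\mu}_\Omega$, and hence $K_\Omega$, transform.

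First I would fix two tangent moving bases $\bomega$ and $\widetilde{\bomega}$ of $\bx$ defined on the same open set. Since, as remarked just after Definition \ref{def22}, any two tmb of a proper frontal generate the same plane $T_\Omega(q)=T_{\widetilde\Omega}(q)$ at every $q$, there is a smooth map $\mathbf{B}:U\to GL(2,\R)$ with $\widetilde{\bomega}=\bomega\mathbf{B}$. This is the starting point, and it is the natural analogue of the ``change of frame'' matrix used for regular surfaces.

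Next I would compute how the unit normal and the fundamental matrices transform. Because $\widetilde{\w}_1\times\widetilde{\w}_2=(\det\mathbf{B})\,\w_1\times\w_2$, one has $\widetilde{\bn}=\varepsilon\,\bn$ with $\varepsilon=\mathrm{sign}(\det\mathbf{B})$ (constant on each connected component of $U$). A direct substitution then gives
\begin{align*}
\widetilde{\bI}_\Omega=\mathbf{B}^T\bI_\Omega\mathbf{B},\qquad
\widetilde{\bII}_\Omega=-\widetilde{\bomega}^T D\widetilde{\bn}=\varepsilon\,\mathbf{B}^T\bII_\Omega.
\end{align*}
Plugging these into the definition $\bm{\mu}_\Omega=-\bII_\Omega^T\bI_\Omega^{-1}$ and simplifying the product $\mathbf{B}\mathbf{B}^{-1}$ that appears, I get
\begin{align*}
\widetilde{\bm{\mu}}_\Omega=-\varepsilon\,\bII_\Omega^T\,\bI_\Omega^{-1}\mathbf{B}^{-T}=\varepsilon\,\bm{\mu}_\Omega\,\mathbf{B}^{-T}.
\end{align*}

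Taking determinants gives $\widetilde{K}_\Omega=\det(\widetilde{\bm{\mu}}_\Omega)=\varepsilon^{2}\det(\bm{\mu}_\Omega)/\det\mathbf{B}=K_\Omega/\det\mathbf{B}$. Since $\mathbf{B}(q)\in GL(2,\R)$ for all $q\in U$, the factor $\det\mathbf{B}$ never vanishes, so $\widetilde{K}_\Omega(q)\neq 0$ if and only if $K_\Omega(q)\neq 0$; this gives the desired independence. I do not expect a major obstacle here: once the change-of-frame matrix $\mathbf{B}$ is in place, everything reduces to the bookkeeping above, the only subtle point being to keep track of the sign $\varepsilon$ coming from the reorientation of $\bn$, which conveniently appears squared in the final formula for $\widetilde{K}_\Omega$.
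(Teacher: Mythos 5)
Your proposal is correct, and every step of the bookkeeping checks out: the existence and smoothness of $\mathbf{B}$ follows from $T_\Omega=T_{\widetilde\Omega}$ (one can even write $\mathbf{B}=\bI_\Omega^{-1}\bomega^T\widetilde{\bomega}$ explicitly, which is smooth and pointwise invertible), the transformation laws $\widetilde{\bI}_\Omega=\mathbf{B}^T\bI_\Omega\mathbf{B}$ and $\widetilde{\bII}_\Omega=\varepsilon\,\mathbf{B}^T\bII_\Omega$ are right, and the conclusion $\widetilde{K}_\Omega=K_\Omega/\det\mathbf{B}$ does the job since $\det\mathbf{B}$ is nowhere zero. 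The difference from the paper is one of self-containedness rather than of mathematical substance: the paper's entire proof is a citation of Proposition 3.18 in the reference on tangent moving bases, which asserts precisely that the zero set of $K_\Omega$ is independent of the tmb, whereas you reprove that fact from scratch by the change-of-frame computation. Your route buys a reader-friendly, verifiable argument that needs nothing beyond the definitions in Section 3 (and it yields the sharper quantitative statement $\widetilde{K}_\Omega=K_\Omega/\det\mathbf{B}$, not just equality of zero sets); the paper's route buys brevity and avoids duplicating a computation already published elsewhere. The only point worth being careful about, which you did handle, is that $\varepsilon=\mathrm{sign}(\det\mathbf{B})$ must be locally constant so that it passes through the derivative in $D\widetilde{\bn}$.
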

\begin{proof}
	Via proposition 3.18 in \cite{alexandro2019fundamental} the zeros of $K_{\Omega}$ are independent of \text{tmb}.
\end{proof}

The next proposition provides a representation formula for non-parabolic frontals.
\begin{prop}
	Let $\bx: (U, 0) \rightarrow (\R^3,0)$ be a germ of non-parabolic frontal, $\bomega$ a tmb of $\bx$ and $0 \in \Sigma(\bx)$. Then, up to an isometry $\bx$ is $\cR$-equivalent to 
	\begin{align*}
		\by(u_{1}, u_{2}) = (a(u_{1}, u_{2}),b(u_{1}, u_{2}), \int_{0}^{u_{1}}(t_{1}a_{u_1}(t_{1},u_{2}) + u_{1}b_{u_1}(t_{1},u_{2}))dt_{1} + \int_{0}^{u_{2}}t_{2}b_{u_2}(0,t_{2})dt_{2}),
	\end{align*}
	where $a, b$ are smooth functions such that $a_{u_{2}} = b_{u_1}$.
\end{prop}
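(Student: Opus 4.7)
The strategy is to reduce $\bx$ to the stated form by a rotation of $\R^3$ followed by an $\cR$-equivalence whose existence is underwritten by the non-parabolicity hypothesis. First, I rotate $\R^3$ so that $\bn(0)=(0,0,1)$, which gives $n_3>0$ on a neighborhood of $0$ and furnishes a canonical tangent moving basis $\w_1=(1,0,-n_1/n_3)^T$, $\w_2=(0,1,-n_2/n_3)^T$; writing $\bx=(x_1,x_2,x_3)$, the frontal condition $\bx_{u_i}\in T_\Omega$ forces $x_{3,u_i}=-(n_1/n_3)x_{1,u_i}-(n_2/n_3)x_{2,u_i}$, so the third coordinate is already determined by the first two and by $\bn$.

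Second, I exploit non-parabolicity. A direct computation of $\bII_\Omega$ in this tmb gives $\bII_\Omega(0)=-D(n_1,n_2)(0)$, so the assumption $K_\Omega\neq 0$ is equivalent to $\det D(n_1,n_2)(0)\neq 0$; in particular the Gauss map $\bn$ is an immersion at $0$. Using this non-degeneracy I construct a germ of diffeomorphism $\tilde h:(\widetilde U,0)\to(U,0)$ so that, setting $\by=\bx\circ\tilde h=(a,b,c)$, the first two components satisfy $a_{u_2}=b_{u_1}$. The equation on $\tilde h$ amounts to a single first-order scalar PDE --- the symmetrization of the mixed partials of $x_1\circ\tilde h$ and $x_2\circ\tilde h$ --- and the non-vanishing of $D(n_1,n_2)(0)$ furnishes precisely the non-degeneracy required to solve it near $0$ with $D\tilde h(0)$ invertible.

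Third, with $a_{u_2}=b_{u_1}$ in force, I integrate the frontal relation from the first step along a broken path from $(0,0)$ to $(u_1,u_2)$. Repeated integration by parts, combined with $a_{u_2}=b_{u_1}$, allows the quotients $n_1/n_3$ and $n_2/n_3$ to be traded for expressions in $u_1,u_2,a,b$, and the formula for $c$ collapses to
\[
c(u_1,u_2)=\int_0^{u_1}\bigl(t_1\,a_{u_1}(t_1,u_2)+u_1\,b_{u_1}(t_1,u_2)\bigr)\,dt_1+\int_0^{u_2}t_2\,b_{u_2}(0,t_2)\,dt_2,
\]
as claimed.

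The main obstacle is the middle step: engineering the diffeomorphism $\tilde h$ that produces the closedness $a_{u_2}=b_{u_1}$ of the $1$-form $a\,du_1+b\,du_2$. Non-parabolicity is exactly the non-degeneracy that makes the required PDE solvable via the implicit function theorem; matching the resulting third component to the specific integrals in the statement then requires careful bookkeeping with integration by parts, and may require a gauge adjustment beyond a first naive choice (for instance, using the Gauss map itself as $\tilde h$ achieves $a_{u_2}=b_{u_1}$ but leaves $c$ in a different integral form that must be reworked).
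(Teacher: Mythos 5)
Your opening and closing moves match the intended argument (the paper itself only cites Proposition 4.1 of the reference on extendable curvature rather than giving a proof, but the argument there is the one you are circling): the rotation putting $\bn(0)=(0,0,1)$, the tmb $\w_1=(1,0,-n_1/n_3)$, $\w_2=(0,1,-n_2/n_3)$, the identity $\bII_{\Omega}(0)=-D(n_1,n_2)(0)$, and the observation that non-parabolicity is exactly the statement that the Gauss map is an immersion at $0$ are all correct, and that is indeed the only place the hypothesis is used.

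The genuine gap is your middle step. Forcing $a_{u_2}=b_{u_1}$ by solving an underdetermined scalar PDE for $\tilde h$ neither determines $\tilde h$ nor suffices for the conclusion: writing $c_{u_i}=g_1a_{u_i}+g_2b_{u_i}$ with $g_j=-n_j/n_3$ in the new coordinates, the integrability condition $c_{u_1u_2}=c_{u_2u_1}$ reads $g_{1,u_2}a_{u_1}+g_{2,u_2}b_{u_1}=g_{1,u_1}a_{u_2}+g_{2,u_1}b_{u_2}$, and imposing $a_{u_2}=b_{u_1}$ places no constraint forcing $(g_1,g_2)$ to become $(u_1,u_2)$, which is precisely what the displayed formula for the third component encodes. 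Consequently the claim that integration by parts ``trades'' $n_1/n_3$ and $n_2/n_3$ for expressions in $u_1,u_2,a,b$ is unjustified, and for a generic $\tilde h$ solving your PDE the stated formula for $c$ is simply false. The repair is the option you demote to a parenthetical: take $\tilde h$ to be the inverse of $u\mapsto(-n_1/n_3,-n_2/n_3)(u)$, a germ of diffeomorphism precisely because $\det D(n_1,n_2)(0)\neq 0$ and $n_3(0)=1$. Then $g_1=u_1$ and $g_2=u_2$, the integrability condition above \emph{yields} $a_{u_2}=b_{u_1}$ (closedness is a consequence, not something to engineer), and integrating $c_{u_1}=u_1a_{u_1}+u_2b_{u_1}$, $c_{u_2}=u_1a_{u_2}+u_2b_{u_2}$ along the broken path $(0,0)\to(0,u_2)\to(u_1,u_2)$ gives the formula at once. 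Carrying this out also shows that the first integrand must be $t_1a_{u_1}(t_1,u_2)+u_2b_{u_1}(t_1,u_2)$ --- the $u_1b_{u_1}$ in the printed statement does not differentiate back to a tangential $c_{u_1}$ --- a misprint your proposal could not detect because it never reaches a concrete computation.
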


\begin{proof}
	See proposition 4.1 in \cite{tito3}.
\end{proof}

\subsection{Frontals with extendable Gaussian curvature}
Now, taking into account the results in \cite{tito3}, we investigate some classes of frontals for which the Gaussian curvature admits a smooth extension. These classes play an important role in section \ref{sec5}, where we define the Blaschke vector field of a frontal.

\subsubsection{Frontals with extendable normal curvature}\label{firstclass}\hfill\\
Let $\bx: U \to \R^3$ be a proper frontal with extendable normal curvature. It follows from corollary 3.1 in \cite{tito3} that if $\bx: (U, 0) \to (\R^3, 0)$ is a frontal with extendable normal curvature and $0$ a singularity of rank $1$, then its Gaussian curvature has a smooth extension. Furthermore, this extension is non-vanishing if and only if $\bx \isEquivTo{} \bn$, where $\bn$ is the unit normal vector field of $\bx$ and $\isEquivTo{}$ indicates that there are $\bomega_{1}$ and $\bomega_{2}$ tmb of $\bx$ and $\bn$, respectively and a smooth matrix valued map $\mathbf{B}: U \to GL(2, \R)$, such that $\bLambda_{2} = \bLambda_{1} \mathbf{B}$, where $D\bx = \bomega_{1} \bLambda_{1}^{T}$ and $D\bn = \bomega_{2} \bLambda_{2}^{T}$. The next theorem characterizes proper frontals of rank $1$ with extendable normal curvature.

\begin{teo}{\rm{({\cite{tito3}, Theorem 3.2})}}
Let $\bx:(U,0) \to (\R^3,0)$ be a proper frontal with extendable normal curvature and $0$ a singularity of rank 1, then after a rigid motion and a change of coordinates on a neighborhood of $0$, $\bx$ can be represented by the formula:
\begin{align*}
\by = (u_{1},b(u_{1},u_{2}),
&\int_{0}^{u_{2}}\left(\int_{0}^{t_2}h(u_{1},t_1)b_{u_2}(u_{1},t_1)dt_1\right)b_{u_2}(u_{1},t_2)dt_2+\int_{0}^{u_{2}}\left(\int_{0}^{u_{1}}l(t_1)dt_1\right)b_{u_2}(u_{1},t_2)dt_2\\
&+\int_{0}^{u_{1}}\left(\int_{0}^{t_2}l(t_1)dt_1\right)b_{u_1}(t_2,0)dt_2+\int_{0}^{u_{1}}\left(\int_{0}^{t_2}r(t_1)dt_1\right)dt_2),\nonumber
\end{align*}
where $b,h,l,r$ are smooth function on  neighborhoods of the origin in each case.
\end{teo}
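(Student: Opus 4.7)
The plan is to bring $\bx$ into a Monge-type normal form using the rank~1 hypothesis, translate the extendable normal curvature condition into a first-order PDE for the third coordinate, and then integrate.

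First, since $\rank D\bx(0)=1$, after permuting $u_1\leftrightarrow u_2$ I may assume $\bx_{u_1}(0)\neq 0$ and, replacing $u_2$ by $u_2-\lambda u_1$ if necessary, also $\bx_{u_2}(0)=0$. A rigid motion then aligns $\bx_{u_1}(0)$ with $e_1$ and the limiting tangent plane $T_\Omega(0)$ with $\langle e_1,e_2\rangle$, so $\bn(0)=e_3$. The implicit function theorem applied to the first component (whose $u_1$-derivative at $0$ equals $1$) lets me reparametrize so that $\bx_1(u_1,u_2)=u_1$ identically, yielding the Monge form
\begin{align*}
\bx(u_1,u_2)=\bigl(u_1,\,b(u_1,u_2),\,c(u_1,u_2)\bigr),
\end{align*}
with $b,c$ smooth and $b_{u_1}(0)=c_{u_1}(0)=b_{u_2}(0)=c_{u_2}(0)=0$.

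Second, I factor $\bx_{u_2}=\mu\cdot v$ with $v=(0,\tilde b,\tilde c)$ smooth and non-vanishing at $0$, obtaining a tangent moving basis $\bomega=(\bx_{u_1},v)$ with $\bomega(0)=(e_1,e_2)$. After computing $\bI_\Omega$, $\bII_\Omega$ and $\bm{\mu}_\Omega$ explicitly in terms of $b,c$ and their derivatives, I would translate the hypothesis that the normal curvature $k_q(\vartheta)=\vartheta^T\bII\vartheta/\vartheta^T\bI\vartheta$ extends smoothly across $\Sigma(\bx)$ into a smooth extension of the quotient $c_{u_2}/b_{u_2}$. A Hadamard-type argument should then identify this extension as
\begin{align*}
\frac{c_{u_2}}{b_{u_2}}(u_1,u_2) = \int_0^{u_2} h(u_1,t_1)\,b_{u_2}(u_1,t_1)\,dt_1 + \int_0^{u_1} l(t_1)\,dt_1,
\end{align*}
with $h$ and $l$ encoding respectively the interior variation of $c_{u_2}/b_{u_2}$ and its trace along $u_2=0$. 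Integrating $c_{u_2}=b_{u_2}\cdot[H(u_1,u_2)+L(u_1)]$ in $u_2$ from $0$ produces the first two integral pieces of the claimed formula. The remaining freedom in $c(u_1,0)$ is a smooth function of $u_1$ vanishing to second order at $0$; writing it as a double integral and splitting it into the term forced by the compatibility of $c_{u_1}$ along $u_2=0$ with the function $l$ chosen above (producing the factor $b_{u_1}(t_2,0)$) plus an independent free smooth function $r$ gives the last two integral terms.

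The main obstacle I expect is the second step: showing that the extendable normal curvature condition forces $c_{u_2}/b_{u_2}$ to admit \emph{exactly} the additive decomposition into a $b_{u_2}$-weighted $u_2$-integral and a pure $u_1$-integral, with no further smooth function of two variables allowed. This rests on a careful analysis of which terms in $\bII$ and $\bI$ develop compensating zeros along $\Sigma(\bx)$ and on a Hadamard lemma argument used to integrate the resulting first-order PDE; once this identification is secured, the full formula follows by direct integration and reorganization of the initial data.
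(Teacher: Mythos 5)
First, a point of comparison: the paper does not prove this statement at all --- it is imported verbatim as Theorem 3.2 of \cite{tito3}, so there is no internal proof to measure your argument against. Judged on its own, your proposal correctly identifies the overall shape of the argument (adapted Monge-type coordinates from the rank-one hypothesis, then integration of a divisibility condition on the third coordinate), but the decisive step is asserted rather than proved, and you say so yourself (``the main obstacle I expect''). Concretely: in the normal form $\bx=(u_1,b,c)$ with $\bx_{u_2}=(0,b_{u_2},c_{u_2})$, the smoothness of the quotient $m:=c_{u_2}/b_{u_2}$ is \emph{not} what the extendable-normal-curvature hypothesis buys you; it already follows from the proper frontal structure of rank $1$ in these coordinates, since $\bx_{u_2}$ must factor as $\lambda_{22}\,\w_2$ with $\w_2$ non-vanishing and with second component non-zero at the origin (the limiting tangent plane there is $\langle e_1,e_2\rangle$), so one may normalize $\w_2=(0,1,m)$ and $\lambda_{22}=b_{u_2}$. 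Attributing this smoothness to the curvature hypothesis misplaces the actual content of the theorem.

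What the hypothesis really forces is the divisibility $m_{u_2}\in(b_{u_2})$, i.e.\ $m_{u_2}=h\,b_{u_2}$, which upon integration gives $m(u_1,u_2)=\int_0^{u_2}h\,b_{u_2}\,dt+m(u_1,0)$ and hence the first two integral blocks; this is seen by computing $g\propto m_{u_2}b_{u_2}$, $G=b_{u_2}^{2}(1+m^{2})$, $f\propto m_{u_1}b_{u_2}$, $F=b_{u_2}(b_{u_1}+c_{u_1}m)$ and examining $k_q(\vartheta)$ as $q\to\Sigma(\bx)$ with $\vartheta\to(0,1)$, where numerator and denominator develop compensating zeros. Your sketch neither performs this computation nor addresses the converse direction (that this single divisibility suffices for the extension of $k_q(\vartheta)$ \emph{jointly} in $(q,\vartheta)$ near $\vartheta_1=0$ over the singular set), and both directions are needed to justify that the representation is ``exactly'' the one claimed. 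A smaller inaccuracy: the last two terms encode the free initial datum $c(u_1,0)$, and the split into the $l$-weighted piece and the $r$-piece is a mere rewriting of an arbitrary smooth function vanishing to second order; nothing is ``forced by the compatibility of $c_{u_1}$ along $u_2=0$'' as you suggest. In short, the scaffolding is right, but the heart of the proof --- the equivalence between extendability of the normal curvature and membership of $\partial_{u_2}(c_{u_2}/b_{u_2})$ in the ideal generated by $\lambda_{\Omega}=b_{u_2}$ --- is missing.
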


\subsubsection{Wave fronts with extendable Gaussian curvature}\label{secondclass}\hfill\\
If we look at a germ of wave front $\bx: (U,0) \to (\R^3,0)$, such that $0 \in \Sigma(\bx)$ and $\rank D\bx(0) = 1$, then it follows from remark 4.1 in \cite{tito3} that up to an isometry, $\bx$ is $\cR$-equivalent to 
\begin{align}\label{formwave1}
\by = (u_{1}, -h_{u_{2}}(u_1,u_2), \int_{0}^{u_{1}}(h_{u_{1}}(t, u_{2}) - u_{2}h_{u_{2}u_{1}}(t,u_{2}))dt - \int_{0}^{u_{2}}th_{u_{2}u_{2}}(0,t)dt).
\end{align}
By $\cR$-equivalence we mean that there is a germ of diffeomorphism $\tilde{h}: (\tilde{U}, 0) \to (U, 0)$ such that $\by = \bx \circ \tilde{h}$. Note that $D\by$ has decomposition $D\by = \bomega \bLambda^{T}$, where
\begin{align*}
\bomega = \begin{pmatrix}
1 & 0\\
0 & 1\\
g_{1} & u_{2}
\end{pmatrix}\; \text{and}\; \bLambda^{T} = \begin{pmatrix}
1 & 0\\
b_{u_1} & b_{u_2}
\end{pmatrix},
\end{align*}
for $h$, $g_{1}$ and $b$ smooth functions such that $g_{1} = h_{u_{1}}$ and $-b = h_{u_{2}}$. From this, we obtain that 
\begin{align}\label{gaussiancurvwave1}
K_{\Omega} = \dfrac{-h_{u_{1}u_{1}}}{(1+h_{u_{1}}^2 + u_{2}^2)^2}\; \text{and}\; \lambda_{\Omega} = -h_{u_2u_2}.
\end{align}
From corollary 4.3 in \cite{tito3}, it follows that if $h$ in (\ref{formwave1}) satisfies the equation $h_{u_{1}u_{1}} + c(u_{1}, u_{2})h_{u_{2}u_{2}} = 0$, where $c(u_{1}, u_{2})$ is a smooth function, then $\by$ is a wave front of rank $1$ with extendable Gaussian curvature. Furthermore, using this information in (\ref{gaussiancurvwave1}) and taking into account that on $U \setminus \Sigma(\bx)$ the Gaussian curvature is $K = \dfrac{K_{\Omega}}{\lambda_{\Omega}}$, we get, by the density of $U \setminus\Sigma(\bx)$, that its extension is given by
\begin{align*}
K = \dfrac{-c(u_{1}, u_{2})}{(1+h_{u_{1}}^2 + u_{2}^2)^2}.
\end{align*}
Thus, if $c(u_{1}, u_{2})$ is a non-vanishing function, we obtain a wave front of rank $1$ for which the Gaussian curvature has a non-vanishing extension. It follows from proposition 3.4 in \cite{tito3} that a wave front does not admit a smooth extension for its normal curvature, hence this class has no intersection with the class \ref{firstclass}.

\begin{obs}\label{classe1}\normalfont
	Note that the second order linear PDE
	\begin{align}\label{pde}
	h_{u_{1}u_{1}} + c(u_{1}, u_{2})h_{u_{2}u_{2}} = 0
	\end{align}
	is an important step in order to obtain frontals with extendable non-vanishing Gaussian curvature. If $c(u_{1}, u_{2}) = 1$, then the equation (\ref{pde}) is the two dimensional \textit{Laplace equation} (an elliptic equation), which was discovered by Euler in 1752. This PDE has been useful in many areas, like gravitational potential, propagation of heat, electricity and magnetism (for more information see chapter 7 in \cite{livropde}). On the other hand, if we take $c(u_{1}, u_{2}) = -a^2$, where $a \neq 0$ is a constant, then (\ref{pde}) is the one-dimensional \textit{wave equation} (a hyperbolic equation). The wave equation governs the dynamics of some physical systems, for instance, the guitar string, the longitudinal vibrations of an elastic bar, propagation of acoustic, fluid, and electromagnetic waves (see chapter 5 in \cite{livropde}). Note in (\ref{gaussiancurvwave1}) that the sign of $K$ is determined by the sign of $c(u_{1}, u_{2})$, but this sign also identifies if the PDE (\ref{pde}) is hyperbolic or elliptic.
	\end{obs}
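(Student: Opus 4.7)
The observation is largely expository: naming the two distinguished members of the family (\ref{pde}) as the Laplace equation (when $c \equiv 1$) and the one-dimensional wave equation (when $c \equiv -a^2$), and recalling their physical significance via \cite{livropde}. The one piece that actually warrants a short argument is the closing sentence, which links the sign of the extended Gaussian curvature $K$ to the elliptic/hyperbolic classification of (\ref{pde}). My plan is therefore to split the verification into a sign check for $K$ and a symbol check for the PDE type, then match the two.

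First I would invoke the extended formula derived just above the remark,
\[
K \;=\; \frac{-\,c(u_1,u_2)}{\bigl(1+h_{u_1}^{2}+u_{2}^{2}\bigr)^{2}}.
\]
Because the denominator is strictly positive on $U$, one reads off $\operatorname{sign} K = -\operatorname{sign} c$; in particular $K>0$ exactly where $c<0$ and $K<0$ exactly where $c>0$, so the sign of $K$ is indeed controlled by the sign of $c$. The specializations $c\equiv 1$ and $c\equiv -a^{2}$ are then just substitutions into (\ref{pde}) and produce the Laplace and wave equations in their standard form.

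Next I would recall the pointwise classification of a linear second-order PDE of the shape $A h_{u_1u_1}+2B h_{u_1u_2}+C h_{u_2u_2}+(\text{lower order})=0$ by the sign of the principal discriminant $B^{2}-AC$ (hyperbolic if positive, elliptic if negative, parabolic if zero). For (\ref{pde}) one has $A=1$, $B=0$, $C=c(u_1,u_2)$, so the discriminant equals $-c$. Hence (\ref{pde}) is hyperbolic at points where $c<0$ and elliptic at points where $c>0$. Comparing with the previous paragraph, these are precisely the regions where $K>0$ and $K<0$ respectively, which is the claimed correspondence. In particular $c\equiv 1$ yields an elliptic equation (matching $K<0$) and $c\equiv -a^{2}$ yields a hyperbolic equation (matching $K>0$), consistent with the two named specializations.

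I do not anticipate any real obstacle: the content reduces to a one-line sign computation on a manifestly positive denominator together with the textbook symbol-based classification of second-order linear PDEs, so the only judgment call is how much of the classification to quote rather than to reprove.
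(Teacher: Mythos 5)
Your verification is correct and follows exactly the reasoning the paper intends: the remark has no formal proof, and its only nontrivial claim reduces to reading off $\operatorname{sign} K = -\operatorname{sign} c$ from the extension formula $K = -c/(1+h_{u_1}^2+u_2^2)^2$ derived just above, combined with the standard discriminant $B^2-AC = -c$ classification. Nothing is missing.
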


\section{Equiaffine structure on frontals}\label{sec4}
In this section we seek to generalize the idea of equiaffine structure to the case of frontals. We define equiaffine transversal vector fields to a frontal similarly as defined in \cite{nomizu1994affinen} when considering regular surfaces, but as we are dealing with frontals, we need to take into account tangent moving basis and the limiting tangent planes. 

Let $\bxi: U \rightarrow \R^3$ be a vector field which is transversal to the frontal $\bx: U \rightarrow \R^3$ i.e. $\bxi(q) \notin T_{\Omega}$ for all $q \in U$, where $\bomega = \begin{pmatrix}
\w_{1} & \w_{2}
\end{pmatrix}$ is a tmb. Thus, for each $q \in U$ we can decompose 
\begin{align*}
\R^3 = T_{\Omega}(q) \oplus \langle \bxi(q) \rangle_{\R}.
\end{align*}	
If we write
\begin{align}\label{decompo4}
\w_{i_{u_{j}}} =\cD^{1}_{ij} \w_{1} + \cD^{2}_{ij}  \w_{2} + 
h_{ij}\bxi.
\end{align}
then we obtain a bilinear form $h_{\Omega}(q): T_{\Omega} \times T_{\Omega} \rightarrow \R$, such that $h_{\Omega}(q)(\w_i, \w_j) = h_{ij}(q)$. We call $h_{\Omega}$ the \textit{relative affine fundamental form} of $\bx$ induced by $\bxi$. In a similar way, we write
\begin{align}\label{decompo5}
\bxi_{u_i} = -S_{i}^{1}\w_{1} - S_{i}^{2}\w_{2} +  \tau_{i}\bxi.
\end{align}
Then for each $q \in U$ we have $S_{\Omega}(q): T_{\Omega}(q) \rightarrow T_{\Omega}(q)$, such that $S_{\Omega}(q)(\w_{i}) = S_{i}^{1}\w_{1} +  S_{i}^{2}\w_{2}$ and $\tau_{\Omega}(q): T_{\Omega}(q) \rightarrow \R$, such that $\tau_{\Omega}(q)(\w_{i}) = \tau_{i}$, $i=1,2$. We call $S_{\Omega}$ the \textit{relative shape operator} of $\bxi$ and $\tau_{\Omega}$ the \textit{relative transversal connection form}. 
\begin{defi}\normalfont\label{defequiafim}
The vector $\bxi$ defines an \textit{equiaffine structure} on $\bx$ (or $\bxi$ is \textit{equiaffine}) when the derivatives of $\bxi$ are in $T_{\Omega}(q)$, for all $q \in U$, i.e. when $\tau_{\Omega} \equiv 0$.
\end{defi}		

For a frontal $\bx: U \to \R^3$ and an equiaffine transversal vector field $\bxi: U \to \R^3$ we say that the symbols $\cD_{ij}^{k}$ and $h_{ij}$, given in (\ref{decompo4}), define an equiaffine structure on $\bx$.

\begin{defi}\normalfont
	Given $\bv_{1}, \bv_{2} \in T_{\Omega}$, we define
	\begin{align*}
	\theta(\bv_{1}, \bv_{2}) := \tilde{\omega}(\bv_{1}, \bv_{2}, \bxi),
	\end{align*}
	where $\tilde{\omega}$ is the canonical volume element in $\R^3$. The volume element $\theta$ is called the \textit{induced volume element}.
\end{defi}

\begin{prop}\label{prop1.3}
	Let $\bxi: U \rightarrow \R^3$ be a vector field which is transversal to a frontal $\bx: U \rightarrow \R^3$. Let us suppose that 
	\begin{align*}
	\bxi = \phi\tilde{\bxi} + Z
	\end{align*} 
	where $Z(u) = a(u)\w_{1}(u) + b(u)\w_{2}(u) \in T_{\Omega}$, $\bomega = \begin{pmatrix}
		\w_{1} & \w_{2}
	\end{pmatrix}$, $\tilde{\bxi}$ is an arbitrary transversal vector field and $\phi(u) \neq 0$, for all $u \in U$. Then, 
	\begin{enumerate}[(a)]
		\item \label{itema} $h_{\Omega} = \frac{1}{\phi}\tilde{h}_{\Omega}$.\label{eq1}
		\item \label{itemb} $\tau_{\Omega}(\w_{i}) = \frac{1}{\phi}\left(\tilde{h}_{\Omega}(Z,\w_{i}) + \phi_{u_{i}}\right)$.\label{eq2}
		\item \label{itemc} $\theta = \phi \tilde{\theta}$,
	\end{enumerate}
where $\tilde{h}_{\Omega}$, $\tilde{\theta}$ and $\tilde{\Omega}$ are associated to $\tilde{\bxi}$.

\end{prop}
\begin{proof}
	\begin{enumerate}[(a)]
		\item From $	\bxi = \phi\tilde{\bxi} + Z$, we get
	\begin{align*}
		\frac{\bxi - Z}{\phi} = \tilde{\bxi}.
	\end{align*}
	If we decompose $\w_{iu_{j}} = \tilde{\cD}^{1}_{ij} \w_{1} + \tilde{\cD}^{2}_{ij}  \w_{2} + 
	\tilde{h}_{ij}\tilde{\bxi}$, then
	\begin{align*}
		\w_{iu_{j}} &= \tilde{\cD}^{1}_{ij} \w_{1} + \tilde{\cD}^{2}_{ij}  \w_{2} + 
		\tilde{h}_{ij}\tilde{\bxi}\\
		&= \tilde{\cD}^{1}_{ij} \w_{1} + \tilde{\cD}^{2}_{ij}  \w_{2} + 
		\tilde{h}_{ij} \frac{\bxi - Z}{\phi}\\
		&= \left( \tilde{\cD}^{1}_{ij} \w_{1} + \tilde{\cD}^{2}_{ij}  \w_{2} - \frac{\tilde{h}_{ij}}{\phi}Z \right) + \frac{\tilde{h}_{ij}}{\phi}\bxi\\
		&= \cD^{1}_{ij} \w_{1} + \cD^{2}_{ij}  \w_{2} + 
		h_{ij}\bxi.
	\end{align*}
	Therefore, $h_{ij} = \dfrac{\tilde{h}_{ij}}{\phi}$, $i,j = 1,2$.
\item 	Let us write $B_{\Omega} = \begin{pmatrix}
		b_{ij}
	\end{pmatrix}$ the relative shape operator of $\tilde{\bxi}$. We know that
	\begin{align}\label{1}
		\bxi_{u_{i}} = -S_{\Omega}(\w_{i}) + \tau_{\Omega}(\w_{i})\bxi = -S_{\Omega}(\w_{i}) + \tau_{\Omega}(\w_{i})Z + \tau_{\Omega}(\w_{i})\phi \tilde{\bxi} .
	\end{align}
	On the other hand, 
	\begin{align}\label{2}
		\bxi_{u_{i}}(q) &= -\phi B_{\Omega}(q)(\w_{i}) + \phi_{u_{i}}\tilde{\bxi} + Z_{u_{i}}\\ 
		&= \left(Z_{u_{i}}^{\top} -\phi B_{\Omega}(\w_{i}) \right) + \left(\phi_{u_{i}} + \tilde{h}_{\Omega}(Z, \w_{i})\right)\tilde{\bxi} , \nonumber
	\end{align}
	where $Z_{u_{i}}^{\top}$ is the tangent component of $Z_{u_i}$. By comparing the transversal components of (\ref{1}) and (\ref{2}), it follows that $\tau_{\Omega}(\w_{i}) = \frac{1}{\phi}\left(\tilde{h}_{\Omega}(Z,\w_{i}) + \phi_{u_{i}}\right)$.
	
	\item By definition $\theta(\bv_{1}, \bv_{2}) = \omega(\bv_{1}, \bv_{2}, \bxi) = \omega(\bv_{1}, \bv_{2},  \phi\tilde{\bxi} + Z) = \omega(\bv_{1}, \bv_{2}, \phi\tilde{\bxi}) = \phi \tilde{\theta}(\bv_{1}, \bv_{2})$, where $\bv_{1}, \bv_{2} \in T_{\Omega}$.
\end{enumerate}
\end{proof}

\begin{prop}\label{prop1.2}
If $\bx$ is a non-parabolic frontal then the relative fundamental form induced by a transversal vector field is non-degenerate.
\end{prop}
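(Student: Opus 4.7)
The plan is to reduce the question to the unit-normal case that was already analyzed in Remark \ref{naodegen}, by expanding the transversal field $\bxi$ in the basis $\{\w_1, \w_2, \bn\}$ and comparing the resulting decomposition of $\w_{i_{u_j}}$ with the one in (\ref{decompo2}).

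First I would fix a tmb $\bomega = (\w_1, \w_2)$ of $\bx$ and the unit normal $\bn$ it induces. Since $\bxi$ is transversal, $\bxi(q) \notin T_{\Omega}(q)$, so I can write uniquely
\[
\bxi = \alpha_{1}\w_{1} + \alpha_{2}\w_{2} + \beta\,\bn,
\]
with smooth functions $\alpha_1, \alpha_2, \beta$ on $U$, and, crucially, $\beta(q) \neq 0$ for every $q \in U$ (otherwise $\bxi(q)$ would lie in $T_{\Omega}(q)$).

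Next I would substitute this expression into (\ref{decompo4}) and compare with (\ref{decompo2}). From (\ref{decompo4}),
\[
\w_{i_{u_j}} = \cD^{1}_{ij}\w_{1} + \cD^{2}_{ij}\w_{2} + h_{ij}\bxi
= (\cD^{1}_{ij} + h_{ij}\alpha_{1})\w_{1} + (\cD^{2}_{ij} + h_{ij}\alpha_{2})\w_{2} + h_{ij}\beta\,\bn,
\]
while (\ref{decompo2}) asserts
\[
\w_{i_{u_j}} = \cT^{1}_{ij}\w_{1} + \cT^{2}_{ij}\w_{2} + p_{ij}\bn.
\]
Since $\{\w_{1}(q), \w_{2}(q), \bn(q)\}$ is a basis of $\R^{3}$ for each $q$, matching the $\bn$-coefficients gives $p_{ij} = h_{ij}\beta$ for all $i,j$, i.e. at the level of matrices
\[
(h_{ij}) = \beta^{-1}\,\mathbf{II}_{\Omega}.
\]

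Finally, by Remark \ref{naodegen}, the relative bilinear form $p_\Omega$ is non-degenerate if and only if $\mathbf{II}_{\Omega}$ is non-singular, which is equivalent to $K_{\Omega} \neq 0$. Since $\bx$ is non-parabolic (Definition \ref{deffronntalnp}), this holds at every point, and because $\beta$ is nowhere zero, $(h_{ij}) = \beta^{-1}\mathbf{II}_{\Omega}$ is also non-singular everywhere. Therefore the matrix of $h_{\Omega}$ in the basis $\bomega$ is non-singular, so $h_{\Omega}$ is non-degenerate. There is no real obstacle here; the only thing to watch is that the decompositions (\ref{decompo2}) and (\ref{decompo4}) are with respect to the same tmb $\bomega$, so the identification of the $\bn$-components is legitimate pointwise and the conclusion follows independently of any choice.
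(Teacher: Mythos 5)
Your proof is correct and follows essentially the same route as the paper: both decompose $\bxi = \phi\bn + Z$ with nowhere-vanishing normal component, derive the relation $h_{ij} = p_{ij}/\phi$ (your $\beta$ is the paper's $\phi$), and conclude via Remark \ref{naodegen}. The only cosmetic difference is the direction of the substitution (you expand $\bxi$ in the $\bn$-basis and match coefficients, while the paper substitutes $\bn = (\bxi - Z)/\phi$ into (\ref{decompo2})), which yields the identical identity.
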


\begin{proof}
	Let $\bxi$ be a transversal vector field, thus we can write
	\begin{align*}
	\bxi = \phi\tilde{\bxi} + Z,
	\end{align*}
	where $Z$ is tangent, $\tilde{\bxi}$ is an arbitrary transversal vector field and $\phi: U \rightarrow \R \setminus 0$ is smooth. It follows from proposition \ref{prop1.3} that $h_{ij} = \dfrac{\tilde{h}_{ij}}{\phi}$, $i,j = 1,2$. From this, $\begin{pmatrix}
		h_{ij}
	\end{pmatrix}$ is non-singular if and only if $\begin{pmatrix}
		\tilde{h}_{ij}
	\end{pmatrix}$ is non-singular, for $\tilde{h}_{ij}$ associated to an arbitrary transversal vector field. As $\bx$ is non-parabolic, the result follows from remark \ref{naodegen}.
\end{proof}
\begin{obs}\normalfont
	Proposition \ref{prop1.2} shows that the notion of non-parabolicity of a frontal can be verified using any transversal vector field and its relative affine fundamental form. This notion is also independent of tangent moving basis. Thus, non-parabolicity is a property that belongs to the frontal and it is independent of metric.
\end{obs}

\begin{prop}\label{propdet}
	Let $\bx: U \rightarrow \R^3$ be a frontal, $\bxi$ and $\tilde{\bxi}$ arbitrary transversal vector fields with associated affine fundamental forms induced on $U \setminus \Sigma(\bx)$ given by $\mathbf{c}$ and $\tilde{\mathbf{c}}$, respectively. If the induced volume elements are $\theta$ and $\tilde{\theta}$, then $\det_{\theta}\mathbf{c}$ admits a smooth non-vanishing extension to $U$ if and only if $\det_{\tilde{\theta}}\tilde{\mathbf{c}}$ admits a smooth non-vanishing extension to $U$
	
\end{prop}

\begin{proof}
If we write $\bxi = \phi\tilde{\bxi} + Z$ where $Z$ is tangent, it follows analogously to proposition \ref{prop1.3} that $\mathbf{c} = \frac{1}{\phi}\tilde{\mathbf{c}}$ on $U \setminus \Sigma(\bx)$. Since $\theta = \phi \tilde{\theta}$, it follows from well known properties of the determinant funcion that
\begin{align}\label{eqdet}
	\phi^{4}\det_{\tilde{\theta}}\tilde{\mathbf{c}} = \det_{\theta}\mathbf{c}.
\end{align} Then, the result follows from (\ref{eqdet}).
\end{proof}

\begin{defi}\normalfont\label{defaffinenondeg}
	We say that a frontal $\bx: U \to \R^3$ is \textit{affine non-degenerate} if for an arbitrary transversal vector field $\det_{\theta}\mathbf{c}$ admits a smooth non-vanishing  extension to $U$.
\end{defi}

\section{The Blaschke and the conormal vector fields of a frontal}\label{sec5}
The Blaschke vector field and the conormal vector field associated to an equiaffine transversal vector field play an important role when studying regular surfaces from the affine differential geometry viewpoint. With the Blaschke structure, for instance, we define proper and improper affine spheres (see chapter 2 in \cite{nomizu1994affinen}) and the Blaschke line congruences (see section 6 in \cite{nossoartigo}). On the other hand, the conormal vector field makes calculations with the affine support function easier, for instance (see section 1 in \cite{cecil1994focal}).
Taking into account the importance of these two objects, in this section we define the Blaschke vector field of a frontal and the conormal vector field associated to an equiaffine vector field transversal to a frontal.
\subsection{The Blaschke vector field of a frontal}

\begin{defi}\normalfont\label{defBlaschke}
	Let $\bx: U \rightarrow \R^3$ be a proper frontal that is affine non-degenerate. We say that a transversal vector field $\bxi$ is the \textit{Blaschke vector field} of $\bx$ if it is a smooth extension of the usual Blaschke vector field defined on $U \setminus \Sigma(\bx)$.
\end{defi}
It follows from the density of $U \setminus \Sigma(\bx)$ and from the fact that the Blaschke vector field is unique up to sign (see \cite{nomizu1994affinen}) that the above extension is unique. Now, looking at the special affine group (or equiaffine group) \begin{align*}\mathbf{SA}(3,\R) = \lbrace \Phi: x \mapsto \mathbf{A}x + \mathbf{b}: \mathbf{A} \in M_{3}(\R), \det\mathbf{A} = 1\; \text{and}\; \mathbf{b}\; \text{is a constant vector}\rbrace
\end{align*}
we seek to show an invariance property for the Blaschke vector field defined here, in the following sense. Given a frontal $\bx: U\rightarrow \R^3$ and $\Phi \in \mathbf{SA}(3,\R)$, the Blaschke vector field of $\by = \Phi \circ \bx$ is $\overline{\bxi}$, where
\begin{align*}
\overline{\bxi}(q) = \Phi_{*}\bxi(q),\; \text{for all}\; q \in U.
\end{align*}
\begin{prop}
	Let $\bx: U \rightarrow \R^3$ be a proper frontal that is affine non-degenerate. If there exists, the Blaschke vector field of $\bx$ is an equiaffine invariant.
\end{prop}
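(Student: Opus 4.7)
The plan is to reduce the claim to the classical equiaffine invariance of the Blaschke vector field in the regular case and then extend from the dense regular locus to $U$ by continuity. First I would write $\Phi(x) = Ax + \mathbf{b}$ with $\det A = 1$, so that for any vector field $\bxi$ on $U$, the pushforward is simply $\Phi_*\bxi = A\bxi$. Since $\Phi$ is a diffeomorphism of $\R^3$, the map $\by = \Phi\circ \bx$ is smooth with $D\by = A\cdot D\bx$; invertibility of $A$ immediately yields $\Sigma(\by) = \Sigma(\bx)$, so $\by$ is again a proper frontal and $U\setminus\Sigma(\by)$ is open and dense. If $\bomega$ is a tmb for $\bx$, then $A\bomega$ is a tmb for $\by$ with $T_{A\Omega}(q) = A\cdot T_\Omega(q)$, and since $A$ is invertible, $A\bxi$ is transversal to $\by$ at $q$ if and only if $\bxi$ is transversal to $\bx$ at $q$.

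Next I would verify that Definition \ref{defBlaschke} applies to $\by$. Because $\det A = 1 > 0$, the transformation $\Phi$ preserves orientation, and the sign of the Gaussian curvature (which distinguishes elliptic, hyperbolic and parabolic points) is an affine invariant of a regular surface. Therefore $K_{\by}$ is non-vanishing on $U\setminus\Sigma(\by) = U\setminus\Sigma(\bx)$, and so the notion of Blaschke vector field of $\by$ is well defined. On this common regular part, the classical equiaffine invariance of the Blaschke vector field for non-parabolic regular surfaces (see, e.g., \cite{nomizu1994affinen}, Chapter 2) yields that the usual Blaschke vector field of $\by$ equals $A\bxi|_{U\setminus\Sigma(\bx)} = \Phi_*\bxi|_{U\setminus\Sigma(\bx)}$.

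To conclude, I would note that $\Phi_*\bxi = A\bxi$ is smooth on all of $U$, because $\bxi$ is smooth by hypothesis and $A$ is a constant linear map; it is transversal to $\by$ on $U$ by the first step. Thus $\Phi_*\bxi$ provides a smooth transversal extension to $U$ of the usual Blaschke vector field of $\by$ defined on $U\setminus\Sigma(\by)$. Since $U\setminus\Sigma(\bx)$ is dense in $U$, any two such smooth extensions must agree on $U$, so by Definition \ref{defBlaschke} we obtain $\overline{\bxi} = \Phi_*\bxi$ on all of $U$.

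The only mildly delicate points are the two inputs from the regular theory, namely the preservation of the sign of $K$ under orientation-preserving affine maps and the classical equiaffine invariance of the Blaschke vector field on a non-parabolic regular surface; both are standard and available in the cited references. Once these are granted, the passage from the regular locus to $\Sigma(\bx)$ is immediate from density and the uniqueness of smooth extensions, so no substantial technical obstacle arises in the singular setting.
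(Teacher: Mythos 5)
Your proposal is correct and follows essentially the same route as the paper: observe that $\Sigma(\Phi\circ\bx)=\Sigma(\bx)$, invoke the classical equiaffine invariance of the usual Blaschke vector field on the dense regular part, and conclude that $\Phi_{*}\bxi$ is the (unique) smooth transversal extension required by the definition. The paper's proof is just a terser version of yours, leaving implicit the verifications (tmb pushforward, preservation of non-degeneracy, uniqueness of the extension by density) that you spell out.
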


\begin{proof}
	Let $\Phi(x) = \mathbf{A}x + \mathbf{b}$ be an equiaffine transformation, then $\by = \Phi \circ \bx$ has the same singular set of $\bx$. Furthermore, it is known that on $U \setminus \Sigma(\bx)$ the usual Blaschke vector field of $\by$ is given by $\overline{\bxi} = \Phi_{*} \bxi$, where $\bxi$ is the Blaschke vector field of $\bx$. Let us keep the same notation for the extension of $\bxi$, then $\Phi_{*}\bxi$ is an extension of $\overline{\bxi}$ to $U$, so it is the Blaschke vector field of $\by$. 
\end{proof}	
Next, taking into account proposition \ref{propdet}, we characterize frontals for which it is possible to define the Blaschke vector field.
\begin{teo}\label{teoremacaracterizacaoBlaschke}
	A frontal $\bx: U \to \R^3$ admits a Blaschke vector field if and only if is affine non-degenerate and there are $a,b \in C^{\infty}(U, \R)$ such that
		\begin{align}\label{exptgblaschkelim0}
 \begin{pmatrix}
 a(q) \\ b(q)
 \end{pmatrix} = \lim\limits_{u \to q}\begin{pmatrix}
 \tilde{h}_{11} & \tilde{h}_{21}\\
 \tilde{h}_{12} & \tilde{h}_{22}
\end{pmatrix}^{-1} \begin{pmatrix}
	-\phi_{u_1}\\
	-\phi_{u_2}
	\end{pmatrix},\; \text{for all $q \in U$},
	\end{align} 
	where $\bomega = \begin{pmatrix}
	\w_{1} & \w_{2}
	\end{pmatrix}$ is a tangent moving basis of $\bx$, $\phi = \lvert \det_{\tilde{\theta}}\tilde{\mathbf{c}} \rvert^{1/4}$ and $\tilde{\theta}$, $\tilde{h}_{ij}$ are associated to an arbitrary transversal vector field $\tilde{\bxi}$ whose induced affine fundamental form on $U \setminus \Sigma(\bx)$ is $\tilde{\mathbf{c}}$.
\end{teo}

\begin{proof}
Let $\bxi = \phi \tilde{\bxi} + a\w_{1} + b\w_{2}$ be the Blaschke vector field of $\bx$, where $\tilde{\bxi}$ is an arbitrary transversal vector field. It follows from definition \ref{defBlaschkeusual} that $\bxi$ is equiaffine and on its regular part the volume element $\theta$ induced by $\bxi$ coincides with the volume element given by the non-degenerate metric $\mathbf{c}$ (see (\ref{affinefundam})). The volume $\omega_{\mathbf{c}}$, obtained from $\mathbf{c}$ is given by 
\begin{align*}
\omega_{\mathbf{c}}(X_{1}, X_{2}) = \left| \det \mathbf{c}_{ij} \right|^{1/2},\; \text{where $\mathbf{c}_{ij} = \mathbf{c}(X_{i}, X_{j})$}.
\end{align*}
Thus $\theta = \omega_{\mathbf{c}}$ if and only if $\left|\det_{\theta}\mathbf{c}\right| = 1$, where $\det_{\theta}\mathbf{c}$ denotes $\det \mathbf{c}_{ij}$ when we take a unimodular basis relative to $\theta$, that is a basis $\lbrace X_{1}, X_{2} \rbrace$ such that $\theta(X_{1}, X_{2}) = 1$. If we denote by $\tilde{\mathbf{c}}$ the affine fundamental form induced by $\tilde{\bxi}$, that is
\begin{align*}
	D_{X}Y = \nabla_{X}Y + \tilde{\mathbf{c}}(X,Y)\tilde{\bxi},\; \text{on $U \setminus \Sigma(\bx)$},
\end{align*}
and by $\tilde{\theta}$ the volume element induced by $\tilde{\bxi}$, we get from $\bxi = \phi \tilde{\bxi} + a\w_{1} + b\w_{2}$ that $\mathbf{c} = \dfrac{1}{\phi}\tilde{\mathbf{c}}$ and $\theta = \phi \tilde{\theta}$ on $U \setminus \Sigma(\bx)$ . From this, one can show that
$\det_{\theta}\mathbf{c} = \phi^{-4}\det_{\tilde{\theta}}\tilde{\mathbf{c}}$.
Therefore, in order to have $\left|\det_{\theta}\mathbf{c}\right| = 1$, it is enough to take $\phi = \left| \det_{\tilde{\theta}}\tilde{\mathbf{c}} \right|^{1/4}$. 


Then, it follows from the density of $U \setminus \Sigma(\bx)$ and from the smoothness of $\phi$ that $\phi(q) = \lim\limits_{u \to q}\left| \det_{\tilde{\theta}}\tilde{\mathbf{c}} \right|^{1/4}$, where $q \in \Sigma(\bx)$. From the fact that $\bxi$ is transversal to $\bx$, we have that $\phi \neq 0$ on $U$, consequently $\det_{\tilde{\theta}}\tilde{\mathbf{c}}$ admits a non-vanishing extension to $U$. 

Now, let us take a look at the tangent component of $\bxi$. Since $\bxi$ is equiaffine, we have $\tau_{\Omega} \equiv 0$, thus taking into account the expression for $\tau_{\Omega}$ given in proposition \ref{prop1.3} we get
\begin{align*}
\tilde{h}_{\Omega}(a\w_{1} + b\w_{2},\w_{1}) + \phi_{u_{1}} &= a\tilde{h}_{11} + b\tilde{h}_{21} + \phi_{u_{1}} = 0,\\
\tilde{h}_{\Omega}(a\w_{1} + b\w_{2},\w_{2}) + \phi_{u_{2}} &= a\tilde{h}_{12} + b\tilde{h}_{22} + \phi_{u_{2}} = 0.
\end{align*}
As $\bx$ is non-degenerate on its regular part $U \setminus \Sigma(\bx)$, it follows from the above equations that for $p \in U \setminus \Sigma(\bx)$
\begin{align}\label{exptgblaschke}
\begin{pmatrix}
a\\ b
\end{pmatrix} &= \begin{pmatrix}
\tilde{h}_{11} & \tilde{h}_{21}\\
\tilde{h}_{12} & \tilde{h}_{22}
\end{pmatrix}^{-1} \begin{pmatrix}
-\phi_{u_1}\\
-\phi_{u_2}
\end{pmatrix}.
\end{align} 
Since $a, b \in C^{\infty}(U,\R)$ and $U \setminus \Sigma(\bx)$ is dense, we obtain for a point $q \in \Sigma(\bx)$ that
\begin{align*}
\begin{pmatrix}
a(q)\\ b(q)
\end{pmatrix} &= \lim\limits_{u \to q}\begin{pmatrix}
\tilde{h}_{11} & \tilde{h}_{21}\\
\tilde{h}_{12} & \tilde{h}_{22}
\end{pmatrix}^{-1} \begin{pmatrix}
-\phi_{u_1}\\
-\phi_{u_2}
\end{pmatrix}.
\end{align*} 
Reciprocally, considering $\delta$ the non-vanishing extension of $\det_{\tilde{\theta}}\tilde{\mathbf{c}}$, it is enough to define $\phi = \lvert \delta \rvert^{1/4}$ and take $a,b \in C^{\infty}(U,\R)$ satisfying (\ref{exptgblaschke}), then $\bxi$ is the Blaschke vector field of $\bx$.
\end{proof}

\begin{obs}\normalfont
	Note that theorem \ref{teoremacaracterizacaoBlaschke} could also be written in terms of the Gaussian curvature instead of $\det_{\overline{\theta}}\overline{\mathbf{c}}$, since the transversal vector field $\tilde{\bxi}$ was taken arbitrarily. 
\end{obs}

\subsection{Examples}\label{subex}
Now, we provide some examples taking into account the classes described in section \ref{sec3}. First, a remark.

\begin{obs}\normalfont\label{classesex}
	\hfill 	\begin{enumerate}
		\item [(a)]Let $\bx: U \to \R^3$ be a frontal in the class \ref{firstclass}, such that its Gaussian curvature $K$ admits a non-vanishing extension, so in order to have a Blaschke vector field we just need to verify the condition (\ref{exptgblaschkelim0}) in theorem \ref{teoremacaracterizacaoBlaschke}. One can verify that $K_{u_{2}} = d b_{u_{2}}$, for a smooth function $d$ is a sufficient condition for this to happen. For instance, with any of the choices below
		\begin{itemize}
			\item $b = u_{2}^2$, $r=0$, $l=1$ and $h = h(u_{1},u_{2})$ any smooth function (see example \ref{ex2}),
			\item $b = \dfrac{2}{5}u_{2}^5 + u_{2}^2$, $r=0$, $l=1$ and $h = h(u_{1},u_{2})$ any smooth function (see example \ref{ex1}),
		\end{itemize}
		we obtain $\bx$ for which $K$ admits a non-vanishing extension and condition (\ref{exptgblaschkelim0}) is verified, hence we have a Blaschke vector field.
		
		\item [(b)]	If $\bx: U \to \R^3$ is a wave front given by the class \ref{secondclass}, such that the Blaschke vector field exists in the regular part $U \setminus \Sigma(\bx)$, then it is given by $\bxi = (\bxi_1, \bxi_2, \bxi_3)$, where
		\begin{subequations}\label{blaschkeclasse1}
			\begin{align}
				\bxi_1 &= -\dfrac{1}{4}\dfrac{c_{u_1}}{c^{3/4}h_{u_{1}u_{1}}},\\
				\bxi_2 &= -\dfrac{1}{4}\dfrac{c_{u_2}h_{u_{1}u_{1}} - c_{u_1}h_{u_{1}u_{2}}}{c^{3/4}h_{u_{1}u_{1}}},\\
				\bxi_3 &= \dfrac{1}{4}\dfrac{-u_{2}c_{u_2}h_{u_{1}u_{1}} + u_{2}c_{u_1}h_{u_{1}u_{2}} + 4ch_{u_1u_1} - c_{u_1}h_{u_1}}{c^{3/4}h_{u_{1}u_{1}}}.
			\end{align} 
		\end{subequations}
		Note in (\ref{blaschkeclasse1}) that is not always possible to extend $\bxi$, since $h_{u_1u_{1}}(q) = 0$ for all $q \in \Sigma(\bx)$. However, if we take for instance, $c$ a smooth function such that $c_{u_{1}} = d h_{u_{1}u_{1}}$, for a smooth function $d$, then $\bxi$ admits an extension to the entire $U$. If $d=0$, we get $c = c(u_{2})$, which is satisfied for the case $c$ constant, for instance (see example \ref{ex3}). It is worth observing that for $c$ constant, we get $\bxi = (0,0,\rho)$, for some $\rho \in \R$ and if we think of frontal improper affine spheres as those frontals for which the Blaschke vector field is constant, then this choice of $c$ provides a class of these frontals. This is an important class, specially if we seek to understand frontals from the affine viewpoint, and will be further discussed in future works. It is also important to remark that improper affine spheres with singularities is a topic of interest in differential geometry, see \cite{esferas1}, \cite{ishikawaimproper} \cite{Martinez}, \cite{milan2013} and \cite{Nakajo}, for instance.
	\end{enumerate}
\end{obs}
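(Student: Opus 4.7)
The plan is to verify each part by direct calculation using the representation formulas from section \ref{sec3} together with theorem \ref{teoremacaracterizacaoBlaschke}.

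For part (a), I would start from the representation formula of class \ref{firstclass} frontals and the tmb $\bomega$ it induces, so that $D\bx = \bomega \bLambda^{T}$. Computing $\bII_{\Omega}$ via (\ref{matrizII}) together with $\lambda_{\Omega} = \det \bLambda$ gives the extension $K = K_{\Omega}/\lambda_{\Omega}$, and the factor $b_{u_{2}}$ appears in $\lambda_{\Omega}$. The matrix $\bII_{\Omega}^{-1}$ then carries a singularity of order $\lambda_{\Omega}^{-1} \sim b_{u_{2}}^{-1}$ on $\Sigma(\bx)$, so for the limit in (\ref{exptgblaschkelim0}) to exist the gradient $(\phi_{u_{1}}, \phi_{u_{2}})$ with $\phi = \lvert K\rvert^{1/4}$ must vanish to matching order along $\Sigma(\bx)$. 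Since $\phi_{u_{2}} = \tfrac{1}{4}\lvert K\rvert^{-3/4}\mathrm{sgn}(K)\,K_{u_{2}}$, the hypothesis $K_{u_{2}} = d\, b_{u_{2}}$ supplies exactly the needed factor, which cancels the singular denominator and produces smooth $(a,b)$; theorem \ref{teoremacaracterizacaoBlaschke} then delivers the Blaschke vector field. For the two bulleted examples, with $r=0$, $l=1$ and $b$ as prescribed, I would substitute into the representation formula of theorem 3.2 of \cite{tito3}, check that the smooth extension $K$ is nowhere-vanishing, and verify $K_{u_{2}} = d\, b_{u_{2}}$ by direct differentiation (the cases $b = u_{2}^{2}$ and $b = \tfrac{2}{5}u_{2}^{5} + u_{2}^{2}$ are also carried out as examples \ref{ex2} and \ref{ex1}).

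For part (b), the wave front (\ref{formwave1}) comes with the explicit tmb $\bomega$ and matrix $\bLambda$ displayed in section \ref{secondclass}, and Gaussian curvature extension $K = -c/(1+h_{u_{1}}^{2}+u_{2}^{2})^{2}$ from (\ref{gaussiancurvwave1}). Setting $\phi = \lvert K\rvert^{1/4}$ and applying remark \ref{obscampoequiafim} expresses the tangential components $(a,b)$ in the basis $\bomega$ as $\bII_{\Omega}^{-1}(-\phi_{u_{1}}, -\phi_{u_{2}})^{T}$. Writing the extension candidate as $\bxi = \phi \bn + a\w_{1} + b\w_{2}$, with $\bn = (\w_{1}\times \w_{2})/\lVert \w_{1}\times \w_{2}\rVert$, and passing to ambient coordinates of $\R^{3}$ component by component yields precisely (\ref{blaschkeclasse1}) after simplification. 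The factor $h_{u_{1}u_{1}}$ in each denominator of (\ref{blaschkeclasse1}) is precisely what appears in the entries of $\bII_{\Omega}^{-1}$ on $\Sigma(\bx)$; under the ansatz $c_{u_{1}} = d\, h_{u_{1}u_{1}}$ every numerator acquires a matching factor of $h_{u_{1}u_{1}}$, so $\bxi$ extends smoothly to $U$. For $d \equiv 0$ we obtain $c = c(u_{2})$, and taking $c$ constant forces $c_{u_{1}} = c_{u_{2}} = 0$; then $\bxi_{1} = \bxi_{2} = 0$ and $\bxi_{3}$ collapses to the constant $c^{1/4}$, producing the claimed constant Blaschke vector field $(0,0,\rho)$ and exhibiting the class of frontal improper affine spheres.

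The main obstacle is algebraic bookkeeping in part (b): the derivation of (\ref{blaschkeclasse1}) requires patient computation of $\bomega^{T}\bomega$, of $\bII_{\Omega}$ using (\ref{matrizII}), of its inverse, and of the cross product defining $\bn$ for the specific $\bomega$ of (\ref{formwave1}). Uniqueness of the extension follows from the density of $U \setminus \Sigma(\bx)$, so I would perform the computation only on the regular part and extend by continuity, using theorem \ref{teoremacaracterizacaoBlaschke} to certify that the limit defines the Blaschke vector field on all of $U$.
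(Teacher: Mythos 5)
Your proposal is correct and takes essentially the same route as the paper, which itself offers no detailed argument for this remark beyond appealing to Theorem \ref{teoremacaracterizacaoBlaschke}, Remark \ref{obscampoequiafim} and the explicit computations in Examples \ref{ex2}, \ref{ex1} and \ref{ex3}: one computes $K$ and $\phi=\lvert K\rvert^{1/4}$ from the representation formulas of each class, forms $\bxi=\phi\bn+a\w_1+b\w_2$ via (\ref{exptgblaschke}), and checks that the stated divisibility hypotheses ($K_{u_2}=d\,b_{u_2}$, resp.\ $c_{u_1}=d\,h_{u_1u_1}$) cancel the factor of $\det\bII_{\Omega}$ (equivalently of $\lambda_{\Omega}$, resp.\ $h_{u_1u_1}$) in the denominator so that the limit (\ref{exptgblaschkelim0}) exists. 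Your downstream algebra in part (b) (divisibility of the numerators of (\ref{blaschkeclasse1}) and the collapse to $\bxi=(0,0,c^{1/4})$ for constant $c$) is exactly right; the only point stated more loosely than it could be is that in part (a) the hypothesis on $K_{u_2}$ controls only $\phi_{u_2}$, the contribution of $\phi_{u_1}$ being harmless because the relevant entries of the adjugate of $\bII_{\Omega}$ already lie in the ideal generated by $\lambda_{\Omega}$ for this class.
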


\begin{ex}\label{ex2}\normalfont
	Let $\bx: U \to \R^3$ defined by $\bx = (u_{1}, u_{2}^2, {4/15}u_{1}u_{2}^5 + 1/2u_{1}^3u_{2}^4 + u_{1}u_{2}^2)$, where $U = (-1,1) \times (-4,4)$ (see figure \ref{frontalnp2}). This frontal is a cuspidal cross-cap obtained from a $5/2$-cuspidal edge and satisfies $\bx \isEquivTo{} \bn$, where $\bn$ is its unit normal. We decompose $D\bx = \bomega \bLambda^{T}$, where
	\begin{align*}
	\bomega = \begin{pmatrix}
	1 & 0\\
0 & 1\\
	u_{2}^2(4/15u_{2}^3 + 3/2u_{1}^2u_{2}^2+1) & 1/3u_{1}\left( 3u_{1}^2u_{2}^2 + 2u_{2}^3 + 3 \right) 	
	\end{pmatrix}\; \text{and}\; \bLambda = \begin{pmatrix}
	1 & 0\\
	0 & 2u_{2}
	\end{pmatrix}.
	\end{align*}
	We have that $\lambda_{\Omega} = 2u_{2}$ and $$K_{\Omega} = {\frac {18 \times 10^4u_{2}\left( 54{u_{1}}^{4}{u_{2}}^{4}+9{u_{1}}^{
				2}{u_{2}}^{5}+4{u_{2}}^{6}+54{u_{1}}^{2}{u_{2}}^{2}+12{u_{2}}^{3
			}+9 \right) }{ \mu^{2}}},$$ where
		\begin{align*}
			 \mu &= 2025{u_{1}}^{4}{u_{2}}^{8}+720{u_{1}}^{2}{u_
				{2}}^{9}+900{u_{1}}^{6}{u_{2}}^{4}+64{u_{2}}^{10}+1200{u_{1}}^{4
			}{u_{2}}^{5}+3100{u_{1}}^{2}{u_{2}}^{6}\\
		&+480{u_{2}}^{7}+1800{u_{1
			}}^{4}{u_{2}}^{2}+1200{u_{1}}^{2}{u_{2}}^{3}+900{u_{2}}^{4}+900{
				u_{1}}^{2}+900. 
			\end{align*}
				Therefore, considering that at a regular point the Gaussian curvature is given by $\dfrac{K_{\Omega}}{\lambda_{\Omega}}$, we obtain that the extension of the Gaussian curvature is \begin{align*}K= {\frac {9 \times 10^4\left( 54{u_{1}}^{4}{u_{2}}^{4}+9{u_{1}}^{
			 			2}{u_{2}}^{5}+4{u_{2}}^{6}+54{u_{1}}^{2}{u_{2}}^{2}+12{u_{2}}^{3
			 		}+9 \right) }{ \mu^{2}}}.\end{align*} Then, writing $\phi = \left|K \right|^{1/4}$, the Blaschke vector field of $\bx$ is given by $\bxi = \phi \bn + a\w_{1} + b\w_{2}$, where $a$ and $b$ are obtained using (\ref{exptgblaschke}). Thus, $\bxi = \dfrac{1}{ \rho^{7/4}}\left(\dfrac{-3\sqrt{3}}{8}\xi_1, \dfrac{9\sqrt{3}}{8}\xi_2, \dfrac{\sqrt{3}}{240}\xi_3 \right)$, where		 			
		\begin{align*}
		\xi_1 &= 216\,{u_{1}}^{6}{u_{2}}^{4}-189\,{u_{1}}^{4}{u_{2}}^{5}+
		66\,{u_{1}}^{2}{u_{2}}^{6}+16\,{u_{2}}^{7}+324\,{u_{1}}^{4}{u_{2}}^{2}\\
		&+9\,{u_{1}}^{2}{u_{2}}^{3}+48\,{u_{2}}^{4}+108\,{u_{1}}^{2}+36\,u_{2}
		\end{align*}
		\begin{align*}
			\xi_2 &= \left( 216\,{u_{1}}^{4}{u_{2}}^{4}+87\,{u_{1}}^{2}{u_{2}}^{5}-16\,{u_
				{2}}^{6}+252\,{u_{1}}^{2}{u_{2}}^{2}+24\,{u_{2}}^{3}+72 \right) {u_{2}
			}^{2}\\
			\xi_3 &= 145800\,{u_{1}}^{8}{u_{2}}^{8}+35721\,{u_{1}}^{6}{u_{2}}^{9}+25326\,{u
				_{1}}^{4}{u_{2}}^{10}+4896\,{u_{1}}^{2}{u_{2}}^{11}+277020\,{u_{1}}^{6
			}{u_{2}}^{6}\\ &+896\,{u_{2}}^{12}+114129\,{u_{1}}^{4}{u_{2}}^{7}+39204\,{
				u_{1}}^{2}{u_{2}}^{8}+5088\,{u_{2}}^{9}+179820\,{u_{1}}^{4}{u_{2}}^{4}
			+88938\,{u_{1}}^{2}{u_{2}}^{5}\\ &+12096\,{u_{2}}^{6}+48600\,{u_{1}}^{2}{u
				_{2}}^{2}+14040\,{u_{2}}^{3}+6480\\
			\rho &= 54\,{u_{1}}^{4}{u_{2}}^{4}+9\,{u_{1}}^{2}{u_{2}}^{5}+4\,{u_{2}}^{6}+54
			\,{u_{1}}^{2}{u_{2}}^{2}+12\,{u_{2}}^{3}+9.		 
		\end{align*}
		\begin{figure}[ht!]
			\includegraphics[scale=0.50]{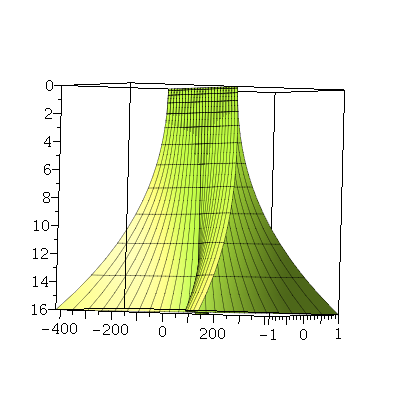}
			\includegraphics[scale=0.50]{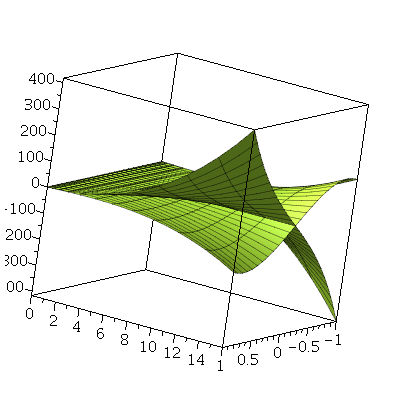}
			\caption{Frontal with extendable non-vanishing Gaussian curvature.}
			\label{frontalnp2}
		\end{figure}
\end{ex}

\begin{obs}\normalfont
It is worth observing that the Blaschke vector field $\bxi$ of a frontal is also a frontal, since $\bxi$ is equiaffine. In example \ref{ex2}, $\Sigma(\bxi)$ is given in figure \ref{singset}.
	\begin{figure}[ht!]
	\includegraphics[scale=0.30]{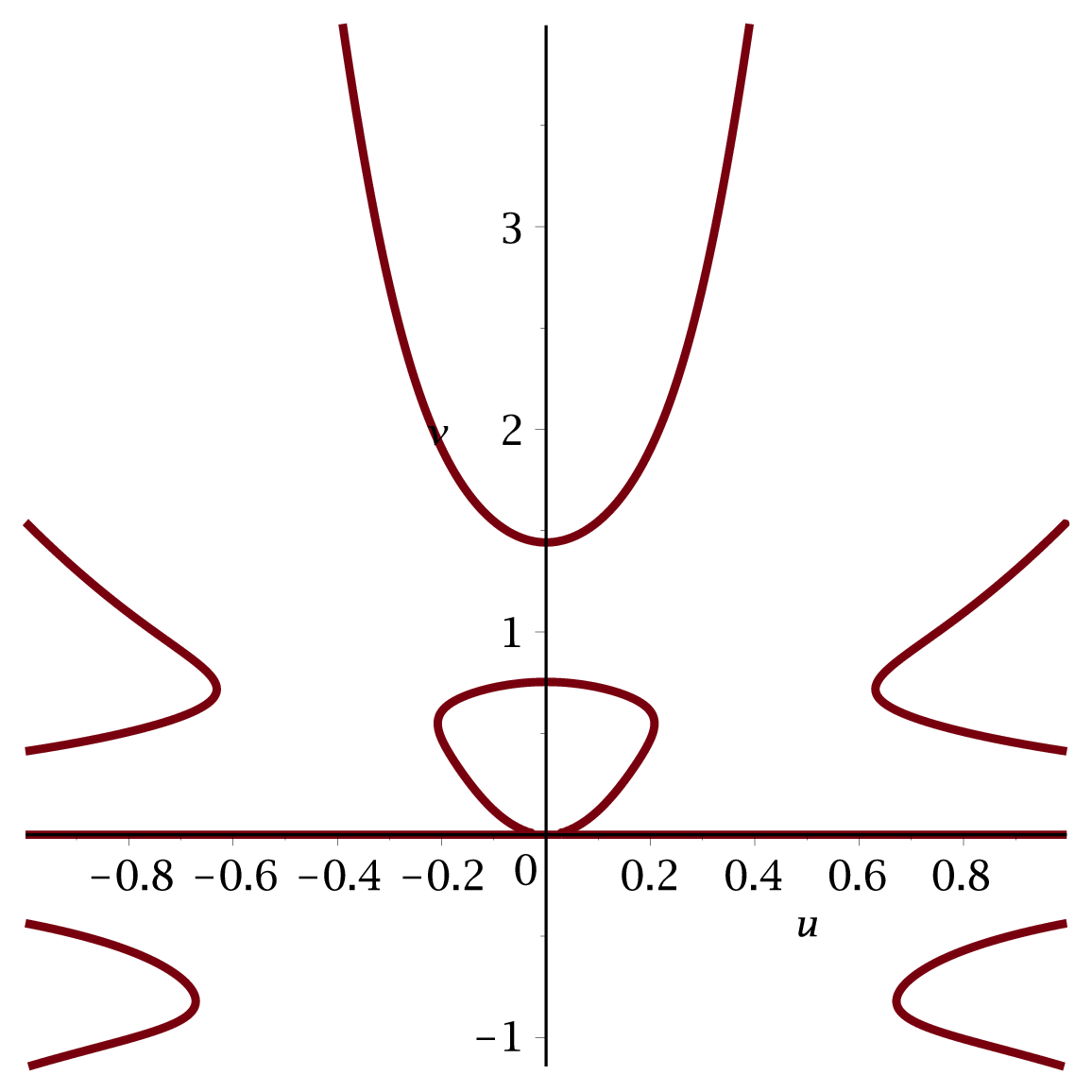}
	\caption{Singular set $\Sigma(\bxi)$ of the Blaschke vector field.}
	\label{singset}
\end{figure}
Then, the Blaschke vector field is a proper frontal and it is given in figure \ref{blaschke1}, restricting the domain to $(-1/10, 1/10) \times (-1/60, 1/60)$.
	\begin{figure}[ht!]
	\includegraphics[scale=0.55]{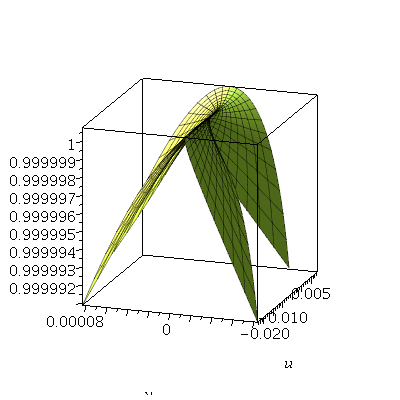}
	\includegraphics[scale=0.50]{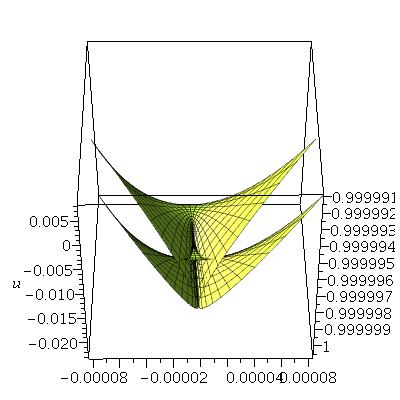}
	\caption{The Blaschke vector field from example \ref{ex2}}
	\label{blaschke1}
\end{figure}
\end{obs}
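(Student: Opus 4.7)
The plan is to produce, on a neighborhood of each point $q_0 \in U$, a smooth vector field of unit norm $\mathbf{m}$ satisfying $\langle \bxi_{u_i}, \mathbf{m} \rangle = 0$ for $i = 1, 2$, which is precisely the definition of a frontal. The key observation is that the equiaffineness of $\bxi$ forces $\bxi_{u_1}$ and $\bxi_{u_2}$ to lie in the limiting tangent plane $T_{\Omega}$ of $\bx$, so the unit normal $\bn$ of $\bx$ itself doubles as a unit normal vector field along $\bxi$.

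First, I would fix $q_0 \in U$ and pass to a neighborhood $U_{q_0}$ on which $\bx$ admits a smooth unit normal $\bn: U_{q_0} \to \R^3$ together with a tmb $\bomega = (\w_1, \w_2)$. By the definition of tmb and of $T_{\Omega}$, we have $\langle \w_i, \bn \rangle = 0$ for $i = 1, 2$, so every vector in $T_{\Omega}(q) = \langle \w_1(q), \w_2(q) \rangle_{\R}$ is orthogonal to $\bn(q)$.

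Second, I would invoke the equiaffineness of $\bxi$. Since $\bxi$ is the smooth extension from $U \setminus \Sigma(\bx)$ of the usual Blaschke vector field (Definition \ref{defBlaschke}), which is equiaffine on that dense open set, continuity of the coefficients $\cD_{ij}^{k}$ and $\tau_{i}$ appearing in the decompositions (\ref{decompo4}) and (\ref{decompo5}) forces $\tau_{\Omega} \equiv 0$ on all of $U$. By Definition \ref{defequiafim}, this means $\bxi_{u_i}(q) \in T_{\Omega}(q)$ for every $q \in U_{q_0}$ and every $i = 1, 2$, hence $\langle \bxi_{u_i}, \bn \rangle = 0$ on $U_{q_0}$. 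Since $\bn$ is smooth with $\|\bn\| = 1$, it is a unit normal vector field along $\bxi$, and as $q_0$ was arbitrary, $\bxi: U \to \R^3$ is a frontal. The only subtle step is the density argument extending $\tau_\Omega = 0$ from the regular part to all of $U$; the remainder of the proof is purely structural, following from the splitting $\R^3 = T_\Omega(q) \oplus \langle \bn(q) \rangle_\R$ and Definition \ref{defequiafim}.
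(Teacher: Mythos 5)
Your proposal is correct and matches the paper's own (one-line) justification: equiaffineness forces $\bxi_{u_i}$ into $T_{\Omega}$, so the unit normal $\bn$ of $\bx$, being orthogonal to $T_{\Omega}$, serves as a unit normal along $\bxi$, which is exactly the definition of a frontal. Your added density argument extending $\tau_{\Omega}\equiv 0$ from $U\setminus\Sigma(\bx)$ to all of $U$ is the right way to make the paper's implicit step explicit; the only part of the remark you do not address is the example-specific claim that $\bxi$ is a \emph{proper} frontal, which the paper verifies by direct computation of $\Sigma(\bxi)$.
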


\begin{ex} \label{ex1}\normalfont
	Let $\bx: U \rightarrow \R^3$ defined by $\bx = \left(u_{1},  \frac{2}{5}u_{2}^5 + u_{2}^2, u_{1}u_{2}^2  \right)$, for $ U = \left(-1,1 \right) \times (-1, 1)$ (see figure \ref{frontalnp}). This frontal satisfies $\bx \isEquivTo{} \bn$, where $\bn$ is its unit normal. We decompose $D\bx = \bomega \bLambda^{T}$, where
	\begin{align*}
	\bomega = \begin{pmatrix}
	1 & 0\\
	0 & u_{2}^3 + 1\\
	u_{2}^2 & u_{1}
	\end{pmatrix}\; \text{and}\; \bLambda = \begin{pmatrix}
	1 & 0\\
	0 & 2u_{2}
	\end{pmatrix}.
	\end{align*}
	We have that $\lambda_{\Omega} = 2u_{2}$ and 
	\begin{align*}
	K_{\Omega} = \dfrac{2u_{2}(u_{2} + 1)^2(u_{2}^2 - u_{2} + 1)^2}{(u_{2}^{10} + 2u_{2}^7 + u_{2}^6 + u_{2}^4 + 2u_{2}^3 + u_{1}^2 + 1)^2}.
	\end{align*}
	Then,
	\begin{align*}
	K = \dfrac{(u_{2} + 1)^2(u_{2}^2 - u_{2} + 1)^2}{(u_{2}^{10} + 2u_{2}^7 + u_{2}^6 + u_{2}^4 + 2u_{2}^3 + u_{1}^2 + 1)^2}
	\end{align*}
	is the extension of the Gaussian curvature to $U$. In a similar way to example \ref{ex2}, we obtain that the Blaschke vector field of $\bx$ is given by
	\begin{align*}
	\bxi = \frac{1}{4(u_{2}^2 + u_{2} + 1)^{3/2}(u_{2}+1)^{3/2}}(3u_{2}, 0, 7u_{2}^3 + 4).
	\end{align*}
	\begin{figure}[ht!]
		\includegraphics[scale=0.50]{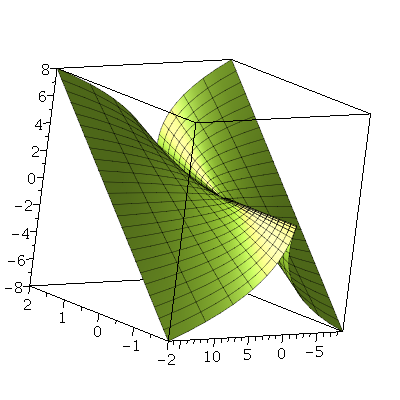}
		\includegraphics[scale=0.50]{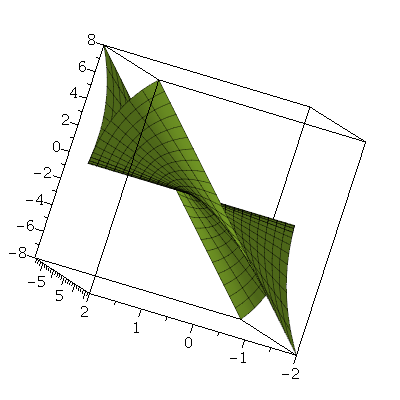}
		\caption{Frontal with extendable non-vanishing Gaussian curvature.}
		\label{frontalnp}
	\end{figure}
	
\end{ex}

\begin{ex}\normalfont\label{ex3}
	Let $\bx: \R^2 \to \R^3$ defined by $\bx = (u_{1}, 12\,{u_{1}}^{2}u_{2}-4\,{u_{2}}^{3}, {u_{1}}^{4}+6\,{u_{1}}^{2}{u_{2}}^{2}-3\,{u_{2}}^{4})$ (see figure \ref{frontalnp3}). This is a wave front of rank $1$ for which the Gaussian curvature admits a non-vanishing extension. We decompose $D\bx = \bomega \bLambda^{T}$, where
	\begin{align*}
	\bomega = \begin{pmatrix}
	1 & 0\\
	24u_{1}u_{2} & 1\\
	4u_{1}^3 + 12u_{1}u_{2}^2 & u_{2} 	
	\end{pmatrix}\; \text{and}\; \bLambda = \begin{pmatrix}
	1 & 0\\
	0 & 12u_{1}^2 - 12u_{2}^2
	\end{pmatrix}.
	\end{align*}
	We have that $\lambda_{\Omega} = -12u_{1}^2 + 12u_{2}^2$ and 
	\begin{align*}
	K_{\Omega} = -{\dfrac {12({u_{1}}^{2}-{u_{2}}^{2})}{ \left( 16\,{u_{1}}^{6}-96\,{u_{1
			}}^{4}{u_{2}}^{2}+144\,{u_{1}}^{2}{u_{2}}^{4}+{u_{2}}^{2}+1 \right) ^{
				2}}}.
	\end{align*}
	Then, the extension of the Gaussian curvature is
	\begin{align*}
	K = {\dfrac {-1}{ \left( 16\,{u_{1}}^{6}-96\,{u_{1
			}}^{4}{u_{2}}^{2}+144\,{u_{1}}^{2}{u_{2}}^{4}+{u_{2}}^{2}+1 \right) ^{
				2}}}.
	\end{align*}
	In a similar way to example \ref{ex2}, we obtain that the Blaschke vector field of $\bx$ is given by $\bxi = (0,0,1)$.
	\begin{figure}[ht!]
		\includegraphics[scale=0.50]{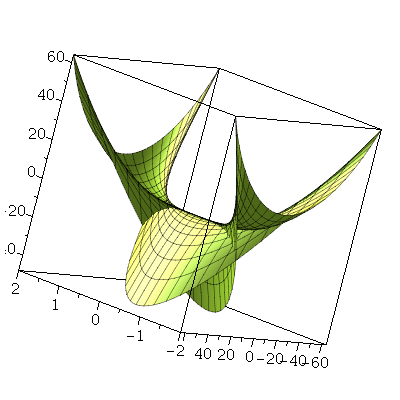}
		\includegraphics[scale=0.50]{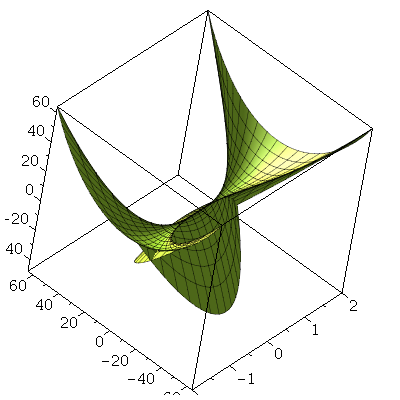}
		\caption{Wave front of rank $1$ with extendable non-vanishing Gaussian curvature.}
		\label{frontalnp3}
	\end{figure}
\end{ex}

\subsection{Conormal vector field}
\begin{defi}\normalfont
	Given a frontal $\bx: U \rightarrow \R^3$, we define the affine conormal vector field of $\bx$ relative to an equiaffine transversal vector field $\bxi$ as the vector field $\bnu: U \rightarrow \R^3 \setminus \bm{0}$, such that
	\begin{align*}
	\langle \bnu(u), \bxi(u) \rangle &= 1\\
	\langle \bnu(u), \w \rangle &= 0,\; \text{for all $\w \in T_{\Omega}(u)$},
	\end{align*}
	for all $u \in U$.
\end{defi}

\begin{obs}\label{obsortonormal}\normalfont
	It follows from the second condition above that the conormal vector field is always a multiple of the unit normal vector field $\bn$ of $\bx$. 
\end{obs}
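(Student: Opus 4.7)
The plan is to argue by pure linear algebra, exploiting the fact that the orthogonal complement of a two-dimensional subspace of $\R^3$ is one-dimensional. First I would fix $u \in U$ and observe that a tangent moving basis $\bomega = \begin{pmatrix} \w_{1} & \w_{2} \end{pmatrix}$ spans $T_{\Omega}(u)$, so $T_{\Omega}(u)$ is a two-dimensional subspace of $\R^3$. By the definition of $\bn$ as a unit normal vector field along $\bx$ one has $\langle \w_{i}(u), \bn(u) \rangle = 0$ for $i = 1, 2$, hence $\bn(u) \in T_{\Omega}(u)^{\perp}$. Since $T_{\Omega}(u)^{\perp}$ is one-dimensional in $\R^3$, it equals $\langle \bn(u) \rangle_{\R}$.

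Next I would apply the second condition in the definition of the conormal: $\langle \bnu(u), \w \rangle = 0$ for every $\w \in T_{\Omega}(u)$ says precisely that $\bnu(u) \in T_{\Omega}(u)^{\perp} = \langle \bn(u) \rangle_{\R}$. Therefore there exists a scalar $\rho(u) \in \R$ with $\bnu(u) = \rho(u)\bn(u)$, which is what the remark asserts. Finally, as a sanity check (not strictly part of the statement) I would note that the first condition $\langle \bnu(u), \bxi(u) \rangle = 1$ forces $\rho(u) = 1/\langle \bn(u), \bxi(u) \rangle$, and transversality of $\bxi$ to the limiting tangent plane guarantees $\langle \bn(u), \bxi(u) \rangle \neq 0$, so $\rho$ is a well-defined, smooth, non-vanishing scalar function. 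There is no real obstacle here: the claim is essentially a restatement of the fact that a vector in $\R^3$ orthogonal to a plane is normal to that plane, applied pointwise to the limiting tangent planes $T_{\Omega}(u)$.
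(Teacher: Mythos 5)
Your argument is correct and is exactly the reasoning the paper leaves implicit: the second condition places $\bnu(u)$ in $T_{\Omega}(u)^{\perp}$, which is one-dimensional and spanned by $\bn(u)$ (the paper notes $\langle \w_{1}, \bn \rangle = \langle \w_{2}, \bn \rangle = 0$ just before equation (\ref{matrizII})), so $\bnu = \rho\,\bn$. The extra observation that $\rho = 1/\langle \bn, \bxi \rangle$ is non-vanishing by transversality is a correct and harmless addition.
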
	

The next proposition shows that an important property of the conormal vector field is still valid when we are working with the case of non-parabolic frontals.
\begin{prop}
	Given a frontal $\bx: U \rightarrow \R^3$, a \text{tmb} $\bomega$ and an equiaffine transversal vector field $\bxi$, we have that
	\begin{align*}
	\langle \bnu_{{u_{i}}}, \bxi \rangle &= 0,\, i=1,2.\\
	\langle \bnu_{{u_{i}}}, \vv  \rangle &= -h_{\Omega}(\vv, \w_{i}),\, \text{where $\vv(u) \in T_{\Omega}(u)$ for all $u \in U$},\, i,j=1,2.
	\end{align*}
	Furthermore, if $\bx$ is non-parabolic then the conormal vector field is an immersion.
\end{prop}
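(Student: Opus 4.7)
The plan is to derive the two inner product identities by differentiating the defining relations of $\bnu$, and then to establish the immersion claim via a short linear algebra argument built on the non-degeneracy of $h_\Omega$ guaranteed by proposition \ref{prop1.2}.

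For the first identity, I would differentiate the relation $\langle \bnu, \bxi \rangle = 1$ with respect to $u_i$, obtaining $\langle \bnu_{u_i}, \bxi \rangle + \langle \bnu, \bxi_{u_i} \rangle = 0$. Because $\bxi$ is equiaffine, definition \ref{defequiafim} tells me $\bxi_{u_i} \in T_\Omega$, and then the second defining property $\langle \bnu, \w \rangle = 0$ for every $\w \in T_\Omega$ annihilates the second term, yielding $\langle \bnu_{u_i}, \bxi \rangle = 0$. This is the only step where the equiaffine hypothesis enters.

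For the second identity, I would differentiate $\langle \bnu, \w_j \rangle = 0$ with respect to $u_i$ to get $\langle \bnu_{u_i}, \w_j \rangle = -\langle \bnu, \w_{j,u_i} \rangle$. Substituting the decomposition from (\ref{decompo4}), namely $\w_{j,u_i} = \cD_{ji}^{1}\w_1 + \cD_{ji}^{2}\w_2 + h_{ji}\bxi$, and pairing with $\bnu$ (which kills the tangent part and extracts $h_{ji}$ from the transversal part), I obtain $\langle \bnu_{u_i}, \w_j \rangle = -h_{ji} = -h_\Omega(\w_j, \w_i)$. Writing $\vv = a\w_1 + b\w_2 \in T_\Omega$ and invoking bilinearity extends this to the stated formula $\langle \bnu_{u_i}, \vv \rangle = -h_\Omega(\vv, \w_i)$.

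For the immersion assertion, suppose at some $q \in U$ that $\alpha \bnu_{u_1}(q) + \beta \bnu_{u_2}(q) = 0$. Pairing against an arbitrary tangent vector $\vv \in T_\Omega(q)$ and using the identity just established gives $h_\Omega(\vv, \alpha \w_1 + \beta \w_2) = 0$. Non-parabolicity of $\bx$ combined with proposition \ref{prop1.2} forces $h_\Omega$ to be non-degenerate, so $\alpha \w_1(q) + \beta \w_2(q) = 0$, and linear independence of the tangent moving basis then gives $\alpha = \beta = 0$. Hence $\bnu_{u_1}, \bnu_{u_2}$ are linearly independent at every point, proving that $\bnu$ is an immersion. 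The main delicacy is keeping the index conventions of (\ref{decompo4}) straight (in the stated identity the variable argument of $h_\Omega$ sits in the first slot while $\w_i$ sits in the second), but beyond that the proof reduces to routine differentiation of defining relations.
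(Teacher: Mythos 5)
Your proposal is correct and follows essentially the same route as the paper: differentiate the two defining relations of $\bnu$, use the equiaffine hypothesis to kill $\langle \bnu, \bxi_{u_i}\rangle$, read off $-h_{\Omega}$ from the transversal component of the derivative of a tangent field, and derive the immersion claim from the non-degeneracy of $h_{\Omega}$ supplied by proposition \ref{prop1.2}. The only cosmetic difference is that you establish the second identity on the basis vectors $\w_{j}$ and extend by bilinearity, while the paper decomposes $\vv_{u_i}$ for a general tangent field $\vv$ directly; both are valid and equivalent.
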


\begin{proof}
	In order to simplify
notation, we drop the subscript in the notation for the induced affine fundamental form and indicate only by $h$. We know that $\langle \bnu, \bxi \rangle = 1$ so, differentiating and using the fact that $\bxi$ is equiaffine, we get $\langle \bnu_{u_{i}}, \bxi \rangle = 0$. 
	
	From $\langle \bnu, \vv \rangle = 0$, it follows that $\langle \bnu_{u_i}, \vv \rangle = - \langle \bnu, \vv_{u_i} \rangle  $. We can write
	\begin{align*}
	-\vv_{u_i} = -\vv_{u_i}^{\top} - h(\vv,\w_{i})\bxi,
	\end{align*}
	where $\vv_{u_i}^{\top}$ indicates the tangent component of $\vv_{u_i}$, thus considering the properties of the conormal vector field, we obtain
	\begin{align}\label{conormalex}
\langle \bnu_{u_i}, \vv \rangle = 	-	\langle \bnu, \vv_{u_i} \rangle = -h(\vv,\w_{i}).
	\end{align}
	
	Now, let us suppose that $\bx$ is non-parabolic and that $\bnu$ is not an immersion, hence there is $(a,b) \in \R^2 \setminus 0$ such that $a \bnu_{u_1} + b\bnu_{u_2} = 0$. Thus
	\begin{align*}
	\langle a \bnu_{u_1} + b\bnu_{u_2}, \vv  \rangle &= 0.
	\end{align*}
	By using (\ref{conormalex}) and the above expression, we get
	\begin{align*}
	0 = -ah(\vv, \w_{1}) - bh(\vv,\w_{2}) = h(\vv, -a\w_{1} - b\w_{2}),\, \text{for all $\vv$},
	\end{align*}
	but this contradicts the fact that $h$ is non-degenerate.
\end{proof}


\section{A fundamental theorem}\label{sec6}
In this section we provide, in theorem \ref{teofundamental}, a fundamental theorem for the theory developed in section \ref{sec2}. This theorem is a version for frontals of the fundamental theorem of affine differential geometry (see section 4.9 in \cite{Simon} for the classical result for regular surfaces). Thus, taking $U \subset \R^2$ an open subset and assuming the integrability conditions for the regular case are valid in an open dense subset of $U$, we obtain for each $q \in U$ a neighborhood $V \subset U$ of $q$, a frontal $\bx: V \to \R^3$ and an equiaffine transversal vector field $\bxi: V \to \R^3$, in the sense of section \ref{sec2}. In order to do this, we use the same approach applied in \cite{alexandro2019fundamental}.
\subsection{The compatibility equations}
Let $\bx: U\rightarrow \R^3$ be a proper frontal, $\bomega = \begin{pmatrix}
\w_{1} & \w_{2}
\end{pmatrix}$ a tmb and $\bn = \frac{\w_{1} \times \w_{2}}{\lVert \w_{1} \times \w_{2} \rVert}$ the unit normal vector field induced by $\bomega$. By considering the decomposition $D\bx = \bomega \bLambda^{T}$ and $\lambda_{\Omega} = \det \bLambda$, one can get on $ U \setminus \lambda_{\Omega}^{-1}(0)$ the following structural equations
\begin{subequations}\label{eqcompa00}
	\begin{align}\label{eqcompa001}
	\bx_{u_{1}u_1} &= \Gamma^{1}_{11}\bx_{u_1} + \Gamma^{2}_{11}\bx_{u_2} + e\bn\\
	\bx_{u_1u_{2}} &= \Gamma^{1}_{21}\bx_{u_1} + \Gamma^{2}_{21}\bx_{u_2} +f\bn\\
	\bx_{u_2u_2} &= \Gamma^{1}_{22}\bx_{u_1} + \Gamma^{2}_{22}\bx_{u_2} + g\bn,
	\end{align}
\end{subequations}
where $e$, $f$ and $g$ are the coefficients of the second fundamental form of $\bx$. In a similar way, if $\bxi: U \rightarrow \R^3$ is an equiaffine transversal vector field to $\bx$, the following hold on $U \setminus \lambda_{\Omega}^{-1}(0)$  		
\begin{subequations}\label{eqcompa0}
	\begin{align}\label{eqcompa01}
	\bx_{u_{1}u_1} &= \widetilde{\Gamma}^{1}_{11}\bx_{u_1} + \widetilde{\Gamma}^{2}_{11}\bx_{u_2} + c_{11}\bxi\\
	\bx_{u_1u_{2}} &= \widetilde{\Gamma}^{1}_{21}\bx_{u_1} + \widetilde{\Gamma}^{2}_{21}\bx_{u_2} + c_{21}\bxi\\
	\bx_{u_2u_2} &= \widetilde{\Gamma}^{1}_{22}\bx_{u_1} + \widetilde{\Gamma}^{2}_{22}\bx_{u_2} + c_{22}\bxi\\
	\bxi_{u_{1}} &= -b^{1}_{1}\bx_{u_1} - b^{2}_{1}\bx_{u_2}\\
	\bxi_{u_{2}} &= -b^{1}_{2}\bx_{u_1} - b^{2}_{2}\bx_{u_2}.
	\end{align}
\end{subequations}	
The symbols $c_{ij}$ induce a symmetric bilinear form called the affine fundamental $\mathbf{c}$ relative to $\bxi$, while the symbols $ \widetilde{\Gamma}^{k}_{ij}$ are associated to the induced affine connection $\nabla$ (see (\ref{affinefundam})).

\begin{prop}\label{propcoef1}
If we write $\bxi = \phi \bn + a \bx_{u_1} + b \bx_{u_2}$, where $\phi \neq 0$, then on $U \setminus \Sigma(\bx)$ we have:
	\begin{align}
	\begin{pmatrix} 
	c_{11} & c_{12}\\
	c_{12} & c_{22}
	\end{pmatrix} &= \dfrac{1}{\phi} 	\begin{pmatrix}
	e &f\\
	f & g
	\end{pmatrix}\label{propext},\\
	\widetilde{\Gamma}_{1} &= \begin{pmatrix}
	\widetilde{\Gamma}^{1}_{11} & \widetilde{\Gamma}^{2}_{11}\\
	\widetilde{\Gamma}^{1}_{21} & \widetilde{\Gamma}^{2}_{21}	
	\end{pmatrix} =  \begin{pmatrix}
	\Gamma^{1}_{11} & \Gamma^{2}_{11}\\
	\Gamma^{1}_{21} & \Gamma^{2}_{21}	
	\end{pmatrix} - \dfrac{1}{\phi}\begin{pmatrix}
	ae & be\\
	af & bf	\end{pmatrix} = \Gamma_{1} - 	\dfrac{1}{\phi} \begin{pmatrix}
	ae & be\\
	af & bf	\end{pmatrix},\label{propext1}\\
\widetilde{\Gamma}_{2} &= \begin{pmatrix}
	\widetilde{\Gamma}^{1}_{21} & \widetilde{\Gamma}^{2}_{21}\\
	\widetilde{\Gamma}^{1}_{22} & \widetilde{\Gamma}^{2}_{22}	
	\end{pmatrix} =  \begin{pmatrix}
	\Gamma^{1}_{21} & \Gamma^{2}_{21}\\
	\Gamma^{1}_{22} & \Gamma^{2}_{22}	
	\end{pmatrix} - \dfrac{1}{\phi} \begin{pmatrix}
	af & bf\\
	ag & bg	\end{pmatrix} = \Gamma_{2} - \dfrac{1}{\phi}	\begin{pmatrix}
	af & bf\\
	ag & bg	\end{pmatrix}\label{propext2}.
\end{align}
\end{prop}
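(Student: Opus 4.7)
The plan is to obtain the three identities simultaneously by substituting the decomposition $\bxi = \phi\bn + a\bx_{u_1} + b\bx_{u_2}$ into the classical Gauss-type formulas \eqref{eqcompa00} and then comparing with \eqref{eqcompa0} using the uniqueness of expansions in a basis of $\R^3$. Since we work in $U\setminus\Sigma(\bx)$, the vectors $\bx_{u_1},\bx_{u_2}$ are linearly independent and $\bxi$ is transversal to $\bx$, hence $\{\bx_{u_1},\bx_{u_2},\bxi\}$ forms a basis of $\R^3$ at each such point; this is the mechanism that will let us read off the coefficients.

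First I would solve $\bxi = \phi\bn + a\bx_{u_1} + b\bx_{u_2}$ for $\bn$, namely
\[
\bn=\frac{1}{\phi}\bigl(\bxi - a\bx_{u_1}-b\bx_{u_2}\bigr),
\]
which is legal since $\phi\neq 0$. Then for each pair $(i,j)\in\{(1,1),(1,2),(2,2)\}$ I plug this expression for $\bn$ into the corresponding equation of \eqref{eqcompa00}, collect the $\bx_{u_1}$-, $\bx_{u_2}$- and $\bxi$-components, and match with the right-hand side of \eqref{eqcompa0}. For example, from \eqref{eqcompa001} one obtains
\[
\bx_{u_1u_1}=\Bigl(\Gamma^{1}_{11}-\tfrac{ae}{\phi}\Bigr)\bx_{u_1}+\Bigl(\Gamma^{2}_{11}-\tfrac{be}{\phi}\Bigr)\bx_{u_2}+\tfrac{e}{\phi}\bxi,
\]
and uniqueness of coordinates in the basis $\{\bx_{u_1},\bx_{u_2},\bxi\}$ forces $c_{11}=e/\phi$, $\widetilde\Gamma^{1}_{11}=\Gamma^{1}_{11}-ae/\phi$ and $\widetilde\Gamma^{2}_{11}=\Gamma^{2}_{11}-be/\phi$. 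The analogous computations for $(1,2)$ and $(2,2)$ yield $c_{12}=f/\phi$, $c_{22}=g/\phi$ and fill out the remaining entries of the matrices $\widetilde\Gamma_1$ and $\widetilde\Gamma_2$, giving \eqref{propext}, \eqref{propext1} and \eqref{propext2}.

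I do not foresee any real obstacle: the argument is a direct change-of-basis computation in $\R^3$ carried out on the dense open set $U\setminus\Sigma(\bx)$, and no integrability or compatibility condition is needed because both systems \eqref{eqcompa00} and \eqref{eqcompa0} are given as hypotheses of the ambient setup. The only point worth emphasizing in the write-up is that the equality is stated on $U\setminus\Sigma(\bx)$, where the basis property of $\{\bx_{u_1},\bx_{u_2},\bxi\}$ holds; at points of $\Sigma(\bx)$ the tangent vectors $\bx_{u_1},\bx_{u_2}$ collapse and the identifications need not be well-defined in the same way (though the moving-basis versions encoded via $\bomega$ can still be used, as done elsewhere in the paper).
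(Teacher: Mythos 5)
Your proposal is correct and is essentially the paper's own argument run in the opposite direction: the paper substitutes $\bxi = \phi\bn + a\bx_{u_1}+b\bx_{u_2}$ into (\ref{eqcompa0}) and compares coefficients against (\ref{eqcompa00}) in the basis $\lbrace \bx_{u_1},\bx_{u_2},\bn\rbrace$, whereas you solve for $\bn$ and substitute into (\ref{eqcompa00}), comparing in the basis $\lbrace\bx_{u_1},\bx_{u_2},\bxi\rbrace$. The coefficient identities obtained are identical, so the two arguments are the same change-of-basis computation.
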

\begin{proof}
If $\bxi = \phi \bn + a \bx_{u_{1}} + b \bx_{u_2}$ in (\ref{eqcompa0}), then
\begin{align*}
\bx_{u_{1}u_1} &= (\widetilde{\Gamma}^{1}_{11} + c_{11}a)\bx_{u_1} + (\widetilde{\Gamma}^{2}_{11} + c_{11}b)\bx_{u_2} + \phi c_{11}\bn\\
\bx_{u_1u_{2}} &= (\widetilde{\Gamma}^{1}_{21} + c_{21}a)\bx_{u_1} + (\widetilde{\Gamma}^{2}_{21} + c_{21}b)\bx_{u_2} + \phi c_{21}\bn\\
\bx_{u_2u_2} &= (\widetilde{\Gamma}^{1}_{22} + c_{22}a)\bx_{u_1} + (\widetilde{\Gamma}^{2}_{22} + c_{22}b)\bx_{u_2} + \phi c_{22}\bn.
\end{align*}	
Hence, comparing this to (\ref{eqcompa00}) we have the result.
\end{proof}

If we look at the basis $\w_{1}, \w_{2}, \bn$ of $\R^3$, it follows from (\ref{decompo2}) that there are smooth functions $\cT^{k}_{ij}$ defined on $U$, $i,j,k \in \lbrace 1,2 \rbrace $, such that
\begin{subequations}\label{eqcompanormal}
	\begin{align}\label{eqcompanormal1}
	\w_{1u_1} &= \cT^{1}_{11}\w_{1} + \cT^{2}_{11}\w_2 + e_{\Omega}\bn\\
	\w_{2u_1} &= \cT^{1}_{21}\w_{1} + \cT^{2}_{21}\w_2 + f_{2\Omega}\bn\\
	\w_{1u_2} &= \cT^{1}_{12}\w_{1} + \cT^{2}_{12}\w_2 + f_{1\Omega}\bn\\
	\w_{2u_2} &= \cT^{1}_{22}\w_{1} + \cT^{2}_{22}\w_2 + g_{\Omega}\bn
	\end{align}
\end{subequations}
We know that $\w_{1}, \w_{2}, \bxi$ is a basis of $\R^3$, then from (\ref{decompo4}), (\ref{decompo5}) and from the fact that $\bxi$ is equiaffine there are smooth functions $\cD^{k}_{ij}$, $h_{ij}$ and $S^{i}_{j}$ defined on $U$, $i,j,k \in \lbrace 1,2 \rbrace $, such that	
\begin{subequations}\label{eqcompa}
	\begin{align}\label{eqcompa1}
	\w_{1u_1} &= \cD^{1}_{11}\w_{1} + \cD^{2}_{11}\w_2 + h_{11}\bxi\\
	\w_{2u_1} &= \cD^{1}_{21}\w_{1} + \cD^{2}_{21}\w_2 + h_{21}\bxi\\
	\w_{1u_2} &= \cD^{1}_{12}\w_{1} + \cD^{2}_{12}\w_2 + h_{12}\bxi\\
	\w_{2u_2} &= \cD^{1}_{22}\w_{1} + \cD^{2}_{22}\w_2 + h_{22}\bxi\\
	\bxi_{u_{1}} &= -S^{1}_{1}\w_{1} - S^{2}_{1}\w_{2}\\
	\bxi_{u_{2}} &= -S^{1}_{2}\w_{1} - S^{2}_{2}\w_{2}.
	\end{align}
\end{subequations}
If we write $\bxi = \phi \bn + a \bx_{u_1} + b\bx_{u_2}$ and $\bxi = \phi \bn + \tilde{a} \w_{1} + \tilde{b} \w_{2}$, where $\phi \neq 0$, then on $U \setminus \Sigma(\bx)$
\begin{align}\label{Rel1}
\begin{pmatrix}
\tilde{a} \\
\tilde{b}
\end{pmatrix} = \bLambda^{T} \begin{pmatrix}
a \\
b
\end{pmatrix}\; \text{and}\;
\begin{pmatrix}
\tilde{a}e_{\Omega} & \tilde{b}e_{\Omega}\\
\tilde{a}f_{2\Omega} & \tilde{b}f_{2\Omega}	\end{pmatrix} = \begin{pmatrix}
ae_{\Omega} & be_{\Omega}\\
af_{2\Omega} & bf_{2\Omega}	\end{pmatrix}\bLambda.
\end{align}
Hence, taking (\ref{eqcompanormal}), (\ref{eqcompa}), (\ref{Rel1}) and using the method of the proof of proposition \ref{propcoef1} one can get the following result:
\begin{prop}\label{propnew2}
If we write $\bxi = \phi \bn + \tilde{a} \w_{1} + \tilde{b} \w_{2}$, for $\phi, \tilde{a},\tilde{b} \in C^{\infty}(U, \R)$ and $\phi \neq 0$, then
\begin{align}
\begin{pmatrix}
h_{11} & h_{12}\\
h_{21} & h_{22}
\end{pmatrix} &= \dfrac{1}{\phi}\begin{pmatrix}
e_{\Omega} & f_{1\Omega}\\
f_{2\Omega} & g_{\Omega}
\end{pmatrix}\; \text{on $U$},\label{relacaoh}\\
\cD_{1} &= \cT_{1} -  \dfrac{1}{\phi}\begin{pmatrix}
\tilde{a}e_{\Omega} & \tilde{b}e_{\Omega}\\
\tilde{a}f_{2\Omega} & \tilde{b}f_{2\Omega}	\end{pmatrix} = \cT_{1} - \dfrac{1}{\phi} \begin{pmatrix}
ae_{\Omega} & be_{\Omega}\\
af_{2\Omega} & bf_{2\Omega}	\end{pmatrix}\bLambda\; \text{on $U \setminus \Sigma(\bx)$}, \label{expd1}\\
\cD_{2} &= \cT_{2} -  \dfrac{1}{\phi}\begin{pmatrix}
\tilde{a}f_{1\Omega} & \tilde{b}f_{1\Omega}\\
\tilde{a}g_{\Omega} & \tilde{b}g_{\Omega}
\end{pmatrix} = \cT_{2} - \dfrac{1}{\phi} \begin{pmatrix}
af_{1\Omega} & bf_{1\Omega}	\\
ae_{\Omega} & be_{\Omega}
\end{pmatrix}\bLambda\; \text{on $U \setminus \Sigma(\bx)$}\nonumber,
\end{align}
where $\cD_{1} = \begin{pmatrix}
\cD^{1}_{11} & \cD^{2}_{11}\\
\cD^{1}_{21} & \cD^{2}_{21}
\end{pmatrix}$,  $\cD_{2} = \begin{pmatrix}
\cD^{1}_{12} & \cD^{2}_{12}\\
\cD^{1}_{22} & \cD^{2}_{22}
\end{pmatrix}$, $\cT_{1} = \begin{pmatrix}
\cT^{1}_{11} & \cT^{2}_{11}\\
\cT^{1}_{21} & \cT^{2}_{21}	
\end{pmatrix}$ and $\cT_{2} = \begin{pmatrix}
\cT^{1}_{12} & \cT^{2}_{12}\\
\cT^{1}_{22} & \cT^{2}_{22}	
\end{pmatrix}$.
\end{prop}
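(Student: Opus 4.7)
I would mimic the proof of Proposition \ref{propcoef1}, but work in the basis $\lbrace \w_1, \w_2, \bn \rbrace$ of $\R^3$ rather than in $\lbrace \bx_{u_1}, \bx_{u_2}, \bn \rbrace$ (which is only available outside $\Sigma(\bx)$). The key point is that the structural equations (\ref{eqcompa}) for the tmb $\bomega$ in the basis $\lbrace \w_1, \w_2, \bxi \rbrace$ and the structural equations (\ref{eqcompanormal}) in the basis $\lbrace \w_1, \w_2, \bn \rbrace$ are both defined on all of $U$, so the resulting identities extend across the singular set without any need to invert $\bLambda$.

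First I would substitute $\bxi = \phi\bn + \tilde a\,\w_1 + \tilde b\,\w_2$ into each equation of (\ref{eqcompa}) and regroup, obtaining
\begin{align*}
\w_{iu_j} = (\cD^{1}_{ij} + h_{ij}\tilde a)\w_1 + (\cD^{2}_{ij} + h_{ij}\tilde b)\w_2 + (\phi\, h_{ij})\bn, \qquad i,j\in\lbrace 1,2\rbrace.
\end{align*}
Comparing coefficients against (\ref{eqcompanormal}) in the basis $\lbrace \w_1, \w_2, \bn\rbrace$, the normal component produces $\phi\,h_{11}=e_\Omega$, $\phi\,h_{12}=f_{1\Omega}$, $\phi\,h_{21}=f_{2\Omega}$, $\phi\,h_{22}=g_\Omega$, which arranged in matrix form is exactly (\ref{relacaoh}); this identity holds on all of $U$ since $\phi,\tilde a,\tilde b$ and the entries of $\bII_\Omega$ are all smooth on $U$. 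The tangential components then give $\cD^{k}_{ij} = \cT^{k}_{ij} - h_{ij}\cdot(\tilde a\text{ or }\tilde b)$, which, written as a matrix using (\ref{relacaoh}), yields the first equalities for $\cD_1$ and $\cD_2$.

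For the second equalities (those involving $\bLambda$), I would use the relation (\ref{Rel1}), which is only available in $U\setminus\Sigma(\bx)$ because it requires the two expressions $\bxi = \phi\bn + a\bx_{u_1} + b\bx_{u_2}$ and $\bxi = \phi\bn + \tilde a\,\w_1 + \tilde b\,\w_2$ to coexist. Factoring
\begin{align*}
\begin{pmatrix}\tilde a\,e_\Omega & \tilde b\,e_\Omega\\ \tilde a\,f_{2\Omega} & \tilde b\,f_{2\Omega}\end{pmatrix} = \begin{pmatrix}e_\Omega\\ f_{2\Omega}\end{pmatrix}\begin{pmatrix}\tilde a & \tilde b\end{pmatrix}
\end{align*}
and substituting $(\tilde a,\tilde b) = (a,b)\bLambda$, obtained by transposing (\ref{Rel1}), one arrives at $\begin{pmatrix}ae_\Omega & be_\Omega\\ af_{2\Omega} & bf_{2\Omega}\end{pmatrix}\bLambda$, which gives the second equality for $\cD_1$. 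An identical manipulation handles $\cD_2$.

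There is essentially no serious obstacle: the computation is a direct translation of the argument used for Proposition \ref{propcoef1} to the tmb setting. The only thing to watch is the domain of validity of each identity — those involving $\phi,\tilde a,\tilde b,\bomega,\bII_\Omega$ are valid on all of $U$, while anything involving $a,b$ (which are defined via $D\bx$) is only guaranteed on $U\setminus\Sigma(\bx)$, and this is exactly reflected in the statement.
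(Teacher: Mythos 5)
Your proof is correct and follows essentially the same route the paper indicates: substitute $\bxi = \phi\bn + \tilde a\,\w_1 + \tilde b\,\w_2$ into (\ref{eqcompa}), compare coefficients with (\ref{eqcompanormal}) in the basis $\lbrace\w_1,\w_2,\bn\rbrace$, and then convert $(\tilde a,\tilde b)$ to $(a,b)$ via (\ref{Rel1}) on $U\setminus\Sigma(\bx)$, exactly as the paper's "method of the proof of Proposition \ref{propcoef1}" prescribes. Note only that your computation for $\cD_2$ yields $ag_\Omega,\,bg_\Omega$ in the second row of the last matrix, which is the correct outcome; the $ae_\Omega,\,be_\Omega$ appearing there in the printed statement is a typo.
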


\begin{obs}\normalfont
With notation as in proposition (\ref{propnew2}), it follows from $\bII = \bLambda \bII_{\Omega}$ (see \ref{segform}), from \ref{propext} and from (\ref{relacaoh}) that
\begin{align}\label{decompmatrizaux}
\begin{pmatrix}
ae & be \\
af & bf
\end{pmatrix} &= \begin{pmatrix}
e & e \\
f & f
\end{pmatrix}\begin{pmatrix}
a & 0\\
0 & b
\end{pmatrix}
= \bLambda \begin{pmatrix}
e_{\Omega} & e_{\Omega} \\
f_{2\Omega} & f_{2\Omega}
\end{pmatrix}\begin{pmatrix}
a & 0\\
0 & b
\end{pmatrix},\\
\begin{pmatrix}
c_{11} & c_{12}\\
c_{12} & c_{22}
\end{pmatrix} &= \frac{1}{\phi} 	\begin{pmatrix}
e &f\\
f & g
\end{pmatrix} = \frac{1}{\phi} \bLambda\begin{pmatrix}
e_{\Omega} &f_{2\Omega}\\
f_{1\Omega} & g_{\Omega}
\end{pmatrix} = \bLambda \begin{pmatrix}
h_{11} & h_{12}\\
h_{12} & h_{22}
\end{pmatrix}\nonumber
\end{align}	
on $U \setminus \Sigma(\bx)$. Furthermore, just taking the decomposition $D\bx = \bomega \bLambda^{T}$, we have that
\begin{align*}
\begin{pmatrix}
S_{1}^{1} & S^{2}_{1}\\
S_{2}^{1} & S^{2}_{2}
\end{pmatrix} &=   \begin{pmatrix}
b_{1}^{1} & b_{1}^{2}\\
b_{2}^{1} & b^{2}_{2}
\end{pmatrix}\bLambda.
\end{align*}	

\end{obs}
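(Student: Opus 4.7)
The plan is to verify each of the three matrix factorizations by direct substitution, drawing on $\bII = \bLambda\bII_{\Omega}$ from (\ref{segform}), the identity $(c_{ij}) = \phi^{-1}\bII$ from (\ref{propext}), the identity $(h_{ij}) = \phi^{-1}\bII_{\Omega}$ from (\ref{relacaoh}), and the decomposition $D\bx = \bomega\bLambda^T$. No new analytic input is required; everything reduces to algebraic matrix identities in the regular part $U\setminus\Sigma(\bx)$, and smoothness of the scalar ingredients already guarantees the formulas there.

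For the first chain in (\ref{decompmatrizaux}), the equality $\begin{pmatrix} ae & be \\ af & bf \end{pmatrix} = \begin{pmatrix} e & e \\ f & f \end{pmatrix}\begin{pmatrix} a & 0 \\ 0 & b \end{pmatrix}$ is immediate by multiplying out. For the second equality I would extract the first columns of $\bII = \bLambda\bII_{\Omega}$, obtaining $\begin{pmatrix} e \\ f \end{pmatrix} = \bLambda \begin{pmatrix} e_{\Omega} \\ f_{2\Omega} \end{pmatrix}$; since $\begin{pmatrix} e_{\Omega} & e_{\Omega} \\ f_{2\Omega} & f_{2\Omega} \end{pmatrix}$ consists of two copies of this column, left-multiplication by $\bLambda$ produces $\begin{pmatrix} e & e \\ f & f \end{pmatrix}$, and right-multiplication by $\operatorname{diag}(a,b)$ yields the asserted factorization. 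For the middle chain, the first equality is exactly (\ref{propext}); substituting $\bII = \bLambda\bII_{\Omega}$ produces the second equality, and invoking (\ref{relacaoh}) to rewrite $\phi^{-1}\bII_{\Omega}$ as the matrix $(h_{ij})$ gives the third.

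For the shape-operator identity, I would compare the two expressions for $\bxi_{u_j}$: (\ref{decompo5}) gives $\bxi_{u_j} = -S_j^1\w_1 - S_j^2\w_2$, while the corresponding equation in (\ref{eqcompa0}) reads $\bxi_{u_j} = -b_j^1\bx_{u_1} - b_j^2\bx_{u_2}$. Expanding $\bx_{u_i} = \bLambda_{i1}\w_1 + \bLambda_{i2}\w_2$ from $D\bx = \bomega\bLambda^T$ and equating the coefficients of the linearly independent vectors $\w_1,\w_2$ yields $S_j^k = \sum_{i=1}^{2} b_j^i\,\bLambda_{ik}$, which is row $j$ of the asserted matrix identity. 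The only delicate aspect of the whole argument is bookkeeping --- keeping track of the transpose in $D\bx = \bomega\bLambda^T$ and of the placement of the upper and lower indices in $S^k_j$ versus $b^i_j$ --- but there is no substantive obstacle.
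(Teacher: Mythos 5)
Your verification is correct and follows essentially the same route the paper intends: the remark is justified only by citing $\bII=\bLambda\bII_{\Omega}$, (\ref{propext}), (\ref{relacaoh}) and the decomposition $D\bx=\bomega\bLambda^{T}$, and you have simply carried out the corresponding column-extraction and coefficient-comparison computations explicitly. Your index bookkeeping in the middle chain (identifying $\frac{1}{\phi}\bLambda\bII_{\Omega}$ with $\bLambda(h_{ij})$ using the genuine $h_{21}$ entry) is in fact more careful than the printed display, which appears to transpose the off-diagonal entries.
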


\begin{prop}
Let $\bx: U \rightarrow \R^3$ be a proper frontal and $\bomega$ a tmb of $\bx$. Then, on $U \setminus \Sigma(\bx)$, we can write 
\begin{align*}
\cD_{1} &= \bLambda^{-1}\left(\widetilde{\Gamma}_{1}\bLambda - \bLambda_{u_1} \right),\\
\cD_{2} &=\bLambda^{-1}\left(\widetilde{\Gamma}_{2}\bLambda - \bLambda_{u_2} \right).
\end{align*}
	
\end{prop}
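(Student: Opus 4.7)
The plan is to pass from the two expansions of $\bx_{u_i u_j}$ — one via the connection $\widetilde{\Gamma}$ associated to $\bxi$ on the regular part, the other obtained by differentiating the tmb-decomposition $D\bx=\bomega\bLambda^{T}$ — and compare tangential coefficients in the basis $\{\w_1,\w_2\}$. Concretely, write $\bx_{u_i}=\sum_k \Lambda_{ik}\w_k$ (this is just the matrix identity $D\bx=\bomega\bLambda^{T}$ read row-wise). Differentiating with respect to $u_j$ and substituting the expansion of $(\w_k)_{u_j}$ from system (\ref{eqcompa}) gives
\begin{align*}
\bx_{u_i u_j} = \sum_{l}\Bigl[(\Lambda_{il})_{u_j}+\sum_k \Lambda_{ik}\,\cD^{l}_{kj}\Bigr]\w_l \;+\; \Bigl(\sum_k \Lambda_{ik}\,h_{kj}\Bigr)\bxi.
\end{align*}

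Then I would invoke the regular-case expansion (\ref{eqcompa0}) on $U\setminus\Sigma(\bx)$, which reads $\bx_{u_i u_j}=\sum_{m}\widetilde{\Gamma}^{m}_{ij}\bx_{u_m}+c_{ij}\bxi$; re-expressing $\bx_{u_m}=\sum_l \Lambda_{ml}\w_l$ rewrites this as $\sum_{l}\bigl(\sum_m \widetilde{\Gamma}^{m}_{ij}\Lambda_{ml}\bigr)\w_l+c_{ij}\bxi$. In $U\setminus\Sigma(\bx)$ the vectors $\w_1,\w_2,\bxi$ form a basis of $\R^3$, so equating the $\w_l$-coefficients from the two expressions above yields, for each $i,j,l$,
\begin{align*}
(\Lambda_{il})_{u_j}+\sum_k \Lambda_{ik}\,\cD^{l}_{kj} \;=\; \sum_m \widetilde{\Gamma}^{m}_{ij}\,\Lambda_{ml}.
\end{align*}

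The final step is to package this as a matrix identity. Using the conventions $(\bLambda)_{ik}=\Lambda_{ik}$, $(\cD_j)_{kl}=\cD^{l}_{kj}$ and $(\widetilde{\Gamma}_j)_{im}=\widetilde{\Gamma}^{m}_{ij}$ (matching the paper's formatting of $\cT_j$ and $\widetilde{\Gamma}_j$ with rows indexed by the first lower index and columns by the upper one), the scalar identity is precisely the $(i,l)$-entry of
\begin{align*}
\bLambda_{u_j}+\bLambda\,\cD_j \;=\; \widetilde{\Gamma}_j\,\bLambda.
\end{align*}
Since $\det\bLambda=\lambda_\Omega\neq 0$ on $U\setminus\Sigma(\bx)$, the matrix $\bLambda$ is invertible there and left-multiplying by $\bLambda^{-1}$ gives the claimed $\cD_j=\bLambda^{-1}(\widetilde{\Gamma}_j\bLambda-\bLambda_{u_j})$ for $j=1,2$.

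The only genuine care-point is bookkeeping of indices: the matrices $\cD_j$ and $\widetilde{\Gamma}_j$ store the connection symbols with their first lower index as the row and their upper index as the column, which is the convention that makes the composition $\bLambda\,\cD_j$ (rather than $\cD_j\,\bLambda$) come out correctly on the tangential side. Everything else is a direct double computation of $\bx_{u_iu_j}$ and linear independence of $\{\w_1,\w_2,\bxi\}$ off $\Sigma(\bx)$; no integrability condition is needed.
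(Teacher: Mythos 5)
Your proof is correct, and it takes a genuinely different route from the paper's. The paper derives the identity by starting from the explicit Gram-matrix formula $\Gamma_{1}=(D\bx_{u_1}^{T}D\bx)\,\mathbf{I}^{-1}$ for the Christoffel symbols of the first fundamental form, substituting $D\bx=\bomega\bLambda^{T}$, and then invoking the previously established relations $\widetilde{\Gamma}_{1}=\Gamma_{1}-\tfrac{1}{\phi}\bigl(\begin{smallmatrix} ae & be\\ af & bf\end{smallmatrix}\bigr)$ (proposition \ref{propcoef1}) and $\cD_{1}=\cT_{1}-\tfrac{1}{\phi}\bigl(\begin{smallmatrix} ae_{\Omega} & be_{\Omega}\\ af_{2\Omega} & bf_{2\Omega}\end{smallmatrix}\bigr)\bLambda$ (proposition \ref{propnew2}) to untangle the result. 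You instead bypass all of that: you compute $(\bx_{u_i})_{u_j}$ twice --- once by differentiating $\bx_{u_i}=\sum_k\Lambda_{ik}\w_k$ and inserting the expansion (\ref{eqcompa}) of $(\w_k)_{u_j}$, once via the structural equations (\ref{eqcompa0}) rewritten in the $\bomega$-basis --- and equate tangential coefficients using the linear independence of $\{\w_1,\w_2,\bxi\}$ off $\Sigma(\bx)$. Your index bookkeeping matches the paper's conventions ($(\cD_j)_{kl}=\cD^{l}_{kj}$, $(\widetilde{\Gamma}_j)_{im}=\widetilde{\Gamma}^{m}_{ij}$, $\bx_{u_i}=\sum_k\Lambda_{ik}\w_k$ from $D\bx=\bomega\bLambda^{T}$), so the matrix identity $\bLambda_{u_j}+\bLambda\cD_j=\widetilde{\Gamma}_j\bLambda$ and the conclusion follow. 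Your argument is more elementary and self-contained (it needs only the defining expansions, not the metric expression for $\Gamma_1$ nor propositions \ref{propcoef1}--\ref{propnew2}); what the paper's longer route buys is that the intermediate manipulation with $\Gamma_{1}=(\tfrac12\mathbf{I}_{u_1}+\tfrac12\mathbf{A}_1)\mathbf{I}^{-1}$ is exactly the computation reused in proposition \ref{propextensao} to characterize when $\cD_1,\cD_2$ extend across the singular set. As a bonus, comparing the $\bxi$-components in your computation recovers $(c_{ij})=\bLambda(h_{ij})$ for free.
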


\begin{proof}
	It is known that the Christoffel symbols for the decomposition in the basis $\lbrace \bx_{u_1}, \bx_{u_2}, \bn \rbrace$ are given by $\Gamma_{1} = \left(D\bx_{u_1}^{T}D\bx  \right)\bI^{-1}$ (see section 4.3 in \cite{do2016differential}) thus, by taking (\ref{propext}), we get
		\begin{align*}
	\widetilde{\Gamma}_{1} = \Gamma_{1} - \dfrac{1}{\phi}\begin{pmatrix}
	ae & be \\
	af & bf
	\end{pmatrix} &= (D\bx_{u_1}^{T}D\bx)\mathbf{I}^{-1} - \dfrac{1}{\phi}\begin{pmatrix}
	ae & be \\
	af & bf
	\end{pmatrix},\; \text{from $D\bx = \bomega\bLambda^{T}$, we have} \\ &= \left((\bLambda_{u_1}\bomega^{T} + \bLambda \bomega_{u_1}^{T}) \bomega \bLambda^{T} \right)(\bLambda^{T})^{-1}\mathbf{I}_{\Omega}\bLambda^{-1} - \dfrac{1}{\phi}\bLambda\begin{pmatrix}
	ae_{\Omega} & be_{\Omega}\\
	af_{2\Omega} & bf_{2\Omega}	\end{pmatrix}\\
	&= \left(\bLambda_{u_1}\bomega^{T}\bomega + \bLambda \bomega_{u_1}^{T}\bomega \right)(\bomega^{T}\bomega)^{-1} \bLambda^{-1} - \dfrac{1}{\phi} \bLambda\begin{pmatrix}
	ae_{\Omega} & be_{\Omega}\\
	af_{2\Omega} & bf_{2\Omega}	\end{pmatrix} \bLambda \bLambda^{-1}\\
	&= \left( \bLambda_{u_1} + \bLambda\left(\cT_{1} - \dfrac{1}{\phi}\begin{pmatrix}
	ae_{\Omega} & be_{\Omega}\\
	af_{2\Omega} & bf_{2\Omega}	\end{pmatrix} \bLambda\right) \right) \bLambda^{-1},\; \text{from (\ref{expd1}), we have}\\
	&= (\bLambda_{u_1} + \bLambda\cD_{1})\bLambda^{-1},
	\end{align*}
since $\cT_{1} = \bomega_{u_{1}}^{T} \bomega (\bomega^{T}\bomega)^{-1}$. From this, we obtain  $\cD_{1} = \bLambda^{-1}\left(\widetilde{\Gamma}_{1}\bLambda - \bLambda_{u_1} \right)$. Similarly, we prove the other case.
\end{proof}	

From now on, if $\mathbf{A} \in M_{n\times n}(\R)$, we denote by $\mathbf{A}_{(i)}$ the ith-row and by $\mathbf{A}^{(j)}$ the jth-column of $\mathbf{A}$.

\begin{prop}\label{propextensao}
Let $\bI, \bI_{\Omega}, \bLambda, \begin{pmatrix}
h_{ij}
\end{pmatrix}, \begin{pmatrix}
c_{ij}
\end{pmatrix}: U \rightarrow M_{2\times 2}(\R)$ be arbitrary smooth maps, such that $\bI_{\Omega}$ is symmetric and $i,j =1,2 $. Consider also $\lambda_{\Omega} = \det \bLambda$ and $\mathfrak{T}_{\Omega}$ the principal ideal generated by $\lambda_{\Omega}$ in the ring $C^{\infty}(U, \R)$. Suppose that $U \setminus \lambda_{\Omega}^{-1}(0)$ is an open dense set and that
\begin{align}\label{eqdecI}
\mathbf{I} &= \begin{pmatrix}
E & F\\
F & G
\end{pmatrix} = \bLambda \mathbf{I}_{\Omega}\bLambda^{T}\\
(c_{ij}) &=  \frac{1}{\phi} 	\begin{pmatrix}
e &f\\
f & g
\end{pmatrix} = \bLambda  \begin{pmatrix}
h_{ij}
\end{pmatrix}\nonumber,
\end{align}
where $\phi \in C^{\infty}(U, \R \setminus 0)$ and define on $U \setminus \lambda_{\Omega}^{-1}(0)$, $\widetilde{\Gamma}_{1}$ and $\widetilde{\Gamma}_{2}$ by (\ref{propext1}) and (\ref{propext2}), respectively. Then,
\begin{enumerate}[(a)]
	\item The map $
	\bLambda^{-1}\left(\widetilde{\Gamma}_{1}\bLambda - \bLambda_{u_1} \right): U \setminus \lambda_{\Omega}^{-1}(0) \to M_{2 \times 2}(\R)\label{ext1}
$
has a unique $C^{\infty}$ extension to $U$ if and only if,
\begin{align}
	\bLambda_{(1)u_{1}}^{\phantomsection}\mathbf{I}_\Omega\bLambda_{(2)}^T-\bLambda_{(1)}^{\phantomsection}\mathbf{I}_\Omega\bLambda_{(2)u_{1}}^T+E_{u_{2}}-F_{u_{1}} &\in \mathfrak{T}_\Omega\label{cli1}.
\end{align}

\item The map $
\bLambda^{-1}\left(\widetilde{\Gamma}_{2}\bLambda - \bLambda_{u_2} \right): U \setminus \lambda_{\Omega}^{-1}(0) \to M_{2 \times 2}(\R)\label{ext2}
$
has a unique $C^{\infty}$ extension to $U$ if and only if,
\begin{align*}
\bLambda_{(1)u_{2}}^{\phantomsection}\mathbf{I}_\Omega\bLambda_{(2)}^T-\bLambda_{(1)}^{\phantomsection}\mathbf{I}_\Omega\bLambda_{(2)u_{2}}^T+F_{u_{2}}-G_{u_{1}} &\in \mathfrak{T}_\Omega.
\end{align*}\label{itembb}
\end{enumerate}
\end{prop}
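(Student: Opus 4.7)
The plan is to prove part (a); part (b) follows by the same argument with $u_{1}$ and $u_{2}$ interchanged. Set $\mathbf{K}_{1}:=\bLambda^{-1}(\widetilde{\Gamma}_{1}\bLambda-\bLambda_{u_{1}})$; the goal is to isolate the single scalar obstruction to $\mathbf{K}_{1}$ extending smoothly across $\lambda_{\Omega}^{-1}(0)$ and match it with (\ref{cli1}).

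First I would absorb the auxiliary term of $\widetilde{\Gamma}_{1}$. From the hypothesis $(c_{ij})=\bLambda(h_{ij})$ together with $\bII=\bLambda\bII_{\Omega}$ underlying (\ref{decompmatrizaux}), the matrix $\begin{pmatrix}ae & be\\af & bf\end{pmatrix}$ factors as $\bLambda$ times something smooth; therefore the subtraction in (\ref{propext1}) contributes a matrix smooth on $U$ to $\bLambda^{-1}\widetilde{\Gamma}_{1}\bLambda$, and the problem reduces to whether $\bLambda^{-1}(\Gamma_{1}\bLambda-\bLambda_{u_{1}})$ extends smoothly.

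Next I would unfold $\Gamma_{1}$ using the classical first-kind matrix. With $\mathbf{J}:=\begin{pmatrix}0 & -1\\1 & 0\end{pmatrix}$, one checks directly that
\[
\mathbf{M}_{1}:=\begin{pmatrix}\tfrac12E_{u_{1}} & F_{u_{1}}-\tfrac12E_{u_{2}}\\ \tfrac12E_{u_{2}} & \tfrac12G_{u_{1}}\end{pmatrix}=\tfrac12\bI_{u_{1}}+\tfrac{E_{u_{2}}-F_{u_{1}}}{2}\mathbf{J},
\]
and $\Gamma_{1}=\mathbf{M}_{1}\bI^{-1}$. Substituting $\bI=\bLambda\bI_{\Omega}\bLambda^{T}$, the $2\times 2$ identity $\mathbf{A}\mathbf{J}\mathbf{A}^{T}=(\det\mathbf{A})\mathbf{J}$ gives $\bLambda^{-1}\mathbf{J}(\bLambda^{T})^{-1}=\lambda_{\Omega}^{-1}\mathbf{J}$, while the product rule applied to $\bI_{u_{1}}$ yields $\bLambda^{-1}\bI_{u_{1}}(\bLambda^{T})^{-1}=\mathbf{P}\bI_{\Omega}+(\bI_{\Omega})_{u_{1}}+\bI_{\Omega}\mathbf{P}^{T}$, where $\mathbf{P}:=\bLambda^{-1}\bLambda_{u_{1}}$. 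Writing $\mathbf{P}=\mathbf{Q}/\lambda_{\Omega}$ with $\mathbf{Q}:=\operatorname{adj}(\bLambda)\bLambda_{u_{1}}$ smooth on $U$, I would assemble these pieces, observe that the manifestly smooth contributions cancel out of the singular part, clear the common factor $\lambda_{\Omega}$, and multiply by $\bI_{\Omega}$ on the right to reach the criterion: $\mathbf{K}_{1}$ extends smoothly if and only if
\[
\bI_{\Omega}\mathbf{Q}^{T}-\mathbf{Q}\bI_{\Omega}+(E_{u_{2}}-F_{u_{1}})\mathbf{J}\ \in\ \lambda_{\Omega}\cdot M_{2\times 2}(C^{\infty}(U)).
\]

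The final, bookkeeping-heavy step collapses this matrix condition to a scalar one. Because $\bI_{\Omega}$ is symmetric, $\bI_{\Omega}\mathbf{Q}^{T}-\mathbf{Q}\bI_{\Omega}$ is skew-symmetric, so it equals $\alpha\mathbf{J}$ for a unique $\alpha\in C^{\infty}(U)$; substituting $\mathbf{Q}=\operatorname{adj}(\bLambda)\bLambda_{u_{1}}$ entrywise and using the symmetry of $\bI_{\Omega}$ identifies $\alpha=\bLambda_{(1)u_{1}}\bI_{\Omega}\bLambda_{(2)}^{T}-\bLambda_{(1)}\bI_{\Omega}\bLambda_{(2)u_{1}}^{T}$. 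Since $\mathbf{J}$ never vanishes, the matrix inclusion then becomes the single scalar condition $\alpha+E_{u_{2}}-F_{u_{1}}\in\mathfrak{T}_{\Omega}$, which is precisely (\ref{cli1}); uniqueness of the extension follows from the density of $U\setminus\lambda_{\Omega}^{-1}(0)$. The hard part is not conceptual but the careful tracking of denominators: one must keep the antisymmetric structure of the singular piece visible through every substitution so that the four a priori independent entrywise divisibility conditions indeed collapse to the single scalar condition stated.
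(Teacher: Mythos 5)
Your argument is correct and follows essentially the same route as the paper's proof: both reduce to the identity $\Gamma_{1}=(\tfrac12\bI_{u_1}+\tfrac12\mathbf{A}_1)\bI^{-1}$, substitute $\bI=\bLambda\bI_{\Omega}\bLambda^{T}$, use the skew-symmetry of the singular piece (equivalently $\bLambda\mathbf{J}\bLambda^{T}=\lambda_{\Omega}\mathbf{J}$) to collapse the matrix divisibility condition to the single scalar condition (\ref{cli1}), and invoke the density of $U\setminus\lambda_{\Omega}^{-1}(0)$. The only difference is presentational -- you conjugate by $\bLambda^{-1}(\cdot)(\bLambda^{T})^{-1}$ and read off the skew coefficient from $\operatorname{adj}(\bLambda)\bLambda_{u_1}$, whereas the paper conjugates by $\bLambda(\cdot)\bLambda^{T}$ and extracts the same scalar with row and column selectors -- and, like the paper, you implicitly use that $\bI_{\Omega}$ is invertible on all of $U$.
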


\begin{proof}
	\begin{enumerate}[(a)]
	\item Let us suppose $\cD_{1}$ the $C^{\infty}$ extension of $\bLambda^{-1}\left(\widetilde{\Gamma}_{1}\bLambda - \bLambda_{u_1} \right)$ to $U$, thus 
	\begin{align*}
	\bLambda \cD_{1} = \widetilde{\Gamma}_{1}\bLambda - \bLambda_{u_1}
	\end{align*}	
	on $U \setminus \lambda_{\Omega}^{-1}(0)$. From (\ref{propext1}), it follows that
	\begin{align*}
	\bLambda \cD_{1} = \Gamma_{1}\bLambda - \dfrac{1}{\phi}\begin{pmatrix}
	ae & be\\
	af & bf	\end{pmatrix} \bLambda - \bLambda_{u_1}.
	\end{align*} 
	It is known that $\Gamma_{1} = (\frac{1}{2}\mathbf{I}_{u_1}+\frac{1}{2}\mathbf{A}_1)\mathbf{I}^{-1}$, where $\mathbf{A}_{1} = \begin{pmatrix}
	0 & - \left(E_{v} - F_{u}  \right)\\
	E_{v} - F_{u} & 0
	\end{pmatrix}$. Hence
	\begin{align*}
	\bLambda \cD_{1} = (\frac{1}{2}\mathbf{I}_{u_1}+\frac{1}{2}\mathbf{A}_1)\mathbf{I}^{-1}\bLambda - \dfrac{1}{\phi}\begin{pmatrix}
	ae & be\\
	af & bf	\end{pmatrix} \bLambda - \bLambda_{u_1}.
	\end{align*} 
	Via (\ref{eqdecI}) we have an expression for $\mathbf{I}_{u_{1}}$, then multiplying the above expression by the right side with $2\mathbf{I}_{\Omega}\bLambda^{T}$ and taking into account (\ref{expd1}) and (\ref{decompmatrizaux}), we obtain that
	\begin{align}
	\bLambda\left( 2\cD_{1}\mathbf{I}_{\Omega} - \mathbf{I}_{\Omega u_{1}} + \dfrac{2}{\phi}\begin{pmatrix}
	\tilde{a}e_{\Omega} & \tilde{b}e_{\Omega} \\
	\tilde{a}f_{2\Omega} & \tilde{b}f_{2\Omega}
	\end{pmatrix} \mathbf{I}_{\Omega} \right) \bLambda^{T}= \bLambda\mathbf{I}_\Omega\bLambda_{u_1}^t-\bLambda_{u_1}\mathbf{I}_\Omega\bLambda^t+\mathbf{A}_1.\label{equi2}
	\end{align}
	Note that the right side of the above equation is a skew-symmetric matrix, which implies that $ 2\cD_{1}\mathbf{I}_{\Omega} - \mathbf{I}_{\Omega u_{1}} + \dfrac{2}{\phi}\begin{pmatrix}
	\tilde{a}e_{\Omega} & \tilde{b}e_{\Omega} \\
	\tilde{a}f_{2\Omega} & \tilde{b}f_{2\Omega}
	\end{pmatrix} \mathbf{I}_{\Omega}$ is a skew-symmetric matrix on $U \setminus \lambda_{\Omega}^{-1}(0)$, but since $U \setminus \lambda_{\Omega}^{-1}(0)$ is dense, this is also true on $U$. So, there is $\omega_{1} \in C^{\infty}(U,\R)$ such that 
	\begin{align*}
	2\cD_{1}\mathbf{I}_{\Omega} - \mathbf{I}_{\Omega u_{1}} + \dfrac{2}{\phi}\begin{pmatrix}
	\tilde{a}e_{\Omega} & \tilde{b}e_{\Omega} \\
	\tilde{a}f_{2\Omega} & \tilde{b}f_{2\Omega}
	\end{pmatrix} \mathbf{I}_{\Omega} = \begin{pmatrix}
	0 & -\omega_{1}\\
	\omega_{1} & 0 
	\end{pmatrix}.
	\end{align*}
	Multiplying (\ref{equi2}) by the left side with $\begin{pmatrix}
	1 & 0
	\end{pmatrix}$ and by the right side with $\begin{pmatrix}
	0 & 1
	\end{pmatrix}^{T}$, we get
	\begin{align*}
	-\omega_1\lambda_{\Omega}=\bLambda_{(1)}^{\phantomsection}\begin{pmatrix}
	0&-\omega_1\\
	\omega_1&0
	\end{pmatrix}\bLambda_{(2)}^T=\bLambda_{(1)}^{\phantomsection}\mathbf{I}_\Omega\bLambda_{(2)u_1}^T-\bLambda_{(1)u_1}^{\phantomsection}\mathbf{I}_\Omega\bLambda_{(2)}^T-(E_{u_2}-F_{u_{1}}) \in \mathfrak{T}_{\Omega}.
	\end{align*}
	Reciprocally, suppose (\ref{cli1}). Since $U \setminus \lambda_{\Omega}^{-1}(0)$ is a dense set, there is a unique $\omega_{1} \in C^{\infty}(U,\R)$ such that
	\begin{align*}
	\bLambda_{(1)u_{1}}^{\phantomsection}\mathbf{I}_\Omega\bLambda_{(2)}^T-\bLambda_{(1)}^{\phantomsection}\mathbf{I}_\Omega\bLambda_{(2)u_{1}}^T+E_{u_{2}}-F_{u_{1}} = \omega_1\lambda_{\Omega} \in \mathfrak{T}_\Omega.
	\end{align*}
	From the above expression, it follows that $\bLambda\mathbf{I}_\Omega\bLambda_{u_1}^t-\bLambda_{u_1}\mathbf{I}_\Omega\bLambda^t+\mathbf{A}_1 = \bLambda \begin{pmatrix}
	0 & -\omega_{1}\\
	\omega_{1} & 0
	\end{pmatrix}\bLambda^{T}$, since $\bLambda\mathbf{I}_\Omega\bLambda_{u_1}^t-\bLambda_{u_1}\mathbf{I}_\Omega\bLambda^t+\mathbf{A}_1$ is a skew-symmetric matrix. Define $\cD_{1}: U \rightarrow M_{2\times 2}(\R)$, given by
	\begin{align*}
	\cD_{1} = \frac{1}{2}\left(\mathbf{I}_{\Omega u_{1}} - \dfrac{2}{\phi}\begin{pmatrix}
	\tilde{a}e_{\Omega} & \tilde{b}e_{\Omega} \\
	\tilde{a}f_{2\Omega} & \tilde{b}f_{2\Omega}
	\end{pmatrix} \mathbf{I}_{\Omega} +  \begin{pmatrix}
	0 & -\omega_1\\
	\omega_1 & 0 
	\end{pmatrix} \right)\mathbf{I}_{\Omega}^{-1} ,
	\end{align*}
	thus
	\begin{align*}
	\bLambda\left( 2\cD_{1}\mathbf{I}_{\Omega} - \mathbf{I}_{\Omega u_{1}} + \dfrac{2}{\phi}\begin{pmatrix}
	\tilde{a}e_{\Omega} & \tilde{b}e_{\Omega} \\
	\tilde{a}f_{2\Omega} & \tilde{b}f_{2\Omega}
	\end{pmatrix} \mathbf{I}_{\Omega} \right) \bLambda^{T}= \bLambda\mathbf{I}_\Omega\bLambda_{u_1}^t-\bLambda_{u_1}\mathbf{I}_\Omega\bLambda^t+\mathbf{A}_1.\label{equi22}
	\end{align*}
	Taking into account (\ref{eqdecI}) and then the expression for $\mathbf{I}_{u_{1}}$ we show, using the above expression, that $\cD_{1} = 	\bLambda^{-1}\left(\widetilde{\Gamma}_{1}\bLambda - \bLambda_{u_1} \right)$ on $U \setminus \lambda_{\Omega}^{-1}(0)$. Since $\cD_{1}$ is smooth and $U \setminus \lambda_{\Omega}^{-1}(0)$ is dense, we have the result.
\end{enumerate}
Item (b) follows analogously by considering the matrix 
$\mathbf{A}_2:=\begin{pmatrix}
0&-(F_{u_2}-G_{u_1})\\
F_{u_2}-G_{u_1}&0
\end{pmatrix}$.
\end{proof}

In preparation for the Fundamental theorem, let us set the matrices $\mathbf{W} = \begin{pmatrix}
\w_{1} & \w_{2} & \bxi
\end{pmatrix} \in GL(3)$,
\begin{subequations}\label{eqaux1}
	\begin{align}
	\mathbf{D}_{1} &= \begin{pmatrix}
	\cD^1_{11} & \cD^2_{11} & h_{11} \\
	\cD^1_{21} & \cD^2_{21} & h_{21}\\
	-S^{1}_{1}  & -S^{2}_{1}  & 0
	\end{pmatrix}\\
	\mathbf{D}_{2} &= \begin{pmatrix}
	\cD^1_{12} & \cD^2_{12} & h_{12} \\
	\cD^1_{22} & \cD^2_{22} & h_{22}\\
	-S^{1}_{2}  & -S^{2}_{2}  & 0
	\end{pmatrix}.
	\end{align}	
\end{subequations}
Then, the system (\ref{eqcompa}) is represented by
	\begin{align}\label{sistmat}
	\begin{cases}
	\mathbf{W}_{u_1} &= \mathbf{W}\mathbf{D}_{1}^{T}\\
	\mathbf{W}_{u_2} &= \mathbf{W}\mathbf{D}_{2}^{T}.
	\end{cases}
	\end{align}	
It is known that the compatibility condition for the system (\ref{sistmat}) is $\mathbf{W}_{u_{1}u_{2}}^{T} = \mathbf{W}_{u_{2}u_{1}}^{T}$, from which we obtain
\begin{align*}
{\bD}_{1}\bD_{2}\mathbf{W}^{T} + \bD_{1u_{2}}\mathbf{W}^{T}  = \bD_{1}\mathbf{W}_{u_2}^{T} + \bD_{1u_{2}}\mathbf{W}^{T} = \bD_{2}\mathbf{W}_{u_1}^{T} + \bD_{2u_{1}}\mathbf{W}^{T} = \bD_{2}\bD_{1}\mathbf{W}^{T} + \bD_{2u_{1}}\mathbf{W}^{T},
\end{align*}
that is equivalent to
\begin{align*}
\left(\bD_{1}\bD_{2} + \bD_{1u_{2}} - \bD_{2}\bD_{1} - \bD_{2u_{1}}\right)\mathbf{W}^{T} = \bm{0}.
\end{align*}
Since $\mathbf{W} \in GL(3)$, we get
\begin{align*}
\bD_{1u_{2}} - \bD_{2u_{1}} + \left[ \bD_{1}, \bD_{2} \right] = \bm{0},
\end{align*}
where $\left[ \bD_{1}, \bD_{2} \right] =  \bD_{1} \bD_{2} -  \bD_{2} \bD_{1}$.

We have next two auxiliary lemmas, which play an important role in the proof of theorem \ref{teofundamental}.
\begin{lema}\label{lemasce}
	The integrability conditions for the system
	\begin{align}\label{sistemafrontal0}
	\begin{cases}
	\bx_{u_1} = \lambda_{11} \w_{1} + \lambda_{12}\w_{2}\\
	\bx_{u_2} = \lambda_{21} \w_{1} + \lambda_{22}\w_{2}\\
	\bx(q) = p
	\end{cases}
	\end{align}
	are 
	\begin{itemize}
	\item $\bLambda \begin{pmatrix}
	h_{11} & h_{12}\\
	h_{21} & h_{22}
	\end{pmatrix}$ is symmetric; 
	\item $\begin{pmatrix}
	0 & 1
	\end{pmatrix}\left(\bLambda \cD_{1} + \bLambda_{u_1}\right) = \begin{pmatrix}
	1 & 0
	\end{pmatrix}\left(\bLambda \cD_{2} + \bLambda_{u_2} \right). $
\end{itemize}
\end{lema}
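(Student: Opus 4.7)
The plan is to treat (\ref{sistemafrontal0}) as a linear first-order PDE system prescribing $D\bx$, where the moving basis $\bomega = (\w_1,\w_2)$ and the transversal vector field $\bxi$ are given and their derivatives are already controlled by (\ref{eqcompa}). By the Poincar\'e lemma for vector-valued $1$-forms (equivalently, Frobenius for a trivial distribution), the system admits a (unique) solution $\bx$ with $\bx(q)=p$ on a simply connected neighborhood of $q$ if and only if the mixed partials agree, $\bx_{u_1u_2}=\bx_{u_2u_1}$. My whole job will be to unpack this single vector identity into the two compatibility conditions in the statement.

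First I would differentiate $\bx_{u_1}=\lambda_{11}\w_1+\lambda_{12}\w_2$ with respect to $u_2$, substituting the expressions for $\w_{1u_2}$ and $\w_{2u_2}$ supplied by (\ref{eqcompa}); symmetrically I would differentiate $\bx_{u_2}=\lambda_{21}\w_1+\lambda_{22}\w_2$ with respect to $u_1$ using $\w_{1u_1}$ and $\w_{2u_1}$. Each side is a linear combination of $\w_1,\w_2,\bxi$, and since these three vectors are linearly independent, equating the two expansions is equivalent to equating the corresponding three scalar coefficients.

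The $\bxi$-coefficients yield $\lambda_{11}h_{12}+\lambda_{12}h_{22}=\lambda_{21}h_{11}+\lambda_{22}h_{21}$, which is precisely equality of the $(1,2)$- and $(2,1)$-entries of $\bLambda\begin{pmatrix} h_{ij}\end{pmatrix}$, i.e.\ the first listed compatibility. For the tangential components I would observe that, with $\cD_j$ as in (\ref{eqaux1}), the $i$-th row of $\bLambda\cD_j+\bLambda_{u_j}$ is exactly the pair of $\w_1,\w_2$-coefficients obtained by differentiating $\bx_{u_i}$ in the $u_j$ direction. Therefore the sought equality of the $\w_1$- and $\w_2$-components in $\bx_{u_1u_2}=\bx_{u_2u_1}$ is the equality between the second row of $\bLambda\cD_1+\bLambda_{u_1}$ and the first row of $\bLambda\cD_2+\bLambda_{u_2}$, which is exactly the second listed condition.

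Nothing here is a serious obstacle; the main care is bookkeeping, namely making sure the row/column conventions are consistent with $D\bx=\bomega\bLambda^T$ (so that the index $i$ of $\bx_{u_i}$ is a \emph{row} of $\bLambda$) and with the definitions of $\cD_1,\cD_2$. Once the forward direction is carried out, the converse is immediate: given the two stated conditions, reversing the argument recovers $\bx_{u_1u_2}=\bx_{u_2u_1}$, and the Poincar\'e lemma then supplies a unique $\bx$ on a simply connected neighborhood of $q$ with $\bx(q)=p$ and the prescribed differential.
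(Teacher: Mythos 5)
Your proposal is correct and follows essentially the same route as the paper: both reduce the solvability of the system to the cross-partial identity $\bx_{u_1u_2}=\bx_{u_2u_1}$, expand each side in the linearly independent frame $\w_1,\w_2,\bxi$ using (\ref{eqcompa}), and identify the $\bxi$-coefficient equation with the symmetry of $\bLambda\begin{pmatrix} h_{ij}\end{pmatrix}$ and the tangential coefficients with the row identity for $\bLambda\cD_j+\bLambda_{u_j}$. The paper merely packages the same computation in matrix form via $\mathbf{M}=\mathbf{W}\widetilde{\bLambda}^{T}$ and the system (\ref{sistmat}), so the difference is purely notational.
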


\begin{proof}
It is known that the integrability condition for the system (\ref{sistemafrontal0}) is $\bx_{u_1u_{2}} = \bx_{u_2u_{1}}$. If we set the matrices
\begin{align*}
\widetilde{\bLambda} = \begin{pmatrix}
\lambda_{11} & \lambda_{12} & 0\\
\lambda_{21} & \lambda_{22} & 0\\
0 & 0 & 1
\end{pmatrix}\; \text{and}\; \mathbf{M} = \begin{pmatrix}
\bx_{u_1} & \bx_{u_2} & \bxi
\end{pmatrix},
\end{align*}	
then $\mathbf{M} = \mathbf{W}\widetilde{\bLambda}^{T}$. Hence, the integrability condition is $\mathbf{M}_{u_1}\mathbf{\hat{j}} = \mathbf{M}_{u_2}\mathbf{\hat{i}}$, where $\lbrace \mathbf{\hat{i}}, \mathbf{\hat{j}}, \mathbf{\hat{k}} \rbrace$ is the standard basis of $\R^3$. By using (\ref{sistmat}), $\mathbf{M} = \mathbf{W}\widetilde{\bLambda}^{T}$ and $\mathbf{M}_{u_1}\mathbf{\hat{j}} = \mathbf{M}_{u_2}\mathbf{\hat{i}}$, we obtain
\begin{align*}
\bW\bD_{1}^T\widetilde{\bLambda}^{T}\mathbf{\hat{j}} + \bW\widetilde{\bLambda}_{u_{1}}^T\mathbf{\hat{j}}=\bW_{u_{1}}\widetilde{\bLambda}^T\mathbf{\hat{j}}+\bW\widetilde{\bLambda}_{u_1}^T\mathbf{\hat{j}}=\bW_{u_{2}}\widetilde{\bLambda}^T\mathbf{\hat{i}}+\bW\widetilde{\bLambda}_{u_{2}}^T\mathbf{\hat{i}}=\bW\bD_{2}^T\widetilde{\bLambda}^T\mathbf{\hat{i}}+\bW\widetilde{\bLambda}_{u_{2}}^T\mathbf{\hat{i}},
\end{align*} 
so, the integrability condition is equivalent to $\bD_{1}^T\widetilde{\bLambda}^T\mathbf{\hat{j}} + \widetilde{\bLambda}_{u_{1}}^T\mathbf{\hat{j}} = \bD_{2}^T\widetilde{\bLambda}^T\mathbf{\hat{i}}+\widetilde{\bLambda}_{u_{2}}^T\mathbf{\hat{i}}$. By taking each component of this expression we have
\begin{align}
	&\lambda_{11u_{2}}-\lambda_{21u_{1}}=\cD_{11}^{1}\lambda_{21}+\cD_{21}^{1}\lambda_{22}-\cD_{12}^{1}\lambda_{11}-\cD_{22}^{1}\lambda_{12}\label{CS1}\\
	&\lambda_{12u_{2}}-\lambda_{22u_{1}}=\cD_{11}^{2}\lambda_{21}+\cD_{21}^{2}\lambda_{22}-\cD_{12}^{2}\lambda_{11}-\cD_{22}^{2}\lambda_{12}\label{CS2}\\
	&\lambda_{11}h_{12}+\lambda_{12}h_{22}=\lambda_{21}h_{11}+\lambda_{22}h_{21}\label{CS3}.
\end{align}
Finally, note that (\ref{CS1}) and (\ref{CS2}) are equivalent to $\begin{pmatrix}
0 & 1
\end{pmatrix}\left(\bLambda \cD_{1} + \bLambda_{u_1}\right) = \begin{pmatrix}
1 & 0
\end{pmatrix}\left(\bLambda \cD_{2} + \bLambda_{u_2} \right)$ and (\ref{CS3}) is equivalent to say that $\bLambda \begin{pmatrix}
h_{11} & h_{12}\\
h_{21} & h_{22}
\end{pmatrix}$ is symmetric.
\end{proof}	

\begin{lema}{\rm{({\cite{alexandro2019fundamental}, Lemma 5.2})}}\label{lema5.2}
		If we have
		\begin{align*}
	\widetilde{\bLambda}	\mathbf{D}_{1} =	\mathbf{\widetilde{\Gamma}}_{1}\widetilde{\bLambda} - \widetilde{\bLambda}_{u_1}\; \text{and}\;		\widetilde{\bLambda}\mathbf{D}_{2} =	\mathbf{\widetilde{\Gamma}}_{2}\widetilde{\bLambda} - \widetilde{\bLambda}_{u_2},
	\end{align*}
	in which $\widetilde{\bLambda},\mathbf{D}_{1},\mathbf{D}_{2}:U \to M_{n\times n}(\R)$ and $\mathbf{\widetilde{\Gamma}}_{1},\mathbf{\widetilde{\Gamma}}_{2}:U\setminus \lambda_{\Omega}^{-1}(0) \to M_{n\times n}(\R)$ are smooth maps with $int(\lambda_{\Omega}^{-1}(0))=\emptyset$, where $\lambda_{\Omega} = \det \bLambda$. Then,\\
	
	$\mathbf{\widetilde{\Gamma}}_{1u_{2}} - \mathbf{\widetilde{\Gamma}}_{2u_{1}} + \left[ \mathbf{\widetilde{\Gamma}}_{1}, \mathbf{\widetilde{\Gamma}}_{2} \right] = \bm{0}$ is equivalent to $\bD_{1u_{2}} - \bD_{2u_{1}} + \left[ \bD_{1}, \bD_{2} \right] = \bm{0}$ on $U$. 
	\end{lema}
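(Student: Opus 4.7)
The plan is to reduce both identities to a single algebraic relation on the open dense set $U\setminus\lambda_\Omega^{-1}(0)$, where $\widetilde{\bLambda}$ is invertible, and then conclude by a density argument.

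First I would observe that each hypothesis $\widetilde{\bLambda}\bD_i = \mathbf{\widetilde{\Gamma}}_i\widetilde{\bLambda} - \widetilde{\bLambda}_{u_i}$ can be read in two ways: on $U\setminus\lambda_\Omega^{-1}(0)$ it determines $\bD_i = \widetilde{\bLambda}^{-1}(\mathbf{\widetilde{\Gamma}}_i\widetilde{\bLambda} - \widetilde{\bLambda}_{u_i})$, and equivalently $\widetilde{\bLambda}_{u_i} = \mathbf{\widetilde{\Gamma}}_i\widetilde{\bLambda} - \widetilde{\bLambda}\bD_i$. This second form is the substitution I will use whenever a first derivative of $\widetilde{\bLambda}$ shows up.

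The main step is a direct computation. I would differentiate $\widetilde{\bLambda}\bD_1 = \mathbf{\widetilde{\Gamma}}_1\widetilde{\bLambda} - \widetilde{\bLambda}_{u_1}$ with respect to $u_2$, differentiate $\widetilde{\bLambda}\bD_2 = \mathbf{\widetilde{\Gamma}}_2\widetilde{\bLambda} - \widetilde{\bLambda}_{u_2}$ with respect to $u_1$, subtract, and on the right rewrite every $\mathbf{\widetilde{\Gamma}}_i\widetilde{\bLambda}_{u_j}$ using $\widetilde{\bLambda}_{u_j} = \mathbf{\widetilde{\Gamma}}_j\widetilde{\bLambda} - \widetilde{\bLambda}\bD_j$. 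The target is the identity
\begin{equation*}
\widetilde{\bLambda}\bigl(\bD_{1u_{2}} - \bD_{2u_{1}} + [\bD_{1}, \bD_{2}]\bigr) \;=\; \bigl(\mathbf{\widetilde{\Gamma}}_{1u_{2}} - \mathbf{\widetilde{\Gamma}}_{2u_{1}} + [\mathbf{\widetilde{\Gamma}}_{1}, \mathbf{\widetilde{\Gamma}}_{2}]\bigr)\widetilde{\bLambda}
\end{equation*}
valid on $U\setminus\lambda_\Omega^{-1}(0)$. The cross-terms $\widetilde{\bLambda}_{u_2}\bD_1$ and $\widetilde{\bLambda}_{u_1}\bD_2$ produced by the product rule should cancel against the pieces coming from the rewriting of the $\mathbf{\widetilde{\Gamma}}_i\widetilde{\bLambda}_{u_j}$, while the remaining terms reassemble into the two commutators. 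I expect this bookkeeping to be the only genuinely delicate part; no conceptual obstacle is involved.

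Once this identity is in hand, the equivalence follows quickly. Invertibility of $\widetilde{\bLambda}$ on $U\setminus\lambda_\Omega^{-1}(0)$ shows that the two sides vanish simultaneously on this open dense set. If the $\mathbf{\widetilde{\Gamma}}$-side vanishes on its whole domain, then the $\bD$-side is zero on $U\setminus\lambda_\Omega^{-1}(0)$; since $\bD_{1u_2} - \bD_{2u_1} + [\bD_1, \bD_2]$ is smooth on $U$ and $\mathrm{int}(\lambda_\Omega^{-1}(0))=\emptyset$, it must vanish on all of $U$. Conversely, if the $\bD$-side vanishes on $U$ then it vanishes in particular on $U\setminus\lambda_\Omega^{-1}(0)$, and multiplying the displayed identity by $\widetilde{\bLambda}^{-1}$ on both sides forces the $\mathbf{\widetilde{\Gamma}}$-side to vanish there, which closes the equivalence.
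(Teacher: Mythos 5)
Your argument is correct and complete: differentiating the two hypotheses, subtracting, and substituting $\widetilde{\bLambda}_{u_j} = \mathbf{\widetilde{\Gamma}}_j\widetilde{\bLambda} - \widetilde{\bLambda}\bD_j$ does make the cross-terms $\widetilde{\Gamma}_2\widetilde{\bLambda}\bD_1$ and $-\widetilde{\Gamma}_1\widetilde{\bLambda}\bD_2$ cancel on both sides, leaving exactly the intertwining identity $\widetilde{\bLambda}\bigl(\bD_{1u_2}-\bD_{2u_1}+[\bD_1,\bD_2]\bigr) = \bigl(\mathbf{\widetilde{\Gamma}}_{1u_2}-\mathbf{\widetilde{\Gamma}}_{2u_1}+[\mathbf{\widetilde{\Gamma}}_1,\mathbf{\widetilde{\Gamma}}_2]\bigr)\widetilde{\bLambda}$, after which your density argument closes the equivalence. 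Note that the paper itself gives no proof of this lemma (it is imported from the cited reference), so there is nothing to compare against; your route is the natural one. The only point worth making explicit is that the invertibility of $\widetilde{\bLambda}$ off $\lambda_\Omega^{-1}(0)$, which your argument uses in both directions, is the intended reading of the hypothesis $\lambda_\Omega=\det\bLambda$ (in the application $\det\widetilde{\bLambda}=\lambda_\Omega$).
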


\begin{teo}\label{teofundamental}
	Let $\widetilde{\Gamma}^{k}_{ij}, b^{i}_{j},c_{ij},\phi \in C^{\infty}(U, \R)$, such that $\phi \neq 0$. Suppose that $\begin{pmatrix}
		c_{ij}
	\end{pmatrix} = \begin{pmatrix}
		\lambda_{ij}
	\end{pmatrix}  \begin{pmatrix}
		h_{ij} 
	\end{pmatrix},\;
\label{eqaux12}
\begin{pmatrix}
	S_{i}^{j}
\end{pmatrix} =  \begin{pmatrix}
	b_{i}^{j} 
\end{pmatrix} \begin{pmatrix}
	\lambda_{ij}
\end{pmatrix}$ and
	\begin{align}\label{eqaux11}
	\begin{pmatrix}
	E & F \\
	F & G
	\end{pmatrix}  &= \begin{pmatrix}
	\lambda_{11} & \lambda_{12}\\
	\lambda_{21} & \lambda_{22}
	\end{pmatrix} \begin{pmatrix}
	E_{\Omega} & F_{\Omega}\\
	F_{\Omega} & G_{\Omega}
	\end{pmatrix} \begin{pmatrix}
	\lambda_{11} & \lambda_{12}\\
	\lambda_{21} & \lambda_{22}
	\end{pmatrix}^{T}
		\end{align}
	where all the components above are $C^{\infty}$ functions defined on $U$, $E_{\Omega}, G_{\Omega} \geq 0$, $E_{\Omega}G_{\Omega} - F_{\Omega}^2 > 0$, $\lambda_{\Omega} = \det \bLambda$ and $U \setminus \lambda_{\Omega}^{-1}(0)$ is a dense set, for $\bLambda = \begin{pmatrix}
	\lambda_{ij}
	\end{pmatrix}$. Suppose also that the compatibility equations for the system (\ref{eqcompa0}) are satisfied on $U \setminus \lambda_{\Omega}^{-1}(0)$ and that \begin{subequations}\label{condext}
		\begin{align}
		\bLambda_{(1)u_{1}}^{\phantomsection}\mathbf{I}_\Omega\bLambda_{(2)}^T-\bLambda_{(1)}^{\phantomsection}\mathbf{I}_\Omega\bLambda_{(2)u_{1}}^T+E_{u_{2}}-F_{u_{1}} &\in \mathfrak{T}_\Omega\label{cli11}\\
		\bLambda_{(1)u_{2}}^{\phantomsection}\mathbf{I}_\Omega\bLambda_{(2)}^T-\bLambda_{(1)}^{\phantomsection}\mathbf{I}_\Omega\bLambda_{(2)u_{2}}^T+F_{u_{2}}-G_{u_{1}} &\in \mathfrak{T}_\Omega\label{cli22},
		\end{align}
	\end{subequations} where $\mathfrak{T}_\Omega$ is the principal ideal generated by $\lambda_{\Omega}$ in the ring $C^{\infty}(U, \R)$ and $\bI_{\Omega} = \begin{pmatrix}
E_{\Omega} & F_{\Omega}\\
F_{\Omega} & G_{\Omega}
\end{pmatrix}$.  Then,
	
	\begin{enumerate}[(a)]
		\item For each $q \in U$, there exists a neighborhood $V \subset U$ of $q$, a frontal $\bx: V \rightarrow \R^3$ with tmb $\bomega$, such that $D\bx = \bomega \bLambda^{T}$, and an equiaffine transversal vector field $\bxi: V \rightarrow \R^3$ with associated equiaffine structure given by $\begin{pmatrix} h_{ij}
		\end{pmatrix}, \cD_{1}, \cD_{2}$, where
		\begin{align*}
		\cD_{1} &= \begin{pmatrix}
		\cD^1_{11} & \cD^2_{11} \\
		\cD^1_{21} & \cD^2_{21}
		\end{pmatrix}\\
		\cD_{2} &= \begin{pmatrix}
		\cD^1_{12} & \cD^2_{12}\\
		\cD^1_{22} & \cD^2_{22}
		\end{pmatrix},
		\end{align*}
		are the unique $C^{\infty}$ extensions of  $	\bLambda^{-1}\left(\widetilde{\Gamma}_{1}\bLambda - \bLambda_{u_1} \right)$ and $	\bLambda^{-1}\left(\widetilde{\Gamma}_{2}\bLambda - \bLambda_{u_2} \right)$ to $U$, respectively. 
		
		\item If moreover, we suppose $\det \begin{pmatrix}
			c_{ij}
		\end{pmatrix} \neq 0$ on $V \setminus \lambda_{\Omega}^{-1}(0)$ and that the condition $\nabla\omega_{\mathbf{c}} = 0$ is satisfied on $V \setminus \lambda_{\Omega}^{-1}(0)$, where $\omega_{\mathbf{c}}$ is the volume element induced by the affine fundamental form $\mathbf{c}$, then there is a volume element $\omega$ in $\R^3$ such that $\bxi$ is the Blaschke vector field of the frontal $\bx$.
		
		\item Let $U$ be connected. Suppose that $\widetilde{\bx}: U \rightarrow \R^3$ is another proper frontal, $\widetilde{\bxi}$ an equiaffine transversal vector field and $\widetilde{\bomega}$ a tmb satisfying the same conditions that were obtained in (a). Then, $\bx$ and $\widetilde{\bx}$ are affinely equivalent.
	\end{enumerate}
\end{teo}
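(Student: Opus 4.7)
The plan is to follow the three-stage construction suggested by Lemma \ref{lema5.2} and Proposition \ref{propextensao}: first extend the connection symbols smoothly across the singular set, then solve a linear PDE system for the moving basis together with the transversal vector field, and finally integrate once more to recover the frontal $\bx$. The strategy throughout is to verify integrability on the dense open set $U \setminus \lambda_{\Omega}^{-1}(0)$ and extend the identities to $U$ by continuity.

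For part (a), I first invoke Proposition \ref{propextensao} together with the extension hypotheses (\ref{cli11})--(\ref{cli22}) to produce unique smooth maps $\cD_1, \cD_2 : U \to M_{2\times 2}(\R)$ that extend $\bLambda^{-1}\!\left(\widetilde{\Gamma}_i \bLambda - \bLambda_{u_i}\right)$ from $U \setminus \lambda_{\Omega}^{-1}(0)$ to all of $U$. I then assemble the $3 \times 3$ matrices $\bD_1, \bD_2$ of (\ref{eqaux1}) from $\cD_i$, the $h_{ij}$, and the $S_j^i$. The compatibility equations for (\ref{eqcompa0}), valid by assumption on $U \setminus \lambda_{\Omega}^{-1}(0)$, amount (via Propositions \ref{propcoef1} and \ref{propnew2}) to the classical Gauss--Codazzi--Ricci identities $\widetilde{\Gamma}_{1u_2} - \widetilde{\Gamma}_{2u_1} + [\widetilde{\Gamma}_1, \widetilde{\Gamma}_2] = 0$ together with the transversal relations packaged inside the enlarged $3 \times 3$ matrices. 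Lemma \ref{lema5.2} upgrades these to $\bD_{1u_2} - \bD_{2u_1} + [\bD_1, \bD_2] = 0$ on all of $U$, which is precisely the Frobenius integrability condition for the linear system (\ref{sistmat}). Fixing $q \in U$ and any initial value $\mathbf{W}_0 \in GL(3, \R)$, I solve to obtain $\mathbf{W} = (\w_1 \mid \w_2 \mid \bxi)$ on a neighborhood $V$ of $q$; shrinking $V$ if necessary, $\mathbf{W}(u)$ remains invertible on $V$, so $\bomega := (\w_1 \mid \w_2)$ is a candidate tmb and $\bxi$ is transversal to $\langle \w_1, \w_2 \rangle_{\R}$. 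Next I integrate the auxiliary system $\bx_{u_i} = \lambda_{i1}\w_1 + \lambda_{i2}\w_2$ of Lemma \ref{lemasce}; its two integrability conditions hold because symmetry of $\bLambda (h_{ij})$ is immediate from hypothesis (\ref{eqaux11}), and the row identity on $\bLambda \cD_i + \bLambda_{u_i}$ holds on $U \setminus \lambda_{\Omega}^{-1}(0)$ by compatibility of the $\widetilde{\Gamma}$-system and extends to $U$ by density. The resulting $\bx : V \to \R^3$ satisfies $D\bx = \bomega \bLambda^T$, so $\bx$ is a frontal with tmb $\bomega$; the bottom rows of $\bD_i$ (encoding $\bxi_{u_i}$ with no $\bxi$ component) yield $\tau_{\Omega} \equiv 0$, so by Definition \ref{defequiafim} the field $\bxi$ is equiaffine transversal with the prescribed structure $(h_{ij}, \cD_1, \cD_2)$.

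For part (b), the non-degeneracy $eg - f^2 \neq 0$ on $V \setminus \lambda_{\Omega}^{-1}(0)$ guarantees that the affine fundamental form $\mathbf{c}$ is a non-degenerate metric on the regular part. The hypothesis $\nabla \omega_{\mathbf{c}} = 0$, combined with Proposition \ref{prop1.4}, is exactly condition (b) of Definition \ref{defBlaschkeusual}, so $\bxi|_{V \setminus \lambda_{\Omega}^{-1}(0)}$ coincides with the usual Blaschke vector field of the regular surface $\bx(V \setminus \lambda_{\Omega}^{-1}(0))$. Smoothness of $\bxi$ on $V$ and Definition \ref{defBlaschke} then identify $\bxi$ as the Blaschke vector field of the frontal $\bx$. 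For part (c), given a second system $\widetilde{\bx}, \widetilde{\bomega}, \widetilde{\bxi}$ producing the same conclusions on the connected $U$, the matrix $\widetilde{\mathbf{W}} := (\widetilde{\w}_1 \mid \widetilde{\w}_2 \mid \widetilde{\bxi})$ solves the same linear system $\widetilde{\mathbf{W}}_{u_i} = \widetilde{\mathbf{W}} \bD_i^T$ as $\mathbf{W}$, since the $\bD_i$ are determined by the prescribed structural symbols. Fixing $q_0 \in U$ and setting $\mathbf{A} := \widetilde{\mathbf{W}}(q_0)\, \mathbf{W}(q_0)^{-1} \in GL(3,\R)$, the two curves $u \mapsto \mathbf{A}\mathbf{W}(u)$ and $u \mapsto \widetilde{\mathbf{W}}(u)$ solve the same linear Cauchy problem on connected $U$, hence agree. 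In particular $\widetilde{\bomega} = \mathbf{A}\bomega$, so $D\widetilde{\bx} = \widetilde{\bomega}\bLambda^T = \mathbf{A}\, D\bx = D(\mathbf{A}\bx)$, and connectedness yields $\widetilde{\bx} = \mathbf{A}\bx + \mathbf{b}$ for some $\mathbf{b} \in \R^3$, i.e.\ affine equivalence.

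The main obstacle is the orchestration of the compatibility argument: classical integrability is assumed only on $U \setminus \lambda_{\Omega}^{-1}(0)$, yet the linear system for $\mathbf{W}$ and the subsequent $\bx$-system must be integrable on all of $U$. Lemma \ref{lema5.2} and Proposition \ref{propextensao} are tailor-made for this transition once the smooth extensions $\cD_i$ are available, but the matching of bracket identities inside the enlarged $3 \times 3$ matrices against the classical equations of Propositions \ref{propcoef1} and \ref{propnew2} requires careful bookkeeping; a parallel density argument is what turns the row-matching identity of Lemma \ref{lemasce} from a fact on $U \setminus \lambda_{\Omega}^{-1}(0)$ into one on all of $U$. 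All other steps reduce to standard Frobenius-type integration of linear PDE systems.
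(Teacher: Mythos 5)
Your proposal for parts (a) and (c) follows essentially the same route as the paper: Proposition \ref{propextensao} produces the extensions $\cD_1,\cD_2$, Lemma \ref{lema5.2} transfers the compatibility condition to the enlarged system $\bD_{1u_2}-\bD_{2u_1}+[\bD_1,\bD_2]=0$, Frobenius gives $\mathbf{W}$, and Lemma \ref{lemasce} (with the density argument for the row identity) gives $\bx$. In (c) you argue via uniqueness of solutions of the linear system $\mathbf{W}_{u_i}=\mathbf{W}\bD_i^T$ to get a constant $\mathbf{A}$ with $\widetilde{\mathbf{W}}=\mathbf{A}\mathbf{W}$, whereas the paper differentiates the pointwise isomorphism $L_q$ and shows $\partial L/\partial u_i$ annihilates the frame; these are equivalent formulations of the same uniqueness fact, and yours is arguably the cleaner packaging.

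In part (b), however, there is a genuine (if small) gap: you assert that $\nabla\omega_{\mathbf{c}}=0$ ``is exactly condition (b) of Definition \ref{defBlaschkeusual}.'' It is not. Condition (b) demands that the volume element $\theta$ \emph{induced by $\bxi$} coincide with $\omega_{\mathbf{c}}$, while $\nabla\omega_{\mathbf{c}}=0$ only says $\omega_{\mathbf{c}}$ is parallel. The missing step is the one that justifies the phrase ``there is a volume element $\omega$ in $\R^3$'' in the statement: since $\bxi$ is equiaffine, the induced $\theta_1$ is parallel by Proposition \ref{prop1.4}, and a parallel volume element is unique up to a positive constant, so $\omega_{\mathbf{c}}=\mu\theta_1$ for some constant $\mu>0$; one must then replace the ambient volume form $\omega_1$ by $\omega=\mu\omega_1$ so that the newly induced $\theta$ equals $\omega_{\mathbf{c}}$, after which $\bxi$ is the usual Blaschke field on the regular part and Definition \ref{defBlaschke} applies. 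Without constructing this rescaled $\omega$, the identification of $\bxi$ with the Blaschke vector field does not follow.
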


\begin{proof}
	\hfill \begin{enumerate}[(a)]
		\item It follows from (\ref{condext}) and from proposition (\ref{propextensao}) that the maps
		\begin{align*}
		\bLambda^{-1}\left(\widetilde{\Gamma}_{1}\bLambda - \bLambda_{u_1} \right)\; \text{and}\; \bLambda^{-1}\left(\widetilde{\Gamma}_{2}\bLambda - \bLambda_{u_2} \right),
				\end{align*}
		defined on $U \setminus \lambda_{\Omega}^{-1}(0)$, admit unique $C^{\infty}$ extensions to $U$, ${\cD}_{1}$ e ${\cD}_{2}$, respectively. Thus, on $U \setminus \lambda_{\Omega}^{-1}(0)$, we have
		\begin{align*}
		\cD_{1} &=	\bLambda^{-1}\left(\widetilde{\Gamma}_{1}\bLambda - \bLambda_{u_1} \right),\\
		\cD_{2} &=	\bLambda^{-1}\left(\widetilde{\Gamma}_{2}\bLambda - \bLambda_{u_2} \right).
		\end{align*}	
		By using $\cD_{1}$ and $\cD_{2}$, we can build the matrices $\mathbf{D}_{1}$ and $\mathbf{D}_{2}$ as the matrices (\ref{eqaux1}). Then, using (\ref{eqaux11}) and (\ref{eqaux12}) we have that
		\begin{align*}
	\widetilde{\bLambda}	\mathbf{D}_{1} =	\mathbf{\widetilde{\Gamma}}_{1}\widetilde{\bLambda} - \widetilde{\bLambda}_{u_1}\; \text{and}\;		\widetilde{\bLambda}\mathbf{D}_{2} =	\mathbf{\widetilde{\Gamma}}_{2}\widetilde{\bLambda} - \widetilde{\bLambda}_{u_2},
		\end{align*}	
		where 
		\begin{align*}
		\mathbf{\widetilde{\Gamma}}_{1} &= \begin{pmatrix}
		\widetilde{\Gamma}^{1}_{11} & \widetilde{\Gamma}^{2}_{11} & c_{11}\\
		\widetilde{\Gamma}^{1}_{21} & \widetilde{\Gamma}^{2}_{21} & c_{12}\\
		-b^{1}_{1} & -b^{2}_{1} & 0
		\end{pmatrix}
		\end{align*}
		\begin{align*}
		\mathbf{\widetilde{\Gamma}}_{2} &= \begin{pmatrix}
		\widetilde{\Gamma}^{1}_{12} & \widetilde{\Gamma}^{2}_{12} & c_{21}\\
		\widetilde{\Gamma}^{1}_{22} & \widetilde{\Gamma}^{2}_{22} &c_{22}\\
		-b_{2}^{1}  & -b^{2}_{2} & 0
		\end{pmatrix}\\
		\widetilde{\bLambda} &= \begin{pmatrix}
		\lambda_{11} & \lambda_{12} & 0 \\
		\lambda_{21} & \lambda_{22} & 0\\
		0 & 0 & 1
		\end{pmatrix}.
		\end{align*}
				Let us consider the following system of PDE
		\begin{subequations}\label{sistema54}
			\begin{align}
			\mathbf{W}_{u_1} &= \mathbf{W}\bD_{1}^{T}\\
			\mathbf{W}_{u_2} &= \mathbf{W}\bD_{2}^{T}\\
			\mathbf{W}(q) &= \begin{pmatrix}
			\mathbf{v_{1}} & \mathbf{v_{2}} & \mathbf{v_{3}}
			\end{pmatrix},
			\end{align}
			where $\mathbf{v_{1}}, \mathbf{v_{2}}, \mathbf{v_{3}}$ are linearly independent vectors on $\R^3$ and $q \in U$ is a fixed point.
		\end{subequations}	
		The compatibility conditions for the system (\ref{eqcompa0}) are expressed by $\mathbf{\widetilde{\Gamma}}_{1u_{2}} - \mathbf{\widetilde{\Gamma}}_{2u_{1}} + \left[ \mathbf{\widetilde{\Gamma}}_{1}, \mathbf{\widetilde{\Gamma}}_{2} \right] = \bm{0}$ and by hypothesis they are satisfied, so it follows from lemma \ref{lema5.2} that $\bD_{1u_{2}} - \bD_{2u_{1}} + \left[ \bD_{1}, \bD_{2} \right] = \bm{0}$, which is equivalent to the compatibility conditions for the system (\ref{sistema54}) (see \ref{sistmat}). Thus, this system has a unique solution $\mathbf{W}: V \rightarrow GL(3)$, where $V \subset U$ is a neighborhood of $q$. If $\mathbf{W} = \begin{pmatrix}
		\w_{1} & \w_{2} & \bxi
		\end{pmatrix}$, it follows that the vector field $\bxi$ is transversal to $\langle \w_{1}, \w_{2} \rangle_{\R}$ and that $\bxi_{u_{i}} \in \langle \w_{1}, \w_{2} \rangle_{\R}$, $i=1,2$. Now, let us take the following system of PDE
		\begin{align}\label{sistemafrontal}
			\begin{cases}
			\bx_{u_1} = \lambda_{11} \w_{1} + \lambda_{12}\w_{2}\\
			\bx_{u_2} = \lambda_{21} \w_{1} + \lambda_{22}\w_{2}\\
			\bx(q) = p,
			\end{cases}
		\end{align}
		for a fixed $p \in \R^3$. Note that 
		\begin{align*}
		\begin{pmatrix}
		0 & 1
		\end{pmatrix}\left(\bLambda \cD_{1} + \bLambda_{u_1}\right) = \begin{pmatrix}
		0 & 1
		\end{pmatrix} \widetilde{\Gamma}_{1} \bLambda = \begin{pmatrix}
		1 & 0
		\end{pmatrix}\widetilde{\Gamma}_{2} \bLambda  = \begin{pmatrix}
		1 & 0
		\end{pmatrix}\left(\bLambda \cD_{2} + \bLambda_{u_2} \right)
		\end{align*}
		on $\lambda_{\Omega}^{-1}(0)^{c}$, so, via density the equality holds on $U$. The above equality, together with the fact that $\bLambda \begin{pmatrix}
		h_{11} & h_{12}\\
		h_{21} & h_{22}
		\end{pmatrix}$ is symmetric, means that the system (\ref{sistemafrontal}) has a unique solution $\bx: \widetilde{V} \rightarrow \R^3$, where $ \widetilde{V} \subset V$ is a neighborhood of $q$ (see lemma \ref{lemasce}). As $D\bx = \bomega \bLambda^{T}$, where $\bomega = \begin{pmatrix}
		\w_{1} & \w_{2}
		\end{pmatrix}$ it follows that $\bx$ is a frontal for which $\bomega$ is a tmb, $\bxi$ is an equiaffine transversal vector field and the equiaffine structure is the desired one.
		
		\item Moreover, from $\det\begin{pmatrix}{c_ij} \end{pmatrix} \neq 0$ on $V \setminus \lambda_{\Omega}^{-1}(0)$ we obtain that $\mathbf{c}$ is non-degenerate on $V \setminus \lambda_{\Omega}^{-1}(0)$, thus let $\omega_{\mathbf{c}}$ be the volume element induced by the non-degenerate metric $\mathbf{c}$. Let $\omega_{1}$ be the volume element in $\R^3$, so the volume element induced by the equiaffine vector field $\bxi$ is $\theta_{1}(\bv_{1}, \bv_{2}) = \omega_{1}(\bv_{1}, \bv_{2}, \bxi)$, where $\bv_{1}, \bv_{2}$ are tangent vectors. Since $\bxi$ is equiaffine, $\theta_{1}$ is a parallel volume element in the regular part of $\bx$, i.e., $\nabla\theta_{1} = 0$ (see proposition \ref{prop1.4}). The apolarity condition $\nabla \omega_{\mathbf{c}} = 0$ means that $\omega_{\mathbf{c}} = \mu \theta_{1}$, where $\mu$ is a positive constant, since a parallel volume element is unique up to a positive scalar multiple. If we take in $\R^3$ the volume element $\omega = \mu \omega_{1}$ and the new induced volume element $\theta$, we get $\omega_{\mathbf{c}} = \theta$, therefore, $\bxi$ is the usual Blaschke vector field of $\bx$ in the regular part. Since $\bxi$ is defined on $V$, it follows that $\bxi$ is the Blaschke vector field of $\bx$ as defined in \ref{defBlaschke}.

		\item If we write  $\bomega = \begin{pmatrix}
		\w_{1} & \w_{2}
		\end{pmatrix}$ and $\widetilde{\bomega} = \begin{pmatrix}
		\widetilde{\w_{1}} & \widetilde{\w_{2}}
		\end{pmatrix}$, then for each $q \in U$ there is an isomorphism $L_{q}: \R^3 \rightarrow \R^3$, such that
		\begin{align*}
		L_{q}(\w_{i}) &= \widetilde{\w_{i}},\; i=1,2.\\
		L_{q}(\bxi) &= \widetilde{\bxi}.
		\end{align*}
		We seek to show that $L$ is constant. It follows from the fact that $\bx$ e $\widetilde{\bx}$ satisfy the same hypothesis given in item a) that both relative shape operators $S_{\Omega}$ and $\widetilde{S}_{\widetilde{\Omega}}$ are given by the matrix $\begin{pmatrix}
		S_{i}^{j}
		\end{pmatrix}$, hence $L_{q}(S(\w_{i})) = \widetilde{S}(\widetilde{\w_{i}})$. Thus,
		\begin{align*}
		-\widetilde{S}(\widetilde{\w_{i}}) = \dfrac{\partial}{\partial u_{i}}\widetilde{\bxi}
		=  \dfrac{\partial}{\partial u_{i}}L(\bxi) = \left(\dfrac{\partial}{\partial u_{i}}L\right)(\bxi) + L\left(\dfrac{\partial}{\partial u_{i}}\bxi\right)
		&= \left(\dfrac{\partial}{\partial u_{i}}L\right)(\bxi) - L(S(\w_{i}))\\
		&= \left(\dfrac{\partial}{\partial u_{i}}L\right)(\bxi) - \widetilde{S}(\widetilde{\w_{i}}),
		\end{align*} 
which means that $ \left(\dfrac{\partial}{\partial u_{i}}L\right)(\bxi) = 0$, $i=1,2$. Furthermore,
		\begin{align*}
		\widetilde{\w_{1}}_{u_{1}} = \dfrac{\partial}{\partial u_{1}}L(\w_{1})
		&= \left(\dfrac{\partial}{\partial u_{1}}L\right)(\w_{1}) + L(\w_{1u_1})\\
		&= \left(\dfrac{\partial}{\partial u_{1}}L\right)(\w_{1}) + L(\cD^{1}_{11} \w_{1} + \cD^{2}_{11}  \w_{2} + 
		h_{11}\bxi)\\
		&=  \left(\dfrac{\partial}{\partial u_{1}}L\right)(\w_{1}) + \widetilde{\w_{1}}_{u_{1}}, 
		\end{align*}
		so, $\left(\dfrac{\partial}{\partial u_{1}}L\right)(\w_{1}) = 0$. Analogously, we show that $\left(\dfrac{\partial}{\partial u_{i}}L\right)(\w_{j}) = 0$, $i,j=1,2$. Therefore, $L$ is constant and from $\widetilde{\bomega} = L \bomega$ we obtain
		\begin{align*}
		D\widetilde{\bx} = \widetilde{\bomega} \bLambda^{T} = L \bomega \bLambda^{T} = LD\bx, 
		\end{align*}
		that is, $\widetilde{\bx} = L\bx + \mathbf{a}$, where $\mathbf{a}$ is a constant vector and $L: \R^3 \rightarrow \R^3$ is a linear isomorphism.

	\end{enumerate}
\end{proof}	

\begin{obs}\normalfont
The approach used to prove item (a) in theorem \ref{teofundamental} is the same as that applied to prove the existence part of the fundamental theorem presented in \cite{alexandro2019fundamental}. Then, as we are considering here any equiaffine transversal vector field, it is possible to recover from item (a) the existence theorem presented in \cite{alexandro2019fundamental}, taking the unit normal as the equiaffine vector field.
\end{obs}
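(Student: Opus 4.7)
The plan is to show that Theorem \ref{teofundamental}(a), when specialized to $\bxi = \bn$, reproduces the existence part of the fundamental theorem for frontals proved in \cite{alexandro2019fundamental}. First, I observe that the unit normal $\bn$ induced by a tmb $\bomega$ is automatically equiaffine: differentiating $\langle \bn, \bn \rangle = 1$ yields $\langle \bn_{u_i}, \bn \rangle = 0$, so each $\bn_{u_i}$ lies in $T_\Omega$. In the decomposition $\bxi = \phi \bn + a\w_1 + b\w_2$ this corresponds to $\phi \equiv 1$ and $a \equiv b \equiv 0$, which I take as the distinguished choice throughout.

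With this substitution the hypotheses of Theorem \ref{teofundamental}(a) simplify. Relation (\ref{relacaoh}) becomes $(h_{ij}) = \bII_\Omega$, the relations (\ref{propext1})--(\ref{propext2}) collapse to $\widetilde{\Gamma}_i = \Gamma_i$ (the Christoffel symbols of $\bI$), and (\ref{eqaux12}) becomes the usual Weingarten relation tying $S_\Omega$ to the shape operator of $\bn$. So the prescribed abstract data reduces to the triple $(\bI_\Omega, \bII_\Omega, \bLambda)$ together with the tmb compatibility symbols $\cT_1, \cT_2$, which is precisely the data appearing in \cite{alexandro2019fundamental}. Moreover, via Lemma \ref{lema5.2}, the integrability condition $\bD_{1u_2} - \bD_{2u_1} + [\bD_1, \bD_2] = 0$ is equivalent to $\mathbf{\widetilde{\Gamma}}_{1u_2} - \mathbf{\widetilde{\Gamma}}_{2u_1} + [\mathbf{\widetilde{\Gamma}}_1, \mathbf{\widetilde{\Gamma}}_2] = 0$, and under the present specialization this unpacks into the classical frontal Gauss and Codazzi-Mainardi equations. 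The extendability conditions (\ref{cli11})--(\ref{cli22}) depend only on the tmb and on $\bI_\Omega$, hence match verbatim.

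Applying Theorem \ref{teofundamental}(a) produces a local frontal $\bx: V \to \R^3$ with tmb $\bomega$ satisfying $D\bx = \bomega \bLambda^T$ together with an equiaffine transversal vector field $\bxi$. The remaining step is to identify this $\bxi$ with the unit normal of $\bomega$. In the proof of (a) the initial condition $\mathbf{W}(q) = (\bv_1, \bv_2, \bv_3)$ is free; by choosing $\bv_1, \bv_2$ consistent with $\bI_\Omega(q)$ and $\bv_3 = (\bv_1 \times \bv_2)/\lVert \bv_1 \times \bv_2 \rVert$, the uniqueness of solutions of the transport system (\ref{sistema54}), together with the specialization $\phi = 1$, $a = b = 0$, forces $\bxi(u) = \bn(u)$ throughout $V$. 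This delivers precisely the conclusion of the existence theorem of \cite{alexandro2019fundamental}.

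The main obstacle will be this last identification step: one has to verify that $\langle \bxi, \w_i \rangle = 0$ and $\lVert \bxi \rVert = 1$ are preserved by the PDE system starting from the prescribed initial condition. Equivalently, one shows that $\tfrac{d}{du_i}\langle \bxi, \w_j \rangle$ and $\tfrac{d}{du_i}\lVert \bxi \rVert^2$ vanish along solutions; this uses the block $-S_i^j$ in $\bD_i$ together with the vanishing of the $\bxi$-component of $\bxi_{u_i}$ (i.e.\ $\tau_\Omega \equiv 0$) and the identification of $(h_{ij})$ with the coefficients of $\bII_\Omega$. Once this invariance is established, uniqueness of the initial value problem forces $\bxi \equiv \bn$ on $V$, and the conclusion of Theorem \ref{teofundamental}(a) becomes exactly the existence statement of \cite{alexandro2019fundamental}.
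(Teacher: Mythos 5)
Your proposal is correct in outline, and it actually does more work than the paper, which states this remark without any argument: the paper's implicit justification is simply that the construction in item (a) of theorem \ref{teofundamental} is a word-for-word generalization of the one in \cite{alexandro2019fundamental}, so that setting $\phi\equiv 1$, $a\equiv b\equiv 0$ collapses $(h_{ij})$ to $\bII_{\Omega}$ via (\ref{relacaoh}), $\cD_{i}$ to $\cT_{i}$ via (\ref{expd1}), and $\widetilde{\Gamma}_{i}$ to $\Gamma_{i}$. You correctly isolate the one genuinely nontrivial point, namely that the third column $\bxi$ of the solution $\mathbf{W}$ of (\ref{sistema54}) must be identified a posteriori with the unit normal of the constructed tmb, and your strategy of propagating algebraic identities by uniqueness of the initial value problem is the right one.

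The one place where your argument as written does not close is the list of conserved quantities. The derivative $\tfrac{\partial}{\partial u_{i}}\langle\bxi,\w_{j}\rangle$ equals $-S_{i}^{k}\langle\w_{k},\w_{j}\rangle+h_{ji}\lVert\bxi\rVert^{2}+\cD_{ji}^{k}\langle\bxi,\w_{k}\rangle$, and this vanishes only if, in addition to $\langle\bxi,\w_{k}\rangle=0$ and $\lVert\bxi\rVert^{2}=1$, one also knows $\langle\w_{k},\w_{j}\rangle=(\bI_{\Omega})_{kj}$, so that the Weingarten relation (\ref{eqaux12}) gives $S_{i}^{k}(\bI_{\Omega})_{kj}=h_{ji}$. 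So the quantities $\langle\w_{k},\w_{l}\rangle-(\bI_{\Omega})_{kl}$ must be adjoined to your system of invariants; their derivatives are again linear in the full collection of invariants because, with $\tilde a=\tilde b=0$, the extension constructed in proposition \ref{propextensao} satisfies $\cD_{i}\bI_{\Omega}+\bI_{\Omega}\cD_{i}^{T}=\bI_{\Omega u_{i}}$ (the correction term there is skew-symmetric). With this addition the linear ODE argument closes, the initial condition adapted to $\bI_{\Omega}(q)$ forces $\bxi\equiv\bn$ and $\bomega^{T}\bomega=\bI_{\Omega}$ on $V$, and one recovers the full conclusion of the existence theorem of \cite{alexandro2019fundamental}, which prescribes the first fundamental form as well. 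This is a routine repair, not a flaw in the approach.
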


{\footnotesize \bibliographystyle{siam}
	\bibliography{Equiaffine_Structure_for_frontals}}
\end{document}